%BeginFileInfo
%%Publisher=ARXIV
%%Project=AOP
%%Manuscript=AOP896
%%Stage=
%%TID=linak
%%Format=latex
%%Distribution=arXiv
%%Destination=PDF
%%PDF type=
%%PDF.Maker=arXiv_tex_pdf
%EndFileInfo
%
% Institute of Mathematical Statistics (IMI)
% Journal "The Annals of Probabability"

%secthm,,secfloat,nameyear,number,noautosecdot
\documentclass[aop,MSNbibl,seceqn,dvips]{arximspdf}

% settings
%

% article settings
\doi{10.1214/13-AOP896} %kopijuoti is PTS
\volume{43}
\issue{1}
\pubyear{2015}
\firstpage{188}
\lastpage{239}

\makeatletter

\newcommand{\rrVert}{\Vert}
\newcommand{\rrvert}{\vert}
\newcommand{\llVert}{\Vert}
\newcommand{\llvert}{\vert}
\newcommand{\eqref}[1]{(\ref{#1})}
\newtheorem{theorem}{Theorem}[section]

\newtheorem{condition}{Condition}

\newtheorem{corollary}[theorem]{Corollary}

\newproclaim{definition}[theorem]{Definition}
\newproclaim{example}[theorem]{Example}

\newtheorem{lemma}[theorem]{Lemma}
\newproclaim{notation}{Notation}
\newproclaim{problem}[theorem]{Problem}
\newtheorem{proposition}[theorem]{Proposition}
\newproclaim{remark}[theorem]{Remark}

\def\Var{\operatorname{Var}}
\def\cov{\operatorname{Cov}}
\def\CD{\mathcal{D}}
\def\CC{\mathrm{CC}}
\def\opn{\mathrm{op}}
\makeatother

\begin{document}
\begin{frontmatter}

\title{Smoothness of the density for solutions to Gaussian rough
differential equations}
\runtitle{Smoothness of Gaussian RDEs}

\begin{aug}
\author[A]{\fnms{Thomas} \snm{Cass}\corref{}\ead[label=e1]{thomas.cass@imperial.ac.uk}},
\author[B]{\fnms{Martin} \snm{Hairer}\ead[label=e2]{M.Hairer@Warwick.ac.uk}\thanksref{T1}},
\author[C]{\fnms{Christian} \snm{Litterer}\thanksref{T2}\ead[label=e3]{christian.litterer@polytechnique.edu}}\break
\and
\author[D]{\fnms{Samy} \snm{Tindel}\ead[label=e4]{samy.tindel@univ-lorraine.fr}\thanksref{T3}}
\runauthor{Cass, Hairer, Litterer and Tindel}
\affiliation{Imperial College London, University of Warwick, Imperial
College London\break and
Universit\'e de Lorraine}
\address[A]{T. Cass\\
C. Litterer\\
Department of Mathematics\\
Imperial College London\\
The Huxley Building\\
180 Queensgate, London\\
United Kingdom\\
\printead{e1}} %adresu isvedimo komanda gale!
\address[B]{M. Hairer\\
Mathematics Institute\\
University of Warwick\\
Coventry, CV4 7AL\\
United Kingdom\\
\printead{e2}\hspace*{10pt}}
\address[C]{C. Litterer\\
Centre de Math\'ematiques Appliqu\'ees\\
\'Ecole Polytechnique\\
Route de Sacla\\
91128 Palaiseau\\
France\\
\printead{e3}}
\address[D]{S. Tindel\\
Institut {\'E}lie Cartan Nancy\\
Universit\'e de Lorraine\\
B.P. 239\\
54506 Vand{\oe}uvre-l{\`e}s-Nancy\\
France\\
\printead{e4}}
\thankstext{T1}{Supported by EPSRC Grant EP/D071593/1, a
Wolfson Research Merit award of the Royal Society and a Philip
Leverhulme Prize.}
\thankstext{T2}{Supported by EPSRC Grant EP/H000100/1 and supported in part
by a grant of the
European Research Council (ERC Grant nr. 258237).}
\thankstext{T3}{S. Tindel is member of the BIGS (Biology, Genetics and
Statistics) team at INRIA.}
\end{aug}

% HISTORY:
\received{\smonth{9} \syear{2012}}
\revised{\smonth{10} \syear{2013}}

% ABSTRACT
%
\begin{abstract}
We consider stochastic differential equations of the form
$dY_{t}= V (
Y_{t} ) \,dX_{t}+V_{0} ( Y_{t} ) \,dt$ driven by a
multi-dimensional Gaussian process. Under the assumption that the vector
fields $V_{0}$ and $V= ( V_{1},\ldots,V_{d} ) $ satisfy
H\"{o}rmander's bracket condition, we demonstrate that $Y_{t}$ admits a smooth
density for any $t\in(0,T]$, provided the driving noise satisfies certain
nondegeneracy assumptions. Our analysis relies on relies on an
interplay of
rough path theory, Malliavin calculus and the theory of Gaussian processes.
Our result applies to a broad range of examples including fractional Brownian
motion with Hurst parameter $H>1/4$, the Ornstein--Uhlenbeck process
and the
Brownian bridge returning after time $T$.
\end{abstract}

% KEYWORDS
% Pirmas kwd is didziosios raides
%
\begin{keyword}[class=AMS]
\kwd{60H07}
\kwd{60G15}
\kwd{60H10}
\end{keyword}
\begin{keyword}
\kwd{Rough path analysis}
\kwd{Gaussian processes}
\kwd{Malliavin calculus}
\end{keyword}

\end{frontmatter}

%s1 #&#
\section{Introduction}\label{sec1}

Over the past decade, our understanding of stochastic differential equations
(SDEs) driven by Gaussian processes has evolved considerably. As a natural
counterpart to this development, there is now much interest in investigating
the probabilistic properties of solutions to these equations. Consider
an SDE
of the form%
%
%e1.1 #&#
\begin{equation}
dY_{t}=V(Y_{t})\,dX_{t}+V_{0} (
Y_{t} ) \,dt,\qquad Y ( 0 ) =y_{0}\in%
%TCIMACRO{\U{211d} }%
%BeginExpansion
\mathbb{R} %EndExpansion
^{e}, \label{Int-Eq}%
\end{equation}
driven by an $%
%TCIMACRO{\U{211d} }%
%BeginExpansion
\mathbb{R}
%EndExpansion
^{d}$-valued continuous Gaussian process $X$ along $C_{b}^{\infty}$ vector
fields $V_{0}$ and $V= ( V_{1},\ldots,V_{d} ) $ on $%
%TCIMACRO{\U{211d} }%
%BeginExpansion
\mathbb{R}
%EndExpansion
^{e}$. Once the existence and uniqueness of $Y$ has been settled, it is
natural to ask about the existence of a smooth density of $Y_{t}$ for \mbox{$t>0$}.
In the context of diffusion processes, the theory is classical and goes back
to H\"{o}rmander \cite{horm} for an analytical approach, and Malliavin
\cite{mall} for a probabilistic one.

For the case where $X$ is fractional Brownian motion, this question was first
addressed by Nualart and Hu \cite{NH}, where the authors show the existence
and smoothness of the density when the vector fields are elliptic, and the
driving Gaussian noise is fractional Brownian motion (fBm) for $H>1/2$.
Further progress was achieved in \cite{BH} where, again for the regime
$H>1/2$, the density was shown to be smooth under H\"{o}rmander's celebrated
bracket condition. Rougher noises are not directly amenable to the analysis
put forward in these two papers. Additional ingredients have since gradually
become available with the development of a broader theory of (Gaussian) rough
paths (see \cite{L,CQ,FV}). The papers \cite{CFV} and
\cite{CF} used this technology to establish the existence of a density under
fairly general assumptions on the Gaussian driving noises. These papers,
however, fall short of proving the smoothness of the density, because
the proof
demands far more quantitative estimates than were available at the time.

More recently, decisive progress was made on two aspects which
obstructed the
extension of this earlier work. First, the paper \cite{CLL}
established sharp
tail estimates on the Jacobian of the flow $J_{t\leftarrow0}^{\mathbf{X}
}(y_{0})$ driven by a wide class of (rough) Gaussian processes. The
tail turns
out to decay quickly enough to allow to conclude the finiteness of all moments
for $J_{t\leftarrow0}^{\mathbf{X}}(y_{0})$. Second, \cite{H3}
obtained a
general, deterministic version of the key Norris lemma (see also \cite
{HT} for
some recent work in the context of fractional Brownian motion). The
lemma of
Norris first appeared in~\cite{N} and has been interpreted as a quantitative
version of the Doob--Meyer decomposition. Roughly speaking, it ensures that
there cannot be too many cancellations between martingale and bounded
variation parts of the decomposition. The work~\cite{H3}, however,
shows that
the same phenomenon arises in a purely deterministic setting, provided that
the one-dimensional projections of the driving process are sufficiently and
uniformly rough. This intuition is made precise through a notion of
``modulus of H\"{o}lder roughness;'' see Definition~\ref{def:rough}
below. Together with an analysis of the higher
order Malliavin derivatives of the flow of (\ref{Int-Eq}), also carried
out in
\cite{H3}, these two results yield a H\"{o}rmander-type theorem for fractional
Brownian motion if $H>1/3$.

In this paper, we aim to realise the broader potential of these
developments by
generalising the analysis to a wide class of Gaussian processes. This class
includes fractional Brownian motion with Hurst parameter $H \in({\frac{1}{4}},
{\frac{1}{2}}]$, the Ornstein--Uhlenbeck process, and the Brownian bridge.
Instead of focusing on particular examples of processes, our approach
aims to
develop a general set of conditions on $X$ under which Malliavin--H\"{o}rmander
theory still works.

The probabilistic proof of H\"ormander's theorem is intricate, and hard to
summarise in a few lines; see \cite{MallHor} for a relatively short
exposition. However, let us highlight some basic features of the method in
order to see where our main contributions lie:

\begin{longlist}[(iii)]
\item[(i)] At the centre of the proof of H\"{o}rmander's theorem is a
quantitative estimate on the nondegeneracy of the Malliavin covariance matrix
$C_{T} ( \omega ) $. Our effort in this direction consists
in a
direct and instructive approach, which reveals an additional structure
of the
problem. In particular, the conditional variance of the process plays an
important role, which does not appear to have been noticed so far. More
specifically, following \cite{CFV} we study the Malliavin covariance
matrix as
a 2D Young integral against the covariance function $R ( s,t
) $.
This provides the convenient representation
\[
v^{T}C_{t}(\omega) v=\int_{ [ 0,t ] \times [ 0,t
] }f_{s}
( v;\omega ) f_{r} ( v;\omega ) \,dR ( s,r )
\]
for some $\gamma$-H\"{o}lder continuous $f ( v;\omega ) $, which
avoids any detours via the fractional calculus that are specific to fBm.
Compared to the setting of \cite{CF}, we have to impose some additional
assumptions on $R ( s,t ) $, but our more quantitative approach
allows us in return to relax the zero--one law condition required in
this paper.

\item[(ii)] An essential step in the proof is achieved when one
obtains some
lower bounds on $v^{T}C_{t} v$ in terms of the supremum norm of $f$.
Toward this
aim, we prove a novel interpolation inequality, which lies at the heart
of this
paper. It is explicit and also sharp in the sense that it collapses to a
well-known inequality for the space $L^{2}([ 0,T]) $ in the case of Brownian
motion. Furthermore, this result should be important in other
applications in
the area, for example, in establishing bounds on the density function (see
\cite{BOS} for a first step in this direction) or studying small-time
asymptotic.

\item[(iii)] H\"{o}rmander's theorem also relies on an accurate
analysis and
control of the higher order Malliavin derivatives of the flow
$J_{t\leftarrow
0}^{\mathbf{X}}(y_{0})$. This turns out the be notationally
cumbersome, but
structurally quite similar to the technology already developed for fBm. For
this step, we therefore rely as much as possible on the analysis
performed in
\cite{H3}. The integrability results in \cite{CLL} then play the first
of two
important roles in showing that the flow belongs to the Shigekawa--Sobolev
space $\mathbb{D}^{\infty}(\mathbb{R}^{e})$.

\item[(iv)] Finally, an induction argument that allows to transfer the bounds
from the interpolation inequality to the higher order Lie brackets of the
vector fields has to be set up. This induction requires another integrability
estimate for $J_{t\leftarrow0}^{\mathbf{X}}(y_{0})$, plus a Norris
type lemma
allowing to bound a generic integrand $A$ in terms of the resulting noisy
integral $\int A \,dX$ in the rough path context. This is the content of our
second main contribution, which can be seen as a generalisation of the Norris
lemma from \cite{H3} to a much wider range of regularities and Gaussian
structures for the driving process $X$. Namely, we extend the result of
\cite{H3} from $p$-rough paths with $p<3$ to general $p$ under the same
``modulus of H\"{o}lder roughness'' assumption. It is interesting to
note that
the argument still only requires information about the roughness of the path
itself and not its lift.
\end{longlist}

Let us further comment on the Gaussian assumptions allowing the
derivation of
the interpolation inequality briefly described in step (ii) above.
First, we
need a standing assumption that regards the regularity of $R(s,t)$ (expressed
in terms of its so called 2D $\rho$-variation, see \cite{FV}) and
complementary Young regularity of $X$ and its Cameron--Martin space.
This is a
standard assumption in the theory of Gaussian rough paths. The first
part of
the condition guarantees the existence of a natural lift of the process
to a
rough path. The complementary Young regularity in turn is necessary to perform
Malliavin calculus, and allows us to obtain the integrability estimates for
$J_{t\leftarrow0}^{\mathbf{X}}(y_{0})$ in \cite{CLL}.

In order to understand the assumptions on which our central interpolation
inequality hinges, let us mention that it emerges from the need to
prove lower
bounds of the type
%
%e1.2 #&#
\begin{equation}
\int_{ [ 0,T ] \times [ 0,T ] }f_{s}f_{t} \,dR ( s,t ) \geq
C\llvert f\rrvert _{\gamma; [ 0,T
] }%
^{a}\llvert f\rrvert
_{\infty; [ 0,T ] }^{2-a} \label
{opt}%
\end{equation}
for some exponents $\gamma$ and $a$, and all $\gamma$-H\"{o}lder continuous
functions $f$. After viewing the integral in (\ref{opt}) along a
sequence of
discrete-time approximations to the integral, relation \eqref{opt}
relies on
solving a sequence of finite dimensional partially constrained quadratic
programming (QP) problems. These (QP) problems involve some matrices
$Q$ whose
entries can be written as $Q^{ij}=E[X_{t_{i},t_{i+1}}^{1}
X_{t_{j},t_{j+1}}^{1}]$, where $X_{t_{i},t_{i+1}}^{1}$ denotes the
increment $X_{t_{i+1}}^{1}-X_{t_{i}}^{1}$ of the first component of $X$.
Interestingly enough, some positivity properties of Schur complements computed
within the matrix $Q$ play a prominent role in the resolution of the
aforementioned (QP) problems. In order to guarantee these positivity
properties, we shall make two nondegeneracy type assumptions on the
conditional variance and covariance structure of our underlying process
$X^{1}$ (see Conditions \ref{nondeterm} and \ref{cond dom} below).
This is
quite natural since Schur complements are classically related to
conditional variances in elementary Gaussian analysis. We also believe that
our conditions essentially characterise the class of processes for
which we
can quantify the nondegeneracy of $C_{T}(\omega)$ in terms of the conditional
variance of the process $X$.

The outline of the article is as follows. In Section~\ref{rough
paths}, we
give a short overview of the elements of the theory of rough paths required
for our analysis. Section~\ref{main thm} then states our main result. In
Section~\ref{examples section}, we demonstrate how to verify the
nondegeneracy assumptions required for the driving process in a number of
concrete examples. The remainder of the article is devoted to the proofs.
First, in Section~\ref{Norris lemma section}, we state and prove our general
version of Norris's lemma and we apply it to the class of Gaussian processes
we have in mind.\vadjust{\goodbreak} In Section~\ref{interpol}, we then provide the proof
of an
interpolation inequality of the type (\ref{opt}). In
Section~\ref{differentiability section}, we obtain bounds on the
derivatives of
the solution with respect to its initial condition, as well as on its
Malliavin derivative. Finally, we combine all of these ingredients in
Section~\ref{section proof main theorem} to complete the proof of our
main theorem.

%s2 #&#
\section{Rough paths and Gaussian processes}
\label{rough paths}

In this section, we introduce some basic notation concerning rough paths,
following the exposition in \cite{CLL}. In particular, we recall the
conditions needed to ensure that a given Gaussian process has a natural rough
path lift.

For $N\in\mathbb{N}$, recall that the truncated tensor algebra
$T^{N}(\mathbb{R}%
^{d})$ is defined by $T^{N}(\mathbb{R}^{d})=\bigoplus_{n=0}^{N}(\mathbb
{R}%
^{d})^{\otimes n}$, with the convention $(\mathbb{R}^{d})^{\otimes
0}=\mathbb{R}$. The space $T^{N}(\mathbb{R}^{d})$ is equipped with a
straightforward
vector space structure, plus an operation $\otimes$ defined by
\[
\pi_{n}(g\otimes h)=\sum_{k=0}^{N}
\pi_{n-k}(g)\otimes\pi_{k}(h),\qquad g,h\in T^{N}\bigl(
\mathbb{R}^{d}\bigr),
\]
where $\pi_{n}$ denotes the projection on the $n$th tensor level. Then
$(T^{N}(\mathbb{R}^{d}),+,\otimes)$ is an associative algebra with unit
element $\mathbf{1} \in(\mathbb{R}^{d})^{\otimes0}$.

At its most fundamental, we will study continuous $%
%TCIMACRO{\U{211d} }%
%BeginExpansion
\mathbb{R}
%EndExpansion
^{d}$-valued paths parameterised by time on a compact interval $ [
0,T ] $; we denote the set of such functions by $C([0,T],\mathbb{R}
^{d})$. We write $x_{s,t}:=x_{t}-x_{s}$ as a shorthand for the
increments of a
path. Using this notation, we define
the uniform norm and the $p$-variation semi-norm of a path $x$ by
%
%e2.1 #&#
\begin{equation}
\label{e:defNorms} \qquad\llVert x\rrVert _{\infty}:=\sup_{t\in [ 0,T ] }
\llvert x_{t}\rrvert,\qquad \llVert x\rrVert _{p\mbox{-}\mathrm{var}; [
0,T ] }:= \biggl( \sup
_{\CD}%
\sum_{[s,t] \in\CD}\llvert
x_{s,t}\rrvert ^{p} \biggr) ^{1/p},
\end{equation}
where the supremum in the second term runs over all partitions $\CD$
of $[0,T]$.
We will use the notation $C^{p\mbox{-}\mathrm{var}}( [ 0,T], \mathbb
{R}^{d}) $ for the linear
subspace of $C([0,T],\break \mathbb{R}^{d})$ consisting of the continuous
paths that
have finite $p$-varia\-tion. Of interest will also be the set of
$\gamma
$-H\"{o}lder continuous functions, denoted by $C^{\gamma}([0,T],
\mathbb
{R}%
^{d})$, which consists of functions satisfying
\[
\llVert x\rrVert _{\gamma; [ 0,T ] }:=\sup_{0\leq
s<t\leq
T}
\frac{\llvert  x_{s,t}\rrvert }{\llvert  t-s\rrvert
^{\gamma}%
}<\infty.
\]

For $s<t$ and $n\geq2$, consider the simplex $\Delta_{st}^{n}=\{(u_{1}
,\ldots,u_{n})\in [ s,t]^{n}; u_{1}<\cdots<u_{n}\} $, while the
simplices over $[0,1]$ will simply be denoted by $\Delta^{n}$. A
continuous map
$\mathbf{x}\dvtx \Delta^{2}\rightarrow T^{N}(\mathbb{R}^{d})$ is called a
multiplicative functional if for $s<u<t$ one has $\mathbf{x}_{s,t}%
=\mathbf{x}_{s,u}\otimes\mathbf{x}_{u,t}$. An important example
arises from
considering paths $x$ with finite variation: for $0<s<t$, we set
%
%e2.2 #&#
\begin{equation}
\mathbf{x}_{s,t}^{n}=\sum_{1\leq i_{1},\ldots,i_{n}\leq d}
\biggl( \int%
_{\Delta_{st}^{n}}\,dx^{i_{1}}\cdots dx^{i_{n}}
\biggr) e_{i_{1}}\otimes \cdots\otimes e_{i_{n}},
\label{eq:def-iterated-intg}%
\end{equation}
where $\{e_{1},\ldots,e_{d}\}$ denotes the canonical basis of $\mathbb
{R}^{d}%
$, and then define the \textit{signature} of $x$ as
\[
S_{N}(x)\dvtx \Delta^{2}\rightarrow T^{N}\bigl(
\mathbb{R}^{d}\bigr),\qquad (s,t)\mapsto S_{N}(x)_{s,t}:=1+
\sum_{n=1}^{N}\mathbf{x}_{s,t}^{n}.
\]
$S_{N}(x)$ will be our typical example of multiplicative functional.
Let us
also add the following two remarks:

\begin{longlist}[(ii)]
\item[(i)] A geometric rough path (see Definition~\ref{def:RP} below),
as well as the
signature of any smooth function, takes values in the strict subset
$G^{N}(\mathbb{R}^{d})\subset T^{N}(\mathbb{R}^{d})$ given by the ``group-like
elements''
\[
G^{N}\bigl(\mathbb{R}^{d}\bigr) = \exp^{\otimes}
\bigl(L^{N}\bigl(\mathbb {R}^{d}\bigr) \bigr),
\]
where $L^{N}(\mathbb{R}^{d})$ is the linear span of all elements that
can be
written as a commutator of the type $a \otimes b - b\otimes a$ for two
elements in $T^{N}(\mathbb{R}^{d})$.

\item[(ii)] It is sometimes convenient to think of the indices $w=(i_{1}
,\ldots,i_{n})$ in \eqref{eq:def-iterated-intg} as words based on the alphabet
$\{1,\ldots,d\}$. We shall then write $\mathbf{x}^{w}$ for the iterated
integral $\int_{\Delta_{st}^{n}}\,dx^{i_{1}}\cdots dx^{i_{n}}$.
\end{longlist}

More generally, if $N\geq1$ we can consider the set of such group-valued
paths
\[
\mathbf{x}_{t}= \bigl( 1,\mathbf{x}_{t}^{1},
\ldots,\mathbf {x}_{t}^{N} \bigr) \in G^{N}
\bigl( \mathbb{R}^{d} \bigr).
\]
Note that the group structure provides a natural notion of increment, namely
$\mathbf{x}_{s,t}:=\mathbf{x}_{s}^{-1}\otimes\mathbf{x}_{t}$, and
we can
describe the set of ``norms'' on $G^{N} (
%TCIMACRO{\U{211d} }%
%BeginExpansion
\mathbb{R}
%EndExpansion
^{d} ) $ which are homogeneous with respect to the natural scaling
operation on the tensor algebra (see~\cite{FV} for definitions and details).
One such example is the Carnot--Caratheodory (CC) norm (see \cite
{FV}), which
we denote by $\llVert  \cdot\rrVert  _{\CC}$. The precise norm used is
mostly irrelevant in finite dimensions because they are all equivalent. The
subset of these so-called homogeneous norms which are symmetric and
sub-additive (again, see \cite{FV}) gives rise to genuine metrics on
$G^{N} (
%TCIMACRO{\U{211d} }%
%BeginExpansion
\mathbb{R}
%EndExpansion
^{d} ) $, for example, $d_{\CC}$ in the case of the CC norm. In
turn, these
metrics give rise to a notion of homogenous $p$-variation metrics
$d_{p\mbox{-}\mathrm{var}}$ on the set of $G^{N} (
%TCIMACRO{\U{211d} }%
%BeginExpansion
\mathbb{R}
%EndExpansion
^{d} )$-valued paths. Using the CC norm for definiteness, we will
use the
following homogenous $p$-variation and $\gamma$-H\"{o}lder variation
semi-norms:
%
%e2.3 #&#
\begin{eqnarray}\label{homogeneous norm}
\llVert \mathbf{x}\rrVert _{p\mbox{-}\mathrm{var}; [
s,t ] } &=&\max_{i=1,\ldots, \lfloor p \rfloor
}
\biggl( \sup_{\CD}\sum_{[s,t] \in\CD}
\llVert \mathbf{x}_{s,t}\rrVert _{\CC}^{p} \biggr)
^{1/p},
\nonumber
\\[-8pt]
\\[-8pt]
\nonumber
\llVert \mathbf{x}\rrVert _{\gamma,%
[ s,t ] } &=&\sup_{(u,v)\in\Delta_{st}^{2}}
\frac{\llVert
\mathbf{x}_{u,v}\rrVert  _{\CC}}{\llvert
v-u\rrvert ^{\gamma}},
\nonumber
\end{eqnarray}
where the supremum over $\CD$ is as in \eqref{e:defNorms}.

We will also use some metrics on path spaces which are not homogenous.
The most
important will be the following:%
%
%e2.4 #&#
\begin{equation}
\mathcal{N}_{\mathbf{x,}\gamma;[s,t]}:=\sum_{k=1}^{N}
\sup_{(u,v)\in
\Delta
_{st}^{2}}\frac{|\mathbf{x}_{u,v}^{k}|_{ (
%TCIMACRO{\U{211d} }%
%BeginExpansion
\mathbb{R}
%EndExpansion
^{d} ) ^{\otimes k}}}{|v-u|^{k\gamma}}, \label{inhomogeneous}%
\end{equation}
which will be written simply as $\mathcal{N}_{\mathbf{x,}\gamma}$ when
the interval $ [ s,t ] $ is clear from the context.

%de2.1 #&#
\begin{definition}
\label{def:RP} The space of weakly geometric $p$-rough paths [denoted
$WG\Omega_{p}(\mathbb{R}^{d})$] is the set of paths $\mathbf
{x}\dvtx \Delta
^{2}\rightarrow G^{\lfloor p\rfloor}( \mathbb{R} ^{d}) $ such that
(\ref{homogeneous norm}) is finite.
\end{definition}

We will also work with the space of geometric $p$-rough paths, which we denote
by $G\Omega_{p}( \mathbb{R}^{d}) $, defined as the $d_{p\mbox{-}\mathrm{var}}%
$-closure of
\[
\bigl\{ S_{ \lfloor p \rfloor} ( x ) \dvtx x\in C^{1\mbox{-}\mathrm{var}} \bigl( [ 0,T ]
,%
%TCIMACRO{\U{211d} }%
%BeginExpansion
\mathbb{R} %EndExpansion
^{d} \bigr)
\bigr\}.
\]
Analogously, if $\gamma>0$ and $N=[1/\gamma]$ we define $C^{0,\gamma
}([0,T];G^{N}(\mathbb{R}^{d}))$ to be the linear subspace of $G\Omega_{N}(
\mathbb{R}^{d}) $ consisting of paths $\mathbf{x}\dvtx \Delta
^{2}\rightarrow
G^{N}(\mathbb{R}^{d})$ such that
\[
\lim_{n\rightarrow\infty}\bigl\Vert\mathbf{x}-S_{N}(x_{n})
\bigr\Vert _{\gamma
;[0,T]}=0
\]
for some sequence $ ( x_{n} ) _{n=1}^{\infty}\subset
C^{\infty
}([0,T];\mathbb{R}^{d})$.

In the following, we will consider RDEs driven by paths $\mathbf{x}$ in
$WG\Omega_{p}(\mathbb{R}^{d}) $, along a collection of vector fields
$V= ( V_{1},\ldots,V_{d} ) $ on $\mathbb{R}^{e}$, as well
as a
deterministic drift along $V_{0}$. From the point of view of existence and
uniqueness results, the appropriate way to measure the regularity of the
$V_{i}$'s turns out to be the notion of Lipschitz-$\gamma$ (short:
Lip-$\gamma$) in the sense of Stein \cite{FV,LCL}. This
notion provides a norm on the space of such vector fields (the
Lip-$\gamma$
norm), which we denote $\llvert \cdot\rrvert _{\mathrm{Lip}\mbox{-}\gamma}$. For the collection $V$ of vector fields, we will often make
use of
the shorthand
\[
\llvert V\rrvert _{\mathrm{Lip}\mbox{-}\gamma}=\max_{i=1,\ldots
,d}\llvert
V_{i}\rrvert _{\mathrm{Lip}\mbox{-}\gamma},
\]
and refer to the quantity $\llvert  V\rrvert _{\mathrm{Lip}\mbox{-}\gamma}$ as the Lip-$\gamma$ norm of $V$.

A theory of such Gaussian rough paths has been developed by a
succession of
authors \cite{CQ,FV07,CFV,FO10} and we will mostly work within their
framework. To be more precise, we will assume that $X_{t}= ( X_{t}%
^{1},\ldots,X_{t}^{d} ) $ is a continuous, centred (i.e., mean zero)
Gaussian process with i.i.d. components on a complete probability space
$ ( \Omega,\mathcal{F},P ) $. Let $\mathcal{W}=C([ 0,T],
\mathbb{R}^{d})$ and suppose that $ ( \mathcal{W},\mathcal
{H},\mu
)
$ is the abstract Wiener space associated with $X$. The function
$R\dvtx [
0,T ] \times [ 0,T ] \rightarrow%
%TCIMACRO{\U{211d} }%
%BeginExpansion
\mathbb{R}
%EndExpansion
$ will denote the covariance function of any component of $X$, that is,
\[
R(s,t) =E \bigl[ X_{s}^{1}X_{t}^{1}
\bigr].
\]
Following \cite{FV07}, we recall some basic assumptions on the covariance
function of a Gaussian process which are sufficient to guarantee the existence
of a natural lift of a Gaussian rough process to a rough path. We
recall the
notion of \textit{rectangular increments} of $R$ from \cite
{FVupdate}; these
are defined by
\[
R\pmatrix{
s,t
\vspace*{2pt}\cr
u,v }:=E \bigl[ \bigl( X_{t}^{1}-X_{s}^{1}
\bigr) \bigl( X_{v}%
^{1}-X_{u}^{1}
\bigr) \bigr].
\]
The existence of a lift for $X$ is ensured by insisting on a sufficient rate
of decay for the correlation of the increments. This is captured, in a very
general way, by the following two-dimensional $\rho$-variation
constraint on
the covariance function.

%de2.2 #&#
\begin{definition}
\label{rho var}Given $1\leq\rho<2$, we say that $R$ has \textit{finite
(two-dimen\-sional) }$\rho$\textit{-variation} if%
%
%e2.5 #&#
\begin{equation}
V_{\rho} \bigl( R; [ 0,T ] \times [ 0,T ] \bigr) ^{\rho}:=\sup
_{\CD,\CD'}\mathop{\sum_{[s,t] \in\CD}}_{[s',t'] \in\CD
'}\biggl\llvert R
\pmatrix{
s,t
\vspace*{2pt}\cr
s',t'}
\biggr\rrvert ^{\rho}<\infty. \label{2dvariation}%
\end{equation}
\end{definition}

If a process has a covariance function with finite $\rho$-variation for
$\rho\in [1,2)$ in the sense of Definition~\ref{rho var},
\cite{FV07}, Theorem~35, asserts that $ ( X_{t} ) _{t\in [ 0,T ] }
$ lifts
to a geometric $p$-rough path provided $p>2\rho$. Moreover, there is a unique
\textit{natural lift} which is the limit, in the $d_{p\mbox{-}\mathrm{var}}$-induced topology, of the canonical lift of piecewise linear approximations
to $X$.

A related take on this notion is obtained by enlarging the set of partitions
of $ [ 0,T ] ^{2}$ over which the supremum is taken in
(\ref{2dvariation}). Recall from \cite{FVupdate} that a \textit{rectangular
partition }of the square $ [ 0,T ] ^{2}$ is a collection
$
\{
A_{i}\dvtx i\in I \} $ of rectangles of the form $A_{i}= [ s_{i}%
,t_{i} ] \times [ u_{i},v_{i} ] $, whose union equals
$ [ 0,T ] ^{2}$ and which have pairwise disjoint interiors. The
collection of rectangular partitions is denoted $\mathcal{P}_{\mathrm{rec}} (  [ 0,T ] ^{2} ) $, and $R$ is said to have
\textit{controlled }$\rho$\textit{-variation} if%
%
%e2.6 #&#
\begin{equation}
\llvert R\rrvert _{\rho\mbox{-}\mathrm{var}; [ 0,T ]
^{2}%
}^{\rho}:=\mathop{\sup
_{ \{ A_{i}\dvtx i\in I \} \in\mathcal
{P}%
_{\mathrm{rec}} (  [ 0,T ] ^{2} ) }}_{A_{i}= [
s_{i},t_{i} ] \times [ u_{i},v_{i} ] }\sum_{i,j}
\biggl\llvert R\pmatrix{
s_{i},t_{i}
\vspace*{2pt}\cr
u_{i},v_{i}} \biggr\rrvert ^{\rho}<\infty. \label{controlled}%
\end{equation}
We obviously have $V_{\rho}( R; [ 0,T ] ^{2}) \leq\llvert
R\rrvert _{\rho\mbox{-}\mathrm{var}; [ 0,T ] ^{2}}$, and
it is
shown in \cite{FVupdate} that for every $\varepsilon>0$ there exists
$c_{p,\varepsilon}$ such that $\llvert  R\rrvert _{\rho\mbox{-}\mathrm{var}; [
0,T ] ^{2}} \leq c_{p,\varepsilon} V_{\rho+\varepsilon}( R; [
0,T ] ^{2})$. The main advantage of the quantity~(\ref{controlled})
compared to (\ref{2dvariation}) is that the map
\[
[ s,t ] \times [ u,v ] \mapsto\llvert R\rrvert _{\rho\mbox{-}\mathrm{var}; [ s,t ] \times [ u,v ]
}^{\rho}%
\]
is a 2D control in the sense of \cite{FVupdate}.

%de2.3 #&#
\begin{definition}
\label{hol rho var}Given $1\leq\rho<2$, we say that $R$ has \textit{finite
(two-dimen\-sional) H\"{o}lder-controlled }$\rho$\textit{-variation} if
$V_{\rho} ( R; [ 0,T ] \times [ 0,T ]
)
<\infty$, and if there exists $C>0$ such that for all $0\leq s\leq t
\leq T$
we have%
%
%e2.7 #&#
\begin{equation}
V_{\rho} \bigl( R; [ s,t ] \times [ s,t ] \bigr) \leq C ( t-s )
^{1/\rho}.
\end{equation}
\end{definition}

%re2.4 #&#
\begin{remark}
\label{remark f1} This is (essentially) without loss of generality
compared to
Definition \ref{rho var}. To see this, we note that if $R$ also has controlled
$\rho$-variation in the sense of (\ref{controlled}), then we can
introduce the
deterministic time-change $\tau\dvtx [0,T]\rightarrow[0,T]$ given by $\tau
=\sigma^{-1}$, where $\sigma\dvtx  [ 0,T ] \rightarrow [
0,T ] $ is the strictly increasing function defined by
%
%e2.8 #&#
\begin{equation}
\sigma ( t ):=\frac{T\llvert  R\rrvert _{\rho
\mbox{-}\mathrm{var}; [ 0,t ] ^{2}}^{\rho}}{\llvert
R\rrvert
_{\rho\mbox{-}\mathrm{var}; [ 0,T ] ^{2}}^{\rho}}. \label{parametrisation}%
\end{equation}
It is then easy to see that $\tilde{R}$, the covariance function of
$\tilde
{X}=X\circ\tau$, is H\"{o}lder-controlled in the sense of Definition
\ref{hol rho var}.
\end{remark}

Two important consequences of assuming that $R$ has finite
H\"{o}lder-cont\-rolled $\rho$-variation are: (i) $\mathbf{X}$ has
$1/p$-H\"{o}lder sample paths for every $p>2\rho$, and (ii) by using
\cite{FV}, Theorem~15.33, we can deduce that
%
%e2.9 #&#
\begin{equation}
E \bigl[ \exp \bigl( \eta\| \mathbf{X}\| _{1/p; [
0,T ]
}^{2} \bigr)
\bigr] <\infty\qquad\mbox{for some } \eta>0, \label{gauss}%
\end{equation}
that is, $\mathbf{\|X\|}_{1/p; [ 0,T ] }^{2}$ has a
Gaussian tail.

The mere existence of this lift is unfortunately not sufficient to
apply the
usual concepts of Malliavin calculus. In addition, it will be important to
require a complementary (Young) regularity of the sample paths of $X$
and the
elements of its Cameron--Martin space. The following assumption
captures both
of these requirements.

%co1 #&#
\begin{condition}
\label{standing assumption} Let $ ( X_{t} ) _{t\in [
0,T ] }= ( X_{t}^{1},\ldots,X_{t}^{d} ) _{t\in [
0,T ] } $ be a Gaussian process with i.i.d. components. Suppose that
the covariance function has finite H\"{o}lder-controlled $\rho
$-variation for
some $\rho\in [1,2)$. We will assume that $X$ has a natural lift
to a
geometric p-rough path and that $\mathcal{H}$, the Cameron--Martin space
associated with $X$, has Young-complementary regularity to $X$ in the
following sense: for some $q\geq1$ satisfying $1/p+1/q>1$, we have the
continuous embedding
\[
\mathcal{H}\hookrightarrow C^{q\mbox{-}\mathrm{var}} \bigl( [ 0,T ],%
%TCIMACRO{\U{211d} }%
%BeginExpansion
\mathbb{R} %EndExpansion
^{d} \bigr).
\]
\end{condition}

The following theorem appears in \cite{FV07} as Proposition 17 (cf.
also the
recent note~\cite{FVupdate}); it shows how the assumption $V_{\rho
} ( R; [ 0,T ] ^{2} ) <\infty$ allows us to embed
$\mathcal{H}$ in the space of continuous paths with finite $\rho$ variation.
The result is stated in~\cite{FV07} for one-dimensional Gaussian processes,
but the generalisation to arbitrary finite dimensions is straightforward.

%th2.5 #&#
\begin{theorem}[({\cite{FV07}})]\label{CM_pVar_embedding}Let $ ( X_{t} )
_{t\in
[
0,T ] }= ( X_{t}^{1},\ldots,X_{t}^{d} ) _{t\in [
0,T ] }$ be a mean-zero Gaussian process with independent and
identically distributed components. Let $R$ denote the covariance
function of
(any) one of the components. Then if $R$ is of finite $\rho$-variation for
some $\rho\in [1,2)$ we can embed $\mathcal{H}$ in the space
$C^{\rho\mbox{-}\mathrm{var}} (  [ 0,T ],%
%TCIMACRO{\U{211d} }%
%BeginExpansion
\mathbb{R}
%EndExpansion
^{d} ) $, in fact,%
%
%e2.10 #&#
\begin{equation}
\llvert h\rrvert _{\mathcal{H}}\geq\frac{\llvert
h\rrvert
_{\rho\mbox{-}\mathrm{var}; [ 0,T ] }}{\sqrt{V_{\rho} (
R; [ 0,T ] \times [ 0,T ]  ) }}. \label
{embedding}%
\end{equation}
\end{theorem}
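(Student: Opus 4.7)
The plan is to exploit the reproducing-kernel structure of $\mathcal{H}$ and close a self-improving estimate with H\"older's inequality. I begin with the scalar case ($d=1$); the vector-valued extension is immediate because the i.i.d.\ components make $\mathcal{H}$ an orthogonal direct sum of scalar Cameron--Martin spaces, and the argument below transports unchanged.

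First I record the reproducing-kernel identity: identifying $\mathcal{H}$ with the first Wiener chaos of $X$ via $X_t \mapsto R(\cdot,t)$, every $h \in \mathcal{H}$ is continuous on $[0,T]$ and, for $0\le s<t\le T$,
\[
h(t)-h(s) = \langle h, R_{s,t}\rangle_{\mathcal{H}},\qquad \langle R_{s,t}, R_{u,v}\rangle_{\mathcal{H}} = R\bigl(\begin{smallmatrix}s,t\\ u,v\end{smallmatrix}\bigr),
\]
where $R_{s,t}(\cdot):=R(\cdot,t)-R(\cdot,s) \in \mathcal{H}$.

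Now I fix a partition $D=(t_j)$ of $[0,T]$ and write $S_D:=\sum_j|h_{t_j,t_{j+1}}|^\rho$. The key step is to introduce the asymmetric weights $\alpha_j:=|h_{t_j,t_{j+1}}|^{\rho-1}$ together with signs $\epsilon_j:=\operatorname{sgn}(h_{t_j,t_{j+1}})$, rewrite $S_D$ as the single $\mathcal{H}$-inner product $\bigl\langle h, \sum_j \epsilon_j\alpha_j R_{t_j,t_{j+1}}\bigr\rangle_{\mathcal{H}}$, and apply Cauchy--Schwarz to factor out $|h|_{\mathcal{H}}$. The remaining double sum $\sum_{i,j}\epsilon_i\epsilon_j\alpha_i\alpha_j R\bigl(\begin{smallmatrix}t_i,t_{i+1}\\ t_j,t_{j+1}\end{smallmatrix}\bigr)$ I then estimate with H\"older using conjugate exponents $\rho/(\rho-1)$ and $\rho$: the weight factor gives $\sum_{i,j}(\alpha_i\alpha_j)^{\rho/(\rho-1)}=S_D^2$, while the rectangular-increment factor is controlled by $V_\rho(R;[0,T]\times[0,T])^\rho$ from Definition~\ref{rho var}. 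This delivers an inequality of the form $S_D \le |h|_{\mathcal{H}}\,S_D^{(\rho-1)/\rho}\sqrt{V_\rho}$, which rearranges to $S_D^{1/\rho}\le |h|_{\mathcal{H}}\sqrt{V_\rho}$; taking the supremum over $D$ completes the scalar proof.

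The main conceptual point, and the only place where anything nontrivial happens, is the asymmetric weight choice. A direct Cauchy--Schwarz with $\alpha_j\equiv1$ would only pair $|h|_{\rho\text{-var}}$-type quantities against $V_{\rho/2}$-type quantities, and the latter need not be finite under the hypotheses of the theorem. The weighting $\alpha_j=|h_{t_j,t_{j+1}}|^{\rho-1}$ is the Lagrange/duality choice that makes the H\"older exponents on the $R$-side equal to $\rho$ so that exactly $V_\rho$ appears, and it leaves behind a higher power of $S_D$ on the right that is absorbed in the bootstrap. The $\mathbb{R}^d$-valued case is obtained by choosing unit vectors $\xi_j\in\mathbb{R}^d$ with $\xi_j\cdot h_{t_j,t_{j+1}}=|h_{t_j,t_{j+1}}|$ and repeating the argument with the pairing $\sum_j\alpha_j\langle h,\xi_j\otimes R_{t_j,t_{j+1}}\rangle_{\mathcal{H}}$; since the components are i.i.d., $\|\xi_j\otimes R_{s,t}\|^2_{\mathcal{H}}=|\xi_j|^2 R\bigl(\begin{smallmatrix}s,t\\ s,t\end{smallmatrix}\bigr)$ and $|\xi_i\cdot\xi_j|\le1$, so the estimate on the cross-terms is unchanged and the constant in \eqref{embedding} is preserved.
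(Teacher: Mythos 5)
Your proof is correct, and it is essentially the standard argument for this embedding: since the paper itself gives no proof but cites \cite{FV07}, the relevant comparison is with that reference, where the same duality trick (weights $|h_{t_j,t_{j+1}}|^{\rho-1}$, Cauchy--Schwarz in the Cameron--Martin/first-chaos pairing, then H\"older with exponents $\rho/(\rho-1)$ and $\rho$ against the rectangular increments of $R$, followed by absorbing $S_D^{(\rho-1)/\rho}$) is used, merely phrased via $h_{s,t}=E[ZX_{s,t}]$ rather than your RKHS inner product. Your treatment of the $d$-dimensional case via the unit vectors $\xi_j$ is the straightforward extension the paper alludes to, so no further comment is needed.
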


%re2.6 #&#
\begin{remark}[{(\cite{FV3})}]\label{fBM embedding} Writing $\mathcal{H}^{H}$ for the
Cameron--Martin space of fBm for $H$ in $ ( 1/4,1/2 ) $, the
variation embedding in \cite{FV3} gives the stronger result that
\[
\mathcal{H}^{H}\hookrightarrow C^{q\mbox{-}\mathrm{var}} \bigl( [ 0,T ]
,%
%TCIMACRO{\U{211d} }%
%BeginExpansion
\mathbb{R} %EndExpansion
^{d} \bigr)
\qquad\mbox{for any } q> ( H+1/2 ) ^{-1}.
\]
\end{remark}

Theorem $\ref{CM_pVar_embedding}$ and Remark \ref{fBM embedding} provide
sufficient conditions for a process to satisfy the fundamental Condition
\ref{standing assumption}, which we summarise in the following remark.

%re2.7 #&#
\begin{remark}
\label{remark f2} As already observed, the requirement that $R$ has finite
2D $\rho$-variation, for some $\rho\in [1,2)$, implies both
that $X$
lifts to a geometric $p$-rough path for all $p>2\rho$ and also that
$\mathcal{H} \hookrightarrow C^{\rho\mbox{-}\mathrm{var}} (  [
0,T ],%
%TCIMACRO{\U{211d} }%
%BeginExpansion
\mathbb{R}
%EndExpansion
^{d} ) $ (Theorem \ref{CM_pVar_embedding}). Complementary
regularity of
$\mathcal{H}$ in the above condition thus can be obtained by $\rho\in
 [1,3/2)$, which covers for example BM, the OU process and the Brownian
bridge (in each case with $\rho=1$). When $X$ is fBm, we know that $X$
admit a
lift to $G\Omega_{p} (
%TCIMACRO{\U{211d} }%
%BeginExpansion
\mathbb{R}
%EndExpansion
^{d} ) $ if $p>1/H$, and Remark \ref{fBM embedding} therefore ensures
the complementary regularity of $X$ and $\mathcal{H}$ if $H>1/4$.
\end{remark}

%s3 #&#
\section{Statement of the main theorem}

\label{main thm}

We will begin the section by laying out and providing motivation for the
assumptions we impose on the driving Gaussian signal $X$. We will then
end the
section with a statement of the central theorem of this paper, which is a
version of H\"{o}rmander's theorem for Gaussian RDEs. First, we give some
notation which will feature repeatedly.

%no1 #&#
\begin{notation}
We define
\[
\mathcal{F}_{a,b}:=\sigma \bigl( X_{v,v^{\prime}}\dvtx a\leq v\leq
v^{\prime
}\leq b \bigr)
\]
to be the $\sigma$-algebra generated by the increments of $X$ between times
$a$ and~$b$.
\end{notation}

The following condition aims to capture the nondegeneracy of $X$, it will
feature prominently in the sequel.

%co2 #&#
\begin{condition}[(Nondeterminism-type condition)]\label{nondeterm}Let $ (
X_{t} )
_{t\in [ 0,T ] }$ be a continuous Gaussian process. Suppose that
the covariance function $R$ of $X$ has finite H\"{o}lder-controlled
$\rho
$-variation for some $\rho$ in $[1,2)$. We assume that there exists
$\alpha>0$
such that
%
%e3.1 #&#
\begin{equation}
\inf_{0\leq s<t\leq T}\frac{1}{ ( t-s ) ^{\alpha}}%
\operatorname{Var}
( X_{s,t}|\mathcal{F}_{0,s}\vee\mathcal {F}%
_{t,T} ) >0. \label{index}%
\end{equation}
Whenever this condition is satisfied, we will call $\alpha$ the
\emph{index
of nondeterminism} if it is the smallest value of $\alpha$ for which
(\ref{index}) is true.
\end{condition}

%re3.1 #&#
\begin{remark}
It is worthwhile making a number of comments. First, notice that the
conditional variance
\[
\operatorname{Var} ( X_{s,t}|\mathcal{F}_{0,s}\vee
\mathcal {F}%
_{t,T} )
\]
is actually deterministic by Gaussian considerations. Then for any
$ [
s,t ] \subseteq [ 0,S ] \subseteq [ 0,T ]
$, the
law of total variance can be used to show that
\[
\operatorname{Var} ( X_{s,t}|\mathcal{F}_{0,s}\vee
\mathcal {F}%
_{t,S} ) \geq\operatorname{Var} (
X_{s,t}|\mathcal {F}_{0,s}%
\vee
\mathcal{F}_{t,T} ).
\]
It follows that if (\ref{index}) holds on $ [ 0,T ] $, then
it will
also hold on any interval $ [ 0,S ] \subseteq [
0,T ] $
provided $S>0$.
\end{remark}

Note that Condition \ref{nondeterm} implies the existence of $c>0$
such that
\[
\operatorname{Var} ( X_{s,t}|\mathcal{F}_{0,s}\vee
\mathcal {F}%
_{t,T} ) \geq c ( t-s ) ^{\alpha}.
\]
This is reminiscent of (but not equivalent to) other notions of
nondeterminism which have been studied in the literature. For example, it
should be compared to the similar notion introduced in \cite{berman},
where it
was exploited to show the existence of a smooth local time function
(see also
the subsequent work of Cuzick et al.~\cite{cuzick2} and~\cite{cuz}).
In the
present context, Condition \ref{nondeterm} is also related to the following
condition: for any $f$ of finite $p$-variation over $ [ 0,T
] $
%
%e3.2 #&#
\begin{equation}
\int_{0}^{T}f_{s}\,dh_{s}=0\qquad
\forall h\in\mathcal{H } \quad\Rightarrow\quad f=0\qquad \mbox{a.e. on $ [ 0,T ] $.}
\label{CFV nondeg}%
\end{equation}
This has been used in \cite{CF} to prove the existence of the density for
Gaussian RDEs. In some sense, our Condition \ref{nondeterm} is the
quantitative version of (\ref{CFV nondeg}). In this paper, when we
speak of a
nondegenerate Gaussian process $ ( X_{t} ) _{t\in [
0,T ] }$ we will mean the following.

%de3.2 #&#
\begin{definition}
\label{definition nondegeneracy}Let $ ( X_{t} ) _{t\in
[
0,T ] }$ be a continuous, real-valued Gaussian process. For any
partition $D= \{
t_{i}\dvtx i=0,1,\ldots,n \}$ of $ [ 0,T ]$, let
$(Q_{ij}^{D})_{1\leq i,j\leq n}$ denote the $n\times n$ matrix given by the
covariance matrix of the increments of $X$ along $D$, that is,%
%
%e3.3 #&#
\begin{equation}
Q_{ij}^{D}=R\pmatrix{
t_{i-1},t_{i}
\vspace*{2pt}\cr
t_{j-1},t_{j}}. \label{increment matrix}%
\end{equation}
We say that $X$ is nondegenerate if $Q^{D}$ is positive definite for every
partition $D$ of $ [ 0,T ] $.
\end{definition}

%re3.3 #&#
\begin{remark}
An obvious example of a ``degenerate'' Gaussian process is a bridge process
which returns to zero in $ [ 0,T ] $. This is plainly ruled
out by
an assumption of nondegeneracy.
\end{remark}

It is shown in \cite{CFV} that nondegeneracy is implied by (\ref{CFV
nondeg}%
). Thus, nondegeneracy is a weaker condition than (\ref{CFV nondeg}).
It also
has the advantage of being formulated more tangibly in terms of the covariance
matrix. The next lemma shows that Condition~\ref{nondeterm} also
implies that
the process is nondegenerate.

%le3.4 #&#
\begin{lemma}
Let $ ( X_{t} ) _{t\in [ 0,T ] }$ be a continuous
Gaussian process which satisfies Condition \ref{nondeterm} then $X$ is
nondegenerate.
\end{lemma}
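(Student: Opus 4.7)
The plan is to argue by contradiction. Suppose $Q^{D}$ fails to be positive definite for some partition $D=\{0=t_{0}<t_{1}<\cdots<t_{n}=T\}$. Since $Q^{D}$ is a covariance matrix it is positive semi-definite, so there must exist a nonzero vector $v=(v_{1},\ldots,v_{n})\in\mathbb{R}^{n}$ such that
\[
v^{T}Q^{D}v = \Var\biggl(\sum_{i=1}^{n}v_{i}X_{t_{i-1},t_{i}}\biggr) = 0,
\]
which means that $\sum_{i=1}^{n}v_{i}X_{t_{i-1},t_{i}}=0$ almost surely (using that the process is centred).

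Next I would pick any index $k\in\{1,\ldots,n\}$ with $v_{k}\neq 0$ and solve the above identity for $X_{t_{k-1},t_{k}}$, giving
\[
X_{t_{k-1},t_{k}} = -\frac{1}{v_{k}}\sum_{i\neq k}v_{i}X_{t_{i-1},t_{i}} \qquad \text{a.s.}
\]
The key observation is that the right-hand side is measurable with respect to $\mathcal{F}_{0,t_{k-1}}\vee\mathcal{F}_{t_{k},T}$, because each increment $X_{t_{i-1},t_{i}}$ with $i<k$ lives in $\mathcal{F}_{0,t_{k-1}}$ and each increment with $i>k$ lives in $\mathcal{F}_{t_{k},T}$. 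Consequently,
\[
\Var\bigl(X_{t_{k-1},t_{k}}\,\big|\,\mathcal{F}_{0,t_{k-1}}\vee\mathcal{F}_{t_{k},T}\bigr)=0.
\]

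The contradiction now follows immediately from Condition~\ref{non-determ}: since $t_{k}>t_{k-1}$ and
\[
\inf_{0\leq s<t\leq T}\frac{\Var\bigl(X_{s,t}\,\big|\,\mathcal{F}_{0,s}\vee\mathcal{F}_{t,T}\bigr)}{(t-s)^{\alpha}}>0,
\]
we would have $\Var(X_{t_{k-1},t_{k}}|\mathcal{F}_{0,t_{k-1}}\vee\mathcal{F}_{t_{k},T})\geq c(t_{k}-t_{k-1})^{\alpha}>0$, which is incompatible with the vanishing above. Hence no such $v$ can exist and $Q^{D}$ is positive definite for every partition $D$.

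There is no real obstacle here: the proof is essentially a one-line consequence of rewriting ``$v^{T}Q^{D}v=0$'' as a linear relation among increments and observing that one increment is then a measurable function of the others sitting in the complementary $\sigma$-algebras, which the non-determinism bound forbids. The only thing one needs to be slightly careful about is that non-degeneracy of $v$ guarantees the existence of a component $v_{k}\neq 0$ along which to pivot, and that the corresponding subinterval $[t_{k-1},t_{k}]$ has positive length, so that Condition~\ref{non-determ} delivers a strictly positive lower bound.
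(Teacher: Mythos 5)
Your proof is correct and follows essentially the same route as the paper's own argument: from $v^{T}Q^{D}v=0$ you extract an almost-sure linear relation among the increments, express one increment as a function of the others (which are measurable with respect to $\mathcal{F}_{0,t_{k-1}}\vee\mathcal{F}_{t_{k},T}$), and conclude that its conditional variance vanishes, contradicting Condition \ref{non-determ}. Nothing further is needed.
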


\begin{pf}
Fix a partition $D$ of $ [ 0,T ] $, and denote the covariance
matrix along this partition by $Q$ with entries as in (\ref{increment
matrix}%
). If $Q$ is not positive definite, then for some nonzero vector
$\lambda= ( \lambda_{1},\ldots,\lambda_{n} ) \in%
%TCIMACRO{\U{211d} }%
%BeginExpansion
\mathbb{R}
%EndExpansion
^{n}$ we have%
%
%e3.4 #&#
\begin{equation}
0=\lambda^{T}Q\lambda=E \Biggl[ \Biggl( \sum
_{i=1}^{n}\lambda_{i}%
X_{t_{i-1},t_{i}} \Biggr) ^{2} \Biggr]. \label{an}%
\end{equation}
Suppose, without loss of generality, that $\lambda_{j}\neq0$. Then from
(\ref{an}), we can deduce that
\[
X_{t_{j-1},t_{j}}=\sum_{i\neq j}^{n}
\frac{\lambda_{i}}{\lambda_{j}}%
X_{t_{i-1},t_{i}}%
\]
with probability one. This immediately implies that
\[
\operatorname{Var} ( X_{t_{j-1},t_{j}}|\mathcal {F}_{0,t_{j-1}}%
\vee\mathcal{F}_{t_{j},T} ) =0,
\]
which contradicts (\ref{index}).
\end{pf}

A crucial step in the proof of the main theorem is to establish lower bounds
on the eigenvalues of the Malliavin covariance matrix in order to obtain
moment estimates for its inverse. In the setting we have adopted, it
transpires that these eigenvalues can be bounded from below by some
power of
the 2D Young integral:%
%
%e3.5 #&#
\begin{equation}
\int_{ [ 0,T ] ^{2}}f_{s}f_{t}\,dR ( s,t )
\label{2d}%
\end{equation}
for some suitable (random) function $f\in C^{p\mbox{-}\mathrm{var}} (
[ 0,T ],%
%TCIMACRO{\U{211d} }%
%BeginExpansion
\mathbb{R}
%EndExpansion
^{d} ) $. By considering the Riemann sum approximations to (\ref{2d}),
the problem of finding a lower bound can be re-expressed in terms of
solving a
sequence of finite-dimensional constrained quadratic programming
problems. By
considering the dual of these problems, we can simplify the constraints which
appear considerably; they become nonnegativity constraints, which are much
easier to handle. Thus, the dual problem has an explicit solution
subject to a
dual feasibility condition. The following condition is what emerges as the
limit of the dual feasibility conditions for the discrete approximations.

%co3 #&#
\begin{condition}
\label{cond dom}Let $ ( X_{t} ) _{t\in [ 0,T ]
}$ be a
continuous, real-valued Gaussian process. We will assume that $X$ has
nonnegative conditional covariance in that for every $ [
u,v ]
\subseteq [ s,t ] \subseteq [ 0,S ] \subseteq
[
0,T ] $ we have
%
%e3.6 #&#
\begin{equation}
\operatorname{Cov} ( X_{s,t},X_{u,v}|
\mathcal{F}_{0,s}\vee \mathcal{F}_{t,S} ) \geq0.
\label{con dom eq}%
\end{equation}
\end{condition}

In Section~\ref{interpol}, we will prove a novel interpolation
inequality. The
significance of Condition \ref{cond dom} will become clearer when we work
through the details of that section. For the moment, we content
ourselves with
an outline. First, for a finite partition $D$ of the interval $ [
0,T ]$, one can consider the discretisation of the process $X_{t}$
conditioned on the increments in $D \cap (  [ 0,s ]
\cup [ t,T ]  ) $. Let $Q^{D}$ be the corresponding
covariance matrix of the increments [see $ ( \ref{increment
matrix}%
)$]. Then the conditional covariance $\operatorname{Cov} (
X_{s,t}^{D},X_{u,v}^{D}|\mathcal{F}_{0,s}^{D}\vee\mathcal
{F}_{t,T}^{D} )
$ of the discretised process can be characterised in terms of a Schur
complement $\Sigma$ of the matrix $Q^{D}$. Using this relation, the
condition
\[
\operatorname{Cov} \bigl( X_{s,t}^{D},X_{u,v}^{D}|
\mathcal {F}_{0,s}^{D}%
\vee\mathcal{F}_{t,T}^{D}
\bigr) \geq0
\]
is precisely what ensures that the row sums for $\Sigma$ are nonnegative.
Conversely, if for any finite partition $D$ all Schur complements of the
matrix $Q^{D}$ have nonnegative row sums, then Condition~\ref{cond
dom} is
satisfied. This relation motivates an alternative sufficient condition that
implies Condition~\ref{cond dom}, which has the advantage that it may
be more
readily verified for a given Gaussian process. In order to state the
condition, recall that an $n\times n$ real matrix $Q$ is diagonally dominant
if
%
%e3.7 #&#
\begin{equation}
Q_{ii}\geq\sum_{j\neq i}\llvert
Q_{ij}\rrvert \qquad\mbox{for every $i\in \{ 1,2,\ldots,n \} $.}
\label{diag}%
\end{equation}

%co4 #&#
\begin{condition}
\label{diagonal dominance}Let $ ( X_{t} ) _{t\in [
0,T ]
} $ be a continuous real-valued Gaussian process. For every $ [
0,S ] \subseteq [ 0,T ]$, we assume that $X$ has diagonally
dominant increments on $ [ 0,S ] $. By this, we mean that
for every
partition $D= \{ t_{i}\dvtx i=0,1,\ldots,n \} $ of $ [
0,S ]
$, the $n\times n$ matrix $ ( Q_{ij}^{D} ) _{1\leq i,j\leq
n}$ with
entries
\[
Q_{ij}^{D}=E [ X_{t_{i-1},t_{i}}X_{t_{j-1},t_{j}} ] =R
\pmatrix{
t_{i-1},t_{i}
\vspace*{2pt}\cr
t_{j-1},t_{j}}
\]
is diagonally dominant.
\end{condition}

Diagonal dominance is obviously in general a stronger assumption than
requiring that a covariance matrix has positive row sums. Consequently,
Condition \ref{diagonal dominance} is particularly useful for negatively
correlated processes, when diagonal dominance of the increments and positivity
of row sums are the same. The condition can then be expressed
succinctly as
\[
E [ X_{0,S}X_{s,t} ] \geq0\qquad \forall [ s,t ] \subseteq [ 0,S ]
\subseteq [ 0,T ].
\]
In fact, it turns out that Condition \ref{diagonal dominance} implies
Condition \ref{cond dom}. This is not obvious a priori, and ultimately depends
on two nice structural features. The first is the observation from linear
algebra that the property of diagonal dominance is preserved under taking
Schur complements (see \cite{zhang} for a proof of this). The second results
from the interpretation of the Schur complement (in the setting of Gaussian
vectors) as the covariance matrix of a certain conditional
distribution. We
will postpone the proof of this until Section~\ref{interpol} when these
properties will be used extensively.

The final condition we will impose is classical, namely H\"{o}rmander's
condition on the vector fields defining the RDE.

%co5 #&#
\begin{condition}[(H\"{o}rmander)]\label{horm}We assume that%
%
%e3.8 #&#
\begin{eqnarray}\label{Lie}%
&&\operatorname{span} \bigl\{ V_{1},\ldots,V_{d}, [
V_{i},V_{j} ], \bigl[ V_{i}, [
V_{j},V_{k} ] \bigr],\ldots\dvtx
\nonumber
\\[-8pt]
\\[-8pt]
\nonumber
&&\hspace*{76pt}\qquad  i,j,k,\ldots =0,1,\ldots,d
\bigr\} |_{y_{0}}=\mathcal{T}_{y_{0}}%
%TCIMACRO{\U{211d} }%
%BeginExpansion
\mathbb{R} %EndExpansion
^{e} \cong%
%TCIMACRO{\U{211d} }%
%BeginExpansion
\mathbb{R} %EndExpansion
^{e}.
\end{eqnarray}
\end{condition}

We are ready to formulate our main theorem.

%th3.5 #&#
\begin{theorem}
\label{main theorem}Let $ ( X_{t} ) _{t\in [
0,T ]
}= ( X_{t}^{1},\ldots,X_{t}^{d} ) _{t\in [ 0,T
] }$
be a
continuous Gaussian process, with i.i.d. components associated to the abstract
Wiener space $ ( \mathcal{W},\mathcal{H},\mu ) $. Assume
that some
(and hence every) component of $X$ satisfies:

\begin{longlist}[(1)]
\item[(1)] Condition \ref{standing assumption}, for some $\rho\in [1,2)$;

\item[(2)] Condition \ref{nondeterm}, with index of nondeterminacy
$\alpha<2/\rho$;

\item[(3)] Condition \ref{cond dom}, that is, it has nonnegative
conditional covariance.
\end{longlist}

Fix $p>2\rho$, and let $\mathbf{X\in}G\Omega_{p} ( \mathbb{R}
^{d} ) $ denote the canonical lift of $X$ to a Gaussian rough path.
Suppose $V= ( V_{1},\ldots,V_{d} ) $ is a collection of
$C^{\infty
}$-bounded vector fields on $%
%TCIMACRO{\U{211d} }%
%BeginExpansion
\mathbb{R}
%EndExpansion
^{e}$, and let $ ( Y_{t} ) _{t\in [ 0,T ] }$ be the
solution to the RDE
\[
dY_{t}=V ( Y_{t} ) \,d\mathbf{X}_{t}+V_{0}
( Y_{t} ) \,dt,\qquad Y ( 0 ) =y_{0}.
\]
Assume that the collection $ ( V_{0},V_{1},\ldots,V_{d} ) $ satisfy
H\"{o}rmander's condition, Condition \ref{horm}, at the starting point $y_{0}$. Then random variable $Y_{t}$ has a smooth density with respect to Lebesgue
measure on $%
%TCIMACRO{\U{211d} }%
%BeginExpansion
\mathbb{R}
%EndExpansion
^{e}$ for every $t\in(0,T]$.
\end{theorem}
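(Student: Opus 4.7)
The plan is to establish Theorem~\ref{main theorem} via the classical Malliavin--Shigekawa criterion, namely by verifying (i) $Y_{t}\in\mathbb{D}^{\infty}(\mathbb{R}^{e})$ for each $t\in(0,T]$, and (ii) the Malliavin covariance matrix $C_{t}^{ij}:=\langle DY_{t}^{i},DY_{t}^{j}\rangle_{\mathcal{H}}$ is invertible with $(\det C_{t})^{-1}\in L^{p}(\Omega)$ for every finite $p$. For~(i) I would proceed as indicated in point~(iii) of the introduction: the Gaussian tail bound on the Jacobian $J_{t\leftarrow 0}^{\mathbf{X}}(y_{0})$ from \cite{CLL}, together with the structural analysis of the higher-order Malliavin derivatives of the flow of \eqref{Int-Eq} carried out in~\cite{H3} (transposed to the present Gaussian setting via Condition~\ref{standing assumption}), yields finiteness of every Shigekawa--Sobolev norm of $Y_{t}$. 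Thus the substantive content is~(ii).

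Following point~(i) of the introduction I would write
\[
v^{T}C_{t}v=\int_{[0,t]^{2}}f_{s}(v;\omega)\,f_{r}(v;\omega)\,dR(s,r),\qquad f_{s}(v;\omega):=\bigl\langle v,\,J_{t\leftarrow s}^{\mathbf{X}}V(Y_{s})\bigr\rangle,
\]
noting that $f(v;\omega)$ is $\gamma$-H\"older continuous in $s$ with $\gamma=1/p$ and with H\"older seminorm controlled polynomially by $\|\mathbf{X}\|_{1/p}$ and the Lip-norm of $V$. The interpolation inequality of Section~\ref{interpol}, which is where Conditions~\ref{non-determ} and~\ref{cond dom} enter decisively (and where the constraint $\alpha<2/\rho$ must be used), then yields
\[
v^{T}C_{t}v\geq c\,|f(v;\omega)|_{\gamma;[0,t]}^{a}\,|f(v;\omega)|_{\infty;[0,t]}^{2-a}
\]
for some explicit $a\in(0,2)$. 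Since $|f(v;\omega)|_{\gamma;[0,t]}$ has moments of all orders by point (iii) of the introduction, this reduces~(ii) to establishing a polynomial-in-$\varepsilon$ tail bound, uniform in unit vectors $v\in\mathbb{R}^{e}$, for the event $\{|f(v;\omega)|_{\infty;[0,t]}<\varepsilon\}$.

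The main obstacle is precisely this uniform sup-norm lower bound, and it is here that H\"ormander's Condition~\ref{horm} enters through a deterministic induction on the depth of iterated Lie brackets of $\{V_{0},V_{1},\ldots,V_{d}\}$. If the trajectory $s\mapsto\langle v,J_{t\leftarrow s}^{\mathbf{X}}W(Y_{s})\rangle$ is uniformly small for some bracket $W$ already produced by the induction, then (by differentiating $J_{t\leftarrow s}^{\mathbf{X}}W(Y_{s})$ as an RDE) this smallness is recast as the smallness, in uniform norm, of a rough integral of the form $\int A\,d\mathbf{X}+\int B\,ds$ whose integrands encode $\langle v,J_{t\leftarrow\cdot}^{\mathbf{X}}[V_{i},W](Y_{\cdot})\rangle$ for $i=0,1,\ldots,d$. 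The generalised Norris lemma of Section~\ref{Norris lemma section}, valid for arbitrary $p$ under the uniform modulus of H\"older roughness assumption (this is the second main contribution of the paper, point~(iv) of the introduction), then forces each of these bracket-type functions to be small as well, with a quantitative but acceptable loss in the exponent. After finitely many iterations, Condition~\ref{horm} at $y_{0}$ delivers a contradiction unless $|f(v;\omega)|_{\infty;[0,t]}$ admits the desired polynomial lower bound, and a finite net on the unit sphere in $\mathbb{R}^{e}$ then upgrades this to the uniform bound needed for $\mathbb{P}(\lambda_{\min}(C_{t})<\varepsilon)\lesssim\varepsilon^{q}$ for arbitrary $q$.

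The step I expect to require the most care is the bookkeeping of the random constants accumulating through the induction: each application of the Norris lemma introduces losses involving $\|\mathbf{X}\|_{1/p}$, $J_{t\leftarrow 0}^{\mathbf{X}}(y_{0})$, and the inverse of the modulus of H\"older roughness of $X$. For the inductive scheme to close quantitatively, all of these must have sufficiently many moments under the running assumptions --- in particular, Condition~\ref{non-determ} with $\alpha<2/\rho$ must be shown to imply that the modulus of H\"older roughness of $X$ is itself a well-behaved random variable with negative-moment estimates of every order. The two integrability roles played by~\cite{CLL} alluded to in the introduction are precisely what should make this accounting feasible, and once they are in place the combination of~(i) and~(ii) gives the smooth density.
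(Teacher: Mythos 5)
Your proposal is correct and follows essentially the same route as the paper: Malliavin smoothness of $Y_t$ via the integrability estimates of \cite{CLL} combined with the higher-order derivative analysis in the style of \cite{H3}, and non-degeneracy of $C_t$ via the 2D Young representation, the interpolation inequality of Section~\ref{interpol} at the first level, and an induction over Lie brackets powered by the generalised Norris lemma (Theorem~\ref{thm:NlemmaRP}), with moments of all orders for the accumulating random constants, including $L_{\theta}(X)^{-1}$ via the small-ball estimates. Two bookkeeping corrections: the interpolation inequality carries a \emph{negative} power of the H\"older norm (it is used in the form $\|f\|_{\infty}\lesssim (v^{T}C_{t}v)^{\gamma/(2\gamma+\alpha)}\|f\|_{\gamma}^{\alpha/(2\gamma+\alpha)}$, not a lower bound with $a\in(0,2)$), and the hypothesis $\alpha<2/\rho$ enters not in that inequality but in choosing $p>2\rho$ and $\theta$ with $\alpha/2<\theta<2/p$ so that the H\"older-roughness estimates and the Norris lemma apply simultaneously; moreover, the paper's bound in Proposition~\ref{CMatrix} is pathwise and uniform in $v$, so H\"ormander's condition is exploited deterministically through $\inf_{|v|=1}\sum_{W}|\langle v,W(y_{0})\rangle|>0$ and no net over the unit sphere is needed.
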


%s4 #&#
\section{Examples}
\label{examples section}

In this section, we demonstrate how the conditions on $X$ we introduced
in the
last section can be checked for a number of well-known processes. We
choose to
focus on three particular examples: fractional Brownian motion (fBm) with
Hurst parameter $H>1/4$, the Ornstein--Uhlenbeck (OU) process and the
fractional Brownian bridge (fBb) with Hurst parameter $1/3<H\leq1/2$. Together,
these encompass a broad range of Gaussian processes that one encounters in
practice. Of course, there are many more examples, but these should be checked
on a case-by-case basis by analogy with our presentation for these core
examples. We first remark that Condition~\ref{standing assumption} is
straightforward to check in all these cases (see, e.g., \cite{FV} and
\cite{CFV}). Proving that the fBb (returning at $T^{\prime}>0$) with $H>1/3$
satisfies Condition \ref{standing assumption} is a simple calculation
in a
similar style.

 We will now commence with a verification of the nondeterminism
condition, that is, Condition~\ref{nondeterm}.

%s4.1 #&#
\subsection{Nondeterminism-type condition}

Recall that the Cameron--Martin space~$\mathcal{H}$ is defined to be
the
completion of the linear space of functions of the form
\[
\sum_{i=1}^{n}a_{i}R (
t_{i},\cdot ),\qquad a_{i}\in%
%TCIMACRO{\U{211d} }%
%BeginExpansion
\mathbb{R} %EndExpansion
\mbox{ and }t_{i}\in [ 0,T ],
\]
with respect to the inner product
\[
\Biggl\langle\sum_{i=1}^{n}a_{i}R
( t_{i},\cdot ),\sum_{j=1}%
^{m}b_{j}R ( s_{j},\cdot ) \Biggr
\rangle_{\mathcal
{H}}=\sum_{i=1}^{n}\sum
_{j=1}^{m}a_{i}b_{j}R
( t_{i},s_{j} ).
\]
Some authors prefer instead to work with the set of step functions
$\mathcal{E}$
\[
\mathcal{E=} \Biggl\{ \sum_{i=1}^{n}a_{i}1_{ [ 0,t_{i} ]
}\dvtx a_{i}
\in%
%TCIMACRO{\U{211d} }%
%BeginExpansion
\mathbb{R} %EndExpansion
,t_{i}\in [ 0,T ] \Biggr\},
\]
equipped with the inner product
\[
\langle1_{ [ 0,t ] },1_{ [ 0,s ] } \rangle _{\mathcal{\tilde{H}}}=R ( s,t ).
\]
If $\mathcal{\tilde{H}}$ denote the completion of $\mathcal{E}$ w.r.t.
$ \langle
\cdot,\cdot \rangle_{\mathcal{\tilde{H}}}$, then it is obvious
that the
linear map $\phi\dvtx \mathcal{E}\rightarrow\mathcal{H}$ defined by
%
%e4.1 #&#
\begin{equation}
\phi ( 1_{ [ 0,t ] } ) =R ( t,\cdot ) \label{isom}%
\end{equation}
extends to an isometry between $\mathcal{\tilde{H}}$ and $\mathcal
{H}$. We
also recall that $\mathcal{\tilde{H}}$ is isometric to the Hilbert space
$H^{1} ( Z ) \subseteq L^{2} ( \Omega,\mathcal
{F},P ) $
which is defined to be the $\llvert \cdot\rrvert _{L^{2} (
\Omega ) }$-closure of the set
\[
\Biggl\{ \sum_{i=1}^{n}a_{i}Z_{t_{i}}\dvtx a_{i}
\in%
%TCIMACRO{\U{211d} }%
%BeginExpansion
\mathbb{R} %EndExpansion
,t_{i}\in [ 0,T ], n\in%
%TCIMACRO{\U{2115} }%
%BeginExpansion
\mathbb{N} %EndExpansion
\Biggr\}.
\]
In particular, we have that $\llvert 1_{ [ 0,t ] }\rrvert
_{\mathcal{\tilde{H}}}=\llvert  Z_{t}\rrvert _{L^{2} (
\Omega ) }$. We will now prove that Condition~\ref{nondeterm} holds
whenever it is the case that $\mathcal{\tilde{H}}$ embeds
continuously in
$L^{q} (  [ 0,T ]  ) $ for some $q\geq1$. Hence,
Condition \ref{nondeterm} will simplify in many cases to showing that
\[
|\tilde{h}|_{L^{q} [ 0,T ] }\leq C|\tilde{h}|_{\mathcal
{\tilde
{H}}%
}
\]
for some $C>0$ and all $\tilde{h}\in\mathcal{\tilde{H}}$.

%le4.1 #&#
\begin{lemma}
\label{CM embed general}Suppose $ ( Z_{t} ) _{t\in [
0,T ] }$ is a continuous real-valued Gaussian processes. Assume
that for
some $q\geq1$ we have $\mathcal{\tilde{H}\hookrightarrow}L^{q} (
[
0,T ]  ) $. Then $Z$ satisfies Condition \ref{nondeterm} with
index of nondeterminacy less than or equal to $2/q$, that is,
\[
\inf_{0\leq s<t\leq T}\frac{1}{ ( t-s ) ^{2/q}}%
\operatorname{Var}
( Z_{s,t}|\mathcal{F}_{0,s}\vee\mathcal {F}%
_{t,T} ) >0.
\]
\end{lemma}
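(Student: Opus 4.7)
The plan is to exploit the canonical Hilbert-space isometry between $H^1(Z)\subseteq L^2(\Omega)$ and $\tilde{\mathcal H}$ sending $Z_{u,v}\mapsto \mathbf{1}_{[u,v]}$, and then feed in the continuous embedding $\tilde{\mathcal H}\hookrightarrow L^q([0,T])$. Under this isometry, the $\sigma$-algebra $\mathcal F_{0,s}\vee\mathcal F_{t,T}$ corresponds to the closed subspace
\[
\mathcal K_{s,t} := \overline{\mathrm{span}}^{\tilde{\mathcal H}}\bigl\{\mathbf{1}_{[u,v]} : [u,v]\subseteq [0,s]\cup[t,T]\bigr\}\subseteq \tilde{\mathcal H},
\]
since conditional expectation of centred Gaussians is $L^2$-projection onto the closed linear span. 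Consequently, for each $0\leq s<t\leq T$,
\[
\mathop{\mathrm{Var}}\bigl(Z_{s,t}\,\big|\,\mathcal F_{0,s}\vee\mathcal F_{t,T}\bigr) \;=\; \inf_{\tilde h\in \mathcal K_{s,t}} \bigl|\mathbf{1}_{[s,t]}-\tilde h\bigr|^2_{\tilde{\mathcal H}}.
\]

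Next I would invoke the hypothesis $\tilde{\mathcal H}\hookrightarrow L^q([0,T])$ with some embedding constant $C>0$, which upgrades the lower bound to
\[
\bigl|\mathbf{1}_{[s,t]}-\tilde h\bigr|_{\tilde{\mathcal H}} \;\geq\; \tfrac{1}{C}\,\bigl|\mathbf{1}_{[s,t]}-\tilde h\bigr|_{L^q([0,T])}
\]
for every $\tilde h\in\mathcal K_{s,t}$. The key observation is then that every element of $\mathcal K_{s,t}$ vanishes almost everywhere on $[s,t]$ as an $L^q$ function: this is obvious for the generating step functions $\mathbf{1}_{[u,v]}$ with $[u,v]\subseteq [0,s]\cup[t,T]$, and the property is preserved under $\tilde{\mathcal H}$-limits because the continuous embedding into $L^q$ turns $\tilde{\mathcal H}$-Cauchy sequences into $L^q$-Cauchy sequences whose limits must still vanish a.e.\ on $[s,t]$.

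From this vanishing property it follows that
\[
\bigl|\mathbf{1}_{[s,t]}-\tilde h\bigr|^q_{L^q([0,T])} \;\geq\; \int_s^t |1|^q\,du \;=\; t-s,
\]
uniformly in $\tilde h\in\mathcal K_{s,t}$. Combining the three displays and taking the infimum over $\tilde h$ yields
\[
\mathop{\mathrm{Var}}\bigl(Z_{s,t}\,\big|\,\mathcal F_{0,s}\vee\mathcal F_{t,T}\bigr) \;\geq\; \frac{1}{C^{2}}\,(t-s)^{2/q},
\]
which is precisely Condition~\ref{non-determ} with index of non-determinism at most $2/q$.

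The only mildly delicate point is the identification of the subspace $\mathcal K_{s,t}$ with the one generated by the conditioning $\sigma$-algebra, and the verification that its elements genuinely vanish on $[s,t]$ in $L^q$ after passing to the closure; everything else is an immediate two-line estimate. I do not anticipate any serious obstacle beyond being careful with these closure arguments.
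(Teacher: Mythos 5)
Your proposal is correct and follows essentially the same route as the paper: identify the conditional variance with the squared $\tilde{\mathcal H}$-distance from $\mathbf{1}_{[s,t]}$ to the span of indicators of intervals avoiding $(s,t)$, then use the $L^{q}$ embedding and the fact that these perturbations vanish on $(s,t)$ to get the uniform lower bound $(t-s)^{2/q}$. The only cosmetic difference is that the paper works with finite linear combinations approximating $E[Z_{s,t}\mid\mathcal G]$ (so no closure argument is needed), whereas you pass to the closed subspace first and justify the a.e.\ vanishing by a subsequence argument, which is fine.
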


\begin{pf}
Fix $ [ s,t ] \subseteq [ 0,T ] $ and for
brevity let
$\mathcal{G}$ denote the $\sigma$-algebra $\mathcal{F}_{0,s}\vee
\mathcal{F}_{t,T}$. Then, using the fact that $\operatorname{Var} (
Z_{s,t}|\mathcal{G} ) $ is deterministic and positive, we
have
%
%e4.2 #&#
%e4.3 #&#
%e4.4 #&#
\begin{eqnarray*}
\Var ( Z_{s,t}|\mathcal{G} ) &=& \bigl\llVert \Var ( Z_{s,t}|
\mathcal{G} ) \bigr\rrVert _{L^{2} ( \Omega
) } = E \bigl[ E \bigl[ \bigl(
Z_{s,t}-E [ Z_{s,t}|\mathcal{G} ] \bigr) ^{2}|
\mathcal{G} \bigr] ^{2} \bigr] ^{1/2}
\\
&= &E \bigl[ \bigl( Z_{s,t}-E [ Z_{s,t}|\mathcal{G} ] \bigr)
^{2}%
\bigr] = \bigl\llVert Z_{s,t}-E [
Z_{s,t}|\mathcal{G} ] \bigr\rrVert _{L^{2} ( \Omega ) }^{2}
\\
&= &\inf_{Y\in L^{2} ( \Omega,\mathcal{G},P ) } \llVert Z_{s,t}-Y\rrVert
_{L^{2} ( \Omega )
}^{2}.
\end{eqnarray*}
We can therefore find sequence of random variables $ (
Y_{n} ) _{n=1}^{\infty}\subset L^{2} ( \Omega,\mathcal
{G},P )
$ such that%
%
%e4.5 #&#
\begin{equation}
\llVert Z_{s,t}-Y_{n}\rrVert _{L^{2} ( \Omega )
}^{2}=E
\bigl[ ( Z_{s,t}-Y_{n} ) ^{2} \bigr] \downarrow
\operatorname{Var} ( Z_{s,t}|\mathcal{G} ). \label
{approx}%
\end{equation}
Moreover, because $E [ Z_{s,t}|\mathcal{G} ] $ belongs to
the closed
subspace $H^{1} ( Z ) $, we can assume that $Y_{n}$ has the form
\[
Y_{n}=\sum_{i=1}^{k_{n}}a_{i}^{n}Z_{t_{i}^{n},t_{i+1}^{n}}
\]
for some sequence of real numbers
$ \{ a_{i}^{n}\dvtx i=1,\ldots,k_{n} \}$
and a collection of subintervals
\[
\bigl\{ \bigl[ t_{i}^{n},t_{i+1}^{n}
\bigr] \dvtx i=1,\ldots,k_{n} \bigr\},
\]
which satisfy $ [ t_{i}^{n},t_{i+1}^{n} ] \subseteq [
0,s ] \cup [ s,T]$ for every $n\in%
%TCIMACRO{\U{2115} }%
%BeginExpansion
\mathbb{N}
%EndExpansion
$.

We now exhibit a lower bound for $\llVert  Z_{s,t}-Y_{n}\rrVert
_{L^{2} ( \Omega ) }^{2}$ which is independent of $n$ [and hence
from (\ref{approx}) will apply also to $\operatorname{Var} (
Z_{s,t}|\mathcal{G} ) $]. Let us note that the isometry between the
$H^{1} ( Z ) $ and $\mathcal{\tilde{H}}$ yields
%
%e4.6 #&#
\begin{equation}
\llVert Z_{s,t}-Y_{n}\rrVert _{L^{2} ( \Omega )
}^{2}=|
\tilde {h}_{n}|_{\mathcal{\tilde{H}}}^{2}, \label{isom1}%
\end{equation}
where
\[
\tilde{h}_{n} ( \cdot ):=\sum_{i=1}^{k_{n}}a_{i}^{n}1_{ [
t_{i}^{n},t_{i+1}^{n} ] }
( \cdot ) +1_{ [
s,t ]
} ( \cdot ) \in\mathcal{\tilde H}.
\]
The embedding $\mathcal{\tilde{H}\hookrightarrow}L^{q} (  [
0,T ]  ) $ then shows that
\[
|\tilde{h}_{n}|_{\mathcal{\tilde{H}}}^{2}\geq c|\tilde
{h}|_{L^{q} [
0,T ] }^{2}\geq c ( t-s ) ^{2/q}.
\]
The result follows immediately from this together with (\ref{approx}) and
(\ref{isom1}).
\end{pf}

Checking that $\mathcal{\tilde{H}}$ embeds continuously in a suitable
$L^{q} (  [ 0,T ]  ) $ space is something which is
readily done for our three examples. This is what the next lemma shows.

%le4.2 #&#
\begin{lemma}
\label{CM embed}If $ ( Z_{t} ) _{t\in [ 0,T ]
}$ is fBm
with Hurst index $H\in ( 0,1/2 ) $ and $q\in [1,2)$ then
$\mathcal{\tilde{H}} \hookrightarrow L^{q} (  [ 0,T ]
)
$. If $ ( Z_{t} ) _{t\in [ 0,T ] }$ is the (centred)
Ornstein--Uhlenbeck process or the Brownian bridge (returning to zero after
time $T$) then $\mathcal{\tilde{H}}$ $\hookrightarrow L^{2} (
[
0,T ]  ) $.
\end{lemma}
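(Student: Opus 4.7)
My plan is to handle the three processes separately by exploiting the explicit Volterra-type representation of $Z$ as a linear functional of an underlying standard Brownian motion $W$ in each case. Such a representation provides a concrete realization of the abstract isometry $\tilde{\mathcal{H}} \cong H^{1}(Z) \subseteq H^{1}(W) \cong L^{2}$ used in Lemma \ref{CM embed general}, so proving the embedding $\tilde{\mathcal{H}} \hookrightarrow L^{q}$ reduces to inverting an explicit operator and controlling its $L^{2} \to L^{q}$ norm.

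For the Brownian bridge returning at some time $T' > T$, I would use $Z_{t} = W_{t} - (t/T') W_{T'}$. Under the isometry a step function $\tilde{h} \in \mathcal{E}$ maps to $g(u) = \tilde{h}(u)\,\mathbf{1}_{[0,T]}(u) - c\,\mathbf{1}_{[0,T']}(u) \in L^{2}([0,T'])$, where $c = (1/T') \int_{0}^{T} \tilde{h}(u)\,du$. Expanding the square and simplifying, a direct computation gives
\[
|\tilde{h}|_{\tilde{\mathcal{H}}}^{2} \;=\; |\tilde{h}|_{L^{2}([0,T])}^{2} - \frac{T^{2}}{T'}\,\bar{\tilde{h}}^{2}, \qquad \bar{\tilde{h}} := \frac{1}{T}\int_{0}^{T} \tilde{h}(u)\,du,
\]
and Cauchy--Schwarz yields $\bar{\tilde{h}}^{2} \leq (1/T)|\tilde{h}|_{L^{2}}^{2}$, hence $|\tilde{h}|_{\tilde{\mathcal{H}}}^{2} \geq \tfrac{T'-T}{T'}|\tilde{h}|_{L^{2}}^{2}$. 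For the centred Ornstein--Uhlenbeck process with $Z_{t} = \sigma \int_{0}^{t} e^{-\lambda(t-s)}\,dW_{s}$, I would first integrate by parts in the explicit formula for $g$ to obtain the Volterra identity $g(s) = \sigma\tilde{h}(s) - \lambda \sigma \int_{s}^{T} e^{-\lambda(u-s)} \tilde{h}(u)\,du$, i.e.\ $g = \sigma (I - \lambda K)\tilde{h}$ where $K$ is a backward Volterra integral operator on $L^{2}([0,T])$. Since backward Volterra operators are quasi-nilpotent on $L^{2}$, the operator $I - \lambda K$ is boundedly invertible and $|\tilde{h}|_{L^{2}} \leq C|g|_{L^{2}} = C|\tilde{h}|_{\tilde{\mathcal{H}}}$.

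For fBm with $H \in (0, 1/2)$, my plan is to invoke the classical Molchan--Golosov representation $Z_{t} = \int_{0}^{t} K_{H}(t,s)\,dW_{s}$ and the associated isometry $K_{H}^{*}\colon \tilde{\mathcal{H}} \to L^{2}([0,T])$. For $H < 1/2$, the inverse $(K_{H}^{*})^{-1}$ is, modulo bounded multiplication by the regular weights $t^{1/2 - H}$ and $s^{H - 1/2}$, a Riemann--Liouville fractional integral of order $1/2 - H$. The Hardy--Littlewood--Sobolev inequality then yields that this inverse maps $L^{2}([0,T])$ boundedly into $L^{1/H}([0,T])$, giving $\tilde{\mathcal{H}} \hookrightarrow L^{1/H}([0,T])$. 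Since $H < 1/2$ implies $1/H > 2 > q$ for every $q \in [1,2)$, the trivial embedding $L^{1/H}([0,T]) \hookrightarrow L^{q}([0,T])$ closes the case.

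The main obstacle will be the fBm case, where one must control the behaviour of the singular weights $s^{H - 1/2}$ near $s = 0$ when applying Hardy--Littlewood--Sobolev to the weighted fractional integral. In practice, the cleanest route is to import the embedding result directly from the existing Malliavin-calculus literature on fBm rather than redoing this analysis; the OU and Brownian-bridge cases, by contrast, reduce to the elementary Volterra-inversion and Cauchy--Schwarz computations sketched above.
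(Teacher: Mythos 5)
Your proposal follows essentially the same route as the paper: identify the isometry $K^{\ast}$ of $\tilde{\mathcal{H}}$ with (a subspace of) $L^{2}$ coming from a Volterra/Wiener-integral representation of the process, and bound the inverse as a map from $L^{2}$ into $L^{q}$ --- by direct computation for the Ornstein--Uhlenbeck process and the Brownian bridge, and by the Hardy--Littlewood lemma imported from the fBm literature for fractional Brownian motion. Your bridge computation ($|\tilde{h}|_{\tilde{\mathcal{H}}}^{2}=|\tilde{h}|_{L^{2}}^{2}-\frac{T^{2}}{T'}\bar{\tilde{h}}^{2}\geq\frac{T'-T}{T'}|\tilde{h}|_{L^{2}}^{2}$ via Cauchy--Schwarz) and your OU argument (writing $K^{\ast}=\sigma(I-\lambda K)$ with $K$ a backward Volterra operator with bounded kernel, hence $I-\lambda K$ boundedly invertible on $L^{2}$) are both correct, and in fact more explicit than the paper, which only says ``direct calculation on $K^{\ast}$''; the paper's slightly different normalisations of the covariances change only the constants.

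One caveat on the fBm case: the weight $s^{H-1/2}$ sitting inside the Molchan--Golosov operator is singular at $s=0$, not ``regular,'' so the composition ``bounded multiplications $\circ\, I_{T^-}^{1/2-H}$'' does not by itself give boundedness of $(K^{\ast})^{-1}$ from $L^{2}$ into $L^{1/H}$. The standard execution is: for $\phi\in L^{2}$ the function $u^{H-1/2}\phi(u)$ lies in $L^{r}$ for every $r<1/(1-H)$ by H\"older, and the Hardy--Littlewood inequality then places $I_{T^-}^{1/2-H}$ of it in $L^{q}$ for every $q<2$ --- which is precisely the range $q\in[1,2)$ asserted in the lemma, and no more. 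Since you flag this weight issue yourself and propose to quote the embedding from the literature (exactly what the paper does, citing the Hardy--Littlewood lemma via Nualart), this is an imprecision in your stated intermediate claim rather than a gap in the conclusion you need.
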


\begin{pf}
The proof for each of the three examples has the same structure. We first
identify an isometry $K^{\ast}$ which maps $\mathcal{\tilde{H}}$ surjectively
onto $L^{2} [ 0,T ] $. (The operator $K^{\ast}$ is of course
different for the three examples.) We then prove that the inverse
$ (
K^{\ast} ) ^{-1}$ is a bounded linear operator when viewed as a
map from
$L^{2} [ 0,T ] $ into $L^{q} [ 0,T ] $. For fBm
this is
shown via the Hardy--Littlewood lemma (see \cite{nualart}). For the OU process
and the Brownian bridge, it follows from a direct calculation on the operator
$K^{\ast}$. Equipped with this fact, we can deduce that
%
%e4.7 #&#
\begin{eqnarray*}
|\tilde{h}|_{L^{q} [ 0,T ] } &=&\bigl\llvert \bigl( K^{\ast
} \bigr)
^{-1}K^{\ast}\tilde{h}\bigr\rrvert _{L^{q} [ 0,T ] } \leq\bigl
\llvert \bigl( K^{\ast} \bigr) ^{-1}\bigr\rrvert
_{L^{2}\rightarrow L^{q}}\bigl\llvert K^{\ast}\tilde{h}\bigr\rrvert
_{L^{2}%
[ 0,T ] }
\\
& =&\bigl\llvert \bigl( K^{\ast} \bigr) ^{-1}
\bigr\rrvert _{L^{2}\rightarrow
L^{q}}|\tilde{h}|_{\mathcal{\tilde{H}}},
\end{eqnarray*}
which completes the proof.
\end{pf}

%re4.3 #&#
\begin{remark}
We can verify the condition in the case of the fBb by more direct
means. One
representation of the fBb is of the form
%
%e4.8 #&#
\begin{equation}
X_{t}=B_{t}-a_{t}B_{T}\qquad\mbox{with }
a_{t}=\frac{t^{2H}+T^{2H}%
-(T-t)^{2H}}{2T^{2H}}. \label{eq:anticip-rep-fbb}%
\end{equation}
Then
\begin{eqnarray*}
\operatorname{Var} ( X_{s,t}|\mathcal{F}_{0,s}\vee
\mathcal {F}%
_{t,T} ) &\geq&\operatorname{Var} (
X_{s,t}|\mathcal {F}_{0,s}%
\vee
\mathcal{F}_{t,T}, B_{T} ) =\operatorname{Var} \bigl(
B_{s,t}|\mathcal{F}_{0,s}^{B}\vee
\mathcal{F}_{t,T}^{B} \bigr)\\
& \asymp& c r^{2H}.
\end{eqnarray*}
\end{remark}

As an immediate corollary of the last two lemmas, we can conclude that the
(centred) Ornstein--Uhlenbeck process and the Brownian bridge
(returning to
zero after time $T$) both satisfy Condition~\ref{nondeterm} with
index of
nondeterminism no greater than unity. In the case of fBm $(Z_{t}^{H}%
)_{t\in [ 0,T ] }$, the scaling properties of $Z^{H}$ enable
us to
say more about the nondeterminism index than can be obtained by an immediate
application of Lemmas \ref{CM embed general} and \ref{CM embed}. To
see this,
note that for fixed $ [ s,t ] \subseteq [ 0,T ]
$ we can
introduce a new process
\[
\tilde{Z}_{u}^{H}:= ( t-s ) ^{-H}Z_{u ( t-s ) }^{H}.
\]
$\tilde{Z}$ defines another fBm, this time on the interval $[0,T (
t-s ) ^{-1}]=:[0,\tilde{T}]$. Let $u=s ( t-s ) ^{-1}%
,v=t ( t-s ) ^{-1}$ and denote by $\mathcal{\tilde
{F}}_{a,b}$ the
$\sigma$-algebra generated by the increments of $\tilde{Z}$ in $ [
a,b ] $. Scaling then allows us to deduce that
%
%e4.9 #&#
\begin{equation}
\operatorname{Var} ( Z_{s,t}|\mathcal{F}_{0,s}\vee
\mathcal {F}%
_{t,T} ) = ( t-s ) ^{2H}\mathop{
\operatorname{Var}}(\tilde {Z}%
_{u,v}|\mathcal{
\tilde{F}}_{0,u}\vee\mathcal{\tilde{F}}_{v,\tilde{T}}).
\label{scaling}%
\end{equation}
By construction $u-v=1$. And since $\tilde{Z}$ is fBm it follows from Lemmas
\ref{CM embed general} and~\ref{CM embed} that
%
%e4.10 #&#
\begin{equation}
\mathop{\inf_{[ u,v]\subseteq [0,\tilde{T}],
}}_{|u-v|=1}%
\mathop{
\operatorname{Var}}(\tilde{Z}_{u,v}|\mathcal{\tilde{F}}_{0,u}%
\vee\mathcal{\tilde{F}}_{v,\tilde{T}})>0. \label{em}%
\end{equation}
It follows from (\ref{scaling}) and (\ref{em}) that $Z^{H}$ satisfies
Condition \ref{nondeterm} with index of nondeterminacy no greater
than $2H$.

%s4.2 #&#
\subsection{Nonnegativity of the conditional covariance}

We finally verify that our example processes also satisfy Condition
\ref{cond dom}. We first consider the special case of process with negatively
correlated increments.

%s4.2.1 #&#
\subsubsection{Negatively correlated increments}

From our earlier discussion, it suffices to check that Condition
\ref{diagonal dominance} holds. In other words, that $Q^{D}$ is diagonally
dominant for every partition $D$. This amounts to showing that
\[
E [ Z_{t_{i-1},t_{i}} Z_{0,T} ] \geq0
\]
for every $0\leq t_{i-1}<t_{i}\leq T$. It is useful to have two general
conditions on $R$ which will guarantee that (i) the increments of $Z$ are
negatively correlated, and (ii) diagonal dominance is satisfied. Here
is a
simple characterisation of these properties:

\textit{Negatively correlated increments}: If $i<j$, write
\[
Q_{ij}=E [ Z_{t_{i-1},t_{i}} Z_{t_{j-1},t_{j}} ] =\int
_{t_{i-1}
}^{t_{i}}\int_{t_{j-1}}^{t_{j}}
\partial_{ab}^{2}R(a,b) \,da \,db,
\]
so that a sufficient condition for $Q_{ij}<0$ is $\partial_{ab}^{2}%
R(a,b)\leq0$ for $a<b$. This is trivially verified for fBm with
$H<1/2$. Note
that the distributional derivative $\partial_{ab}^{2}R(a,b)$ might be singular
on the
diagonal, but the diagonal is avoided here.

\textit{Diagonal dominance}: If we assume negatively correlated
increments, then diagonal dominance is equivalent to $\sum_{j}Q_{ij}>0$.
Moreover, if we assume $Z_{0}$ is deterministic and $Z$ is centred we get
\[
\sum_{j}Q_{ij}=E [ Z_{t_{i-1},t_{i}}
Z_{T} ] =\int_{t_{i-1}
}^{t_{i}}
\partial_{a}R(a,T) \,da,
\]
so that a sufficient condition for $\sum_{j}Q_{ij}\geq0$ is $\partial
_{a}R(a,b)\geq0$ for $a<b$. This is again trivially verified for fBm with
$H<1/2$.

%ex4.4 #&#
\begin{example}
In the case where $ ( Z_{t} ) _{t\in [ 0,T ] }$
is the
Brownian bridge, which returns to zero at time $T^{\prime}>T$ we have
\[
R ( a,b ) =\frac{a}{T^{\prime}} \bigl( T^{\prime}-b \bigr) \qquad\mbox{for
$a<b$.}%
\]
It is then immediate that $\partial_{ab}^{2}R(a,b)=-1/T^{\prime}<0$
$ $and
$\partial_{a}R(a,b)=1-b/T^{\prime}>0$. Similarly, for the centred
Ornstein--Uhlenbeck process, we have
\[
R ( a,b ) =2e^{-b}\sinh ( a ) \qquad\mbox{for $a<b$.}%
\]
From which it follows that $\partial_{ab}^{2}R(a,b)=-2e^{-b}\cosh
(
a ) <0$ and $\partial_{a}R(a,b)=2e^{-b}\cosh ( a )
>0$. In
the more general case of the fractional Brownian bridge returning to
zero at time $T^{\prime}>T$,
the covariance function is given for $a<b$ by
\[
R(a,b)=\frac{1}{2}R_H(a,b) -\frac
{1}{2(T^{\prime})^{2H}}R_H
\bigl(a,T'\bigr) R_H\bigl(b,T'\bigr),
\]
where we used the shorthand $R_H(a,b) = a^{2H}+b^{2H}-(b-a)^{2H}$.
Thus,
\begin{eqnarray*}
\partial_{a}R(a,b)&=&H \bigl[ a^{2H-1}+(b-a)^{2H-1}
\bigr]\\
&&{} -\frac{H}%
{(T^{\prime})^{2H}} \bigl[ a^{2H-1}+\bigl(T^{\prime}-a
\bigr)^{2H-1} \bigr] R_H\bigl(b,T'\bigr).
\end{eqnarray*}
This is positive, since $R_H(b,T') \leq(T^{\prime})^{2H}$ and
$(T^{\prime}-a)^{2H-1}\leq(b-a)^{2H-1}$ whenever
$H<1/2$. The fact that $\partial_{ab}R(a,b)\leq0$ is also easily seen.
\end{example}

%s4.2.2 #&#
\subsubsection{Without negatively correlated increments}

In the three examples, we were able to check Condition \ref{cond dom}
by using
the negative correlation of the increments and showing explicitly the diagonal
dominance. In the case where the increments have positive or mixed correlation,
we may have to check the weaker condition, Condition \ref{cond dom}, directly.
An observation that might be useful in this regard is the following
geometrical interpretation. Recall that we want to want to check that
\[
\operatorname{Cov} ( Z_{s,t},Z_{u,v}|
\mathcal{F}_{0,s}\vee \mathcal{F}_{t,T} ) \geq0.
\]
For simplicity, let $X=Z_{s,t}$, $Y=Z_{u,v}$ and $\mathcal{G=F}_{0,s}%
\vee\mathcal{F}_{t,T}$. The map $P_{\mathcal{G}}\dvtx Z\mapsto E [
Z|\mathcal{G} ] $ then defines a projection from the Hilbert space
$L^{2} ( \Omega,\mathcal{F},P ) $ onto the closed subspace
$L^{2} ( \Omega,\mathcal{G},P ) $, which gives the orthogonal
decomposition
\[
L^{2} ( \Omega,\mathcal{F},P ) =L^{2} ( \Omega,
\mathcal{G}%
,P ) \oplus L^{2} ( \Omega,\mathcal{G},P )
^{\perp}.
\]
A simple calculation then yields
%
%e4.11 #&#
\begin{eqnarray*}
\cov ( X,Y|\mathcal{G} )& =&E \bigl[ \cov ( X,Y|\mathcal {G} ) %
\bigr]
=E \bigl[ ( I-P_{\mathcal{G}} ) X ( I-P_{\mathcal
{G}} ) Y%
\bigr]
\\
&=&
\bigl\langle P_{\mathcal{G}}^{\perp}X,P_{\mathcal{G}}^{\perp
}Y
\bigr\rangle_{L^{2} ( \Omega ) },
\end{eqnarray*}
where $P_{\mathcal{G}}^{\perp}$ is the projection onto $L^{2} (
\Omega,\mathcal{G},P ) ^{\perp}$. In other words,
$\operatorname{Cov} ( X,Y|\mathcal{G} ) \geq0$ if and
only if
$\cos\theta\geq0$, where $\theta$ is the angle between the projections
$P_{\mathcal{G}}^{\perp}X$ and $P_{\mathcal{G}}^{\perp}Y$ of, respectively,
$X$ and $Y$ onto the orthogonal complement of $L^{2} ( \Omega
,\mathcal{G},P ) $.

%s5 #&#
\section{A Norris-type lemma}
\label{Norris lemma section}

In this section, we generalise a deterministic version of the Norris lemma,
obtained in \cite{H3} for $p$ rough paths with $1<p<3$, to general
$p>1$. It
is interesting to note that the assumption on the driving noise we make is
consistent with \cite{H3}. In particular, it still only depends on the
roughness of the basic path and not the rough path lift.

%s5.1 #&#
\subsection{Norris' lemma}
To simplify the notation, we will assume that $T=1$ in this subsection; all
the work will therefore be done on the interval $ [ 0,1 ] $. Our
Norris-type lemma relies on the notion of controlled process, which we proceed
to define now. Recall first the definition contained in \cite{Gu} for
second-order rough paths: whenever $\mathbf{x}\in C^{0,\gamma
}([0,1];G^{N}%
(\mathbb{R}^{d}))$ with $\gamma>1/3$, the space $\mathcal
{Q}_{\mathbf{x}
}(\mathbb{R})$ of controlled processes is the set of functions $y\in
C^{\gamma}([0,1];\mathbb{R})$ such that the increment $y_{st}$ can be
decomposed as
\[
y_{st}=y_{s}^{i}x_{s,t}^{i}+r_{s,t},
\]
where the remainder term $r$ satisfies $|r_{s,t}|\leq
c_{y}|t-s|^{2\gamma}$
and where we have used the summation over repeated indices convention. Notice
that $y$ has to be considered in fact as a vector $(y,y^{1},\ldots,y^{d})$.

In order to generalise this notion to lower values of $\gamma$, we
shall index
our controlled processes by words based on the alphabet $\{1,\ldots,d\}
$. To
this end, we need the following additional notation.

%no2 #&#
\begin{notation}
Let $w=(i_{1},\ldots,i_{n})$ and $\bar w=(j_{1},\ldots,j_{m})$ be two words
based on the alphabet $\{1,\ldots,d\}$. Then $|w|=n$ denotes the
length of
$w$, and $w\bar w$ stands for the concatenation $(i_{1},\ldots,i_{n}$,
$j_{1},\ldots,j_{m})$ of $w$ and $\bar w$. For $L\ge1$, $\mathcal{W}_{L}$
denotes the set of words of length at most $L$.
\end{notation}

Let us now turn to the definition of controlled process based on a
rough path.

%de5.1 #&#
\begin{definition}
\label{def:controlled-paths} Let $\mathbf{x}\in C^{0,\gamma}([0,1];G^{N}
(\mathbb{R}^{d}))$, with $\gamma>0$, $N=[1/\gamma]$. A~controlled
path based
on $\mathbf{x}$ is a family $(y^{w})_{w\in\mathcal{W}_{N-1}}$ indexed
by words
of length at most $N-1$, such that for any word $w\in\mathcal
{W}_{N-2}$ we
have
%
%e5.1 #&#
\begin{equation}
\qquad y_{s,t}^{w}=\sum_{\bar{w}\in\mathcal{W}_{N-1-|w|}}y_{s}^{w\bar{w}}%
\mathbf{x}_{s,t}^{\bar{w}}+r_{s,t}^{w}\qquad
\mbox{where } \bigl|r_{s,t}^{w}\bigr|\leq c_{y}|t-s|^{(N-|w|)\gamma}.
\label{eq:dcp-controlled-path}%
\end{equation}
In order to take the drift term of \eqref{Int-Eq} into account, we
also assume
that for $w=\varnothing$ we get a decomposition for the increment
$y_{s,t}$ of
the form
%
%e5.2 #&#
\begin{equation}\qquad
y_{s,t}=\sum_{\bar{w}\in\mathcal{W}_{N-1}}y_{s}^{\bar{w}}
\mathbf {x}%
_{s,t}^{\bar{w}}+ y_{s}^{0}
(t-s) + r_{st}^{y} \qquad\mbox{where }\bigl |r_{s,t}^{y}\bigr|
\leq c_{y}|t-s|^{N\gamma}. \label{eq:dcp-increment-y}%
\end{equation}
The set of controlled processes is denoted by $\mathcal{Q}_{\mathbf{x}
}^{\gamma}$, and the norm on $\mathcal{Q}_{\mathbf{x}}^{\gamma}$ is
given by
\[
\Vert y\Vert_{\mathcal{Q}_{\mathbf{x}}^{\gamma}}= \bigl\Vert y^{0}\bigr\Vert _{\gamma} + \sum
_{w\in\mathcal{W}_{N-1}}\bigl\Vert y^{w}\bigr\Vert_{\gamma}.
\]
\end{definition}

We next recall the definition of $\theta$-H\"{o}lder-roughness
introduced in
\cite{H3}.

%de5.2 #&#
\begin{definition}\label{def:rough}
Let $\theta\in ( 0,1 ) $. A path $x\dvtx [ 0,T ]
\rightarrow\mathbb{R}^{d}$ is called $\theta$-H\"{o}lder rough if
there exists
a constant $c>0$ such that for every $s$ in $ [ 0,T ] $, every
$\varepsilon$ in $(0,T/2]$, and every $\phi$ in $\mathbb{R}^{d}$ with
$\llvert
\phi\rrvert =1$, there exists $t$ in $ [ 0,T ] $ such that
$\varepsilon/2<\llvert  t-s\rrvert <\varepsilon$ and
\[
\bigl\llvert \langle\phi,x_{s,t} \rangle\bigr\rrvert >c\varepsilon
^{\theta}.
\]
The largest such constant is called the modulus of $\theta$-H\"{o}lder
roughness, and is denoted $L_{\theta} ( x ) $.
\end{definition}

A first rather straightforward consequence of this definition is that
if a
rough path $\mathbf{x}$ happens to be H\"{o}lder rough, then the
\emph{derivative processes} $y^{w}$ in the decomposition
\eqref{eq:dcp-controlled-path} of a controlled path $y$ is uniquely determined
by $y$. This can be made quantitative in the following way.

%pr5.3 #&#
\begin{proposition}
\label{prop:Nlemma1} Let $\mathbf{x}\in C^{0,\gamma}([0,1];G^{N}%
(\mathbb{R}^{d}))$, with $\gamma>0$ and $N=[1/\gamma]$. We also
assume that
$x$ is $\theta$-H\"{o}lder rough for some $\theta<2\gamma$. Let $y$
be a
real-valued controlled path defined as in
Definition~\ref{def:controlled-paths}, and set $\mathcal
{Y}_{n}(y)=\sup_{|w|=n}\Vert
y^{w}\Vert_{\infty}$ for $n\le N-1$. Then there exists a constant $M$
depending only on $d $
such that the bound
%
%e5.3 #&#
\begin{equation}
\mathcal{Y}_{n}(y)\leq \frac{M  (\|y\|_{\mathcal{Q}_{\mathbf{x}}^{\gamma}}
\mathcal{N}_{\mathbf{x}}  )^{{\theta}/{(2\gamma)}}
}{L_{\theta}(x)} \mathcal{Y}_{n-1}^{1-{\theta}/{(2\gamma)}}(y)
\label{eq:bnd-Yny}%
\end{equation}
holds for every controlled rough path $\mathcal{Q}_{\mathbf
{x}}^{\gamma
}$ and
every $1\le n \le N-1$.
\end{proposition}

\begin{pf}
For sake of clarity, we shall assume that $y^{0}=0$, leaving to the patient
reader the straightforward adaptation to a nonvanishing drift coefficient.
Now start from the decomposition \eqref{eq:dcp-controlled-path} and
recast it as
\[
y_{s,t}^{w}=\sum_{j=1}^{d}y_{s}^{wj}
\mathbf{x}_{s,t}^{(j)}+\sum_{2\leq
|\bar
{w}|\leq N-1-|w|}y_{s}^{w\bar{w}}
\mathbf{x}_{s,t}^{\bar{w}}+r_{s,t}^{w},
\]
where we have set $wj$ for the concatenation of the word $w$ and the word
$(j)$ for notational sake. This identity easily yields
%
%e5.4 #&#
\begin{eqnarray}\label{eq:dcp-controlled-path2}%
\sup_{|t-s|\leq\varepsilon}\Biggl\llvert \sum_{j=1}^{d}y_{s}^{wj}
\mathbf{x} %%
_{s,t}^{(j)}\Biggr\rrvert &\leq&2\bigl\Vert
y^{w}\bigr\Vert_{\infty}\nonumber\\
&&{}+\sum_{2\leq
|\bar
{w}|\leq N-1-|w|}\bigl\Vert
y^{w\bar{w}}\bigr\Vert_{\infty}\bigl\Vert\mathbf {x}^{\bar
{w}%
}
\bigr\Vert_{\gamma|\bar{w}|} \varepsilon^{|\bar{w}| \gamma}\\
&&\hspace*{76pt}{}+\bigl\Vert r^{w}%
\bigr\Vert_{\gamma(N-|w|)} \varepsilon^{(N-|w|) \gamma}. \nonumber
\end{eqnarray}
Since $x$ is $\theta$-H\"{o}lder rough by assumption,
there exists $v$, which is independent of j, with $\varepsilon/2\leq
|v-s|\leq\varepsilon$ such that
%
%e5.5 #&#
\begin{equation}
\Biggl\llvert \sum_{j=1}^{d}y_{s}^{wj}
\mathbf{x}_{s,v}^{(j)}\Biggr\rrvert >L_{\theta}(x)
\varepsilon^{\theta}\bigl| \bigl( y_{s}^{w1},
\ldots,y_{s}%
^{wd} \bigr) \bigr|.
\label{eqn:low-bnd-ywj}%
\end{equation}
Combining both \eqref{eq:dcp-controlled-path2} and \eqref
{eqn:low-bnd-ywj} for
all words $w$ of length $n-1$, we thus obtain that
%
%e5.6 #&#
%e5.7 #&#
\begin{eqnarray*}
\mathcal{Y}_{n}(y)&\leq&\frac{c}{L_{\theta}(x)}  \biggl[
\mathcal{Y}%
_{n-1}(y) \varepsilon^{-\theta}
\\
&&\hspace*{35pt}{} +\sup_{|w|=n-1} \biggl(\sum_{2\leq|\bar{w}|\leq N-1-|w|}
\bigl\Vert y^{w\bar
{w}%
}\bigr\Vert_{\infty}\bigl\Vert\mathbf{x}^{\bar{w}}
\bigr\Vert_{\gamma|\bar{w}%
|} \varepsilon^{|\bar{w}| \gamma-\theta}\\
&&\hspace*{151pt}{}+\bigl\Vert r^{w}
\bigr\Vert_{\gamma
(N-|w|)} \varepsilon^{(N-|w|) \gamma-\theta} \biggr) \biggr].
\end{eqnarray*}
Let us further simplify this relation by recalling that we take
supremums over words $w$ such that $|w|=n-1\le N-2$, so that $N-|w|\ge
2$, and we also consider words $\bar{w}$ whose length is at least $2$.
This yields
\[
\mathcal{Y}_{n}(y)\leq\frac{c}{L_{\theta}(x)} \bigl( \mathcal{Y}_{n-1}(y)
\varepsilon^{-\theta} + \|y\|_{\mathcal{Q}_{\mathbf{x}}^{\gamma}} \mathcal{N}_{\mathbf{x},\gamma}
\varepsilon^{2\gamma-\theta} \bigr).
\]
One can optimise the right-hand side of the previous inequality over
$\varepsilon$, by choosing $\varepsilon$ such that the term $\mathcal{Y}
_{n-1}(y) \varepsilon^{-\theta}$ is of the same order as $\mathcal
{N}_{\mathbf{x},\gamma} \varepsilon^{2\gamma-\theta}$. One then
verifies that our claim \eqref{eq:bnd-Yny} follows from
this elementary computation.
\end{pf}

%re5.4 #&#
\begin{remark}
Definition \ref{def:controlled-paths} and Proposition \ref
{prop:Nlemma1} can
be generalised to $d$-dimensional controlled processes. In
particular, if $y$ is a $d$-dimensional path, the decomposition
\eqref{eq:dcp-increment-y} becomes
%
%e5.8 #&#
\begin{equation}
y_{s,t}^{i}=\sum_{\bar{w}\in\mathcal{W}_{N-1}}y_{s}^{i,\bar
{w}}
\mathbf {x}%
_{s,t}^{\bar{w}}+r_{s,t}^{i,y}\qquad
\mbox{where }\bigl |r_{s,t}^{i,y}\bigr|\leq c_{y}|t-s|^{N\gamma}
\label{eq:dcp-increment-y-d-dim}%
\end{equation}
for all $i=1,\ldots,d$.
\end{remark}

We now show how the integration of controlled processes fits into the general
rough paths theory. For this, we will use the nonhomogeneous norm
$\mathcal{N}_{\mathbf{x},\gamma}=\mathcal{N}_{\mathbf{x},\gamma
, [
0,1 ] }$ introduced in (\ref{inhomogeneous}).

%pr5.5 #&#
\begin{proposition}
\label{prop:integral-ctrld-path} Let $y$ be a $d$-dimensional controlled
process, given as in Definition \ref{def:controlled-paths} and whose
increments can be written as in \eqref{eq:dcp-increment-y-d-dim}. Then
$(\mathbf{x},\mathbf{y})$ is a geometrical rough path in
$G^{N}(\mathbb
{R}%
^{2d})$. In particular, for $(s,t)\in\Delta^{2}$, the integral $I_{st}
\equiv\int_{s}^{t}y_{s}^{i} \,dx_{s}^{i}$ is well defined and admits the
decomposition
%
%e5.9 #&#
\begin{equation}
I_{s,t}=\sum_{j=1}^{d} \biggl(
y_{s}^{j}x_{s,t}^{j}+\sum
_{\bar{w}\in
\mathcal{W}_{N-1}}y_{s}^{\bar{w}}\mathbf{x}_{s,t}^{\bar{w}j}
\biggr) +r_{s,t}%
^{I}, \label{eq:dcp-Ist}%
\end{equation}
where $|r_{s,t}^{I}|\leq\mathcal{N}_{\mathbf{x}}\Vert\mathbf
{y}\Vert
_{\gamma
}|t-s|^{(N+1)\gamma}$.
\end{proposition}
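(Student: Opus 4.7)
The plan is to construct the integral $I_{s,t} = \int_s^t y_u^i\, dx_u^i$ via the sewing lemma, using the controlled-path decomposition of $y$ together with Chen's relation for $\mathbf{x}$, and then to read off the expansion \eqref{eq:dcp-Ist} from the very definition of the approximating germ. The joint rough path structure for $(\mathbf{x},\mathbf{y})$ in $G^N(\mathbb{R}^{2d})$ will then be obtained by constructing all cross-iterated integrals in the same manner.

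\textbf{Step 1: candidate germ.} For $(s,t)\in\Delta^2$ define the local approximation
\[
A_{s,t} := \sum_{j=1}^{d}\Bigl( y_s^{j}\, x_{s,t}^{j} + \sum_{\bar w\in\mathcal{W}_{N-1},\, |\bar w|\ge 1} y_s^{\bar w}\,\mathbf{x}_{s,t}^{\bar w j}\Bigr),
\]
where for $|\bar w|=0$ the inner term reduces to $y_s^j x_{s,t}^j$. This is the natural guess: it is the ``rough Taylor expansion'' of $y$ against $dx$ truncated at level $N$. It has H\"older regularity $\gamma$ in the sense that $|A_{s,t}|\lesssim |t-s|^{\gamma}$ by Definition~\ref{def:controlled-paths} and the definition of $\mathcal{N}_{\mathbf{x},\gamma}$.

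\textbf{Step 2: almost-additivity.} The heart of the argument is to show that, for $s<u<t$, the defect
\[
\delta A_{s,u,t} := A_{s,t} - A_{s,u} - A_{u,t}
\]
satisfies $|\delta A_{s,u,t}| \le C\, \mathcal{N}_{\mathbf{x},\gamma}\,\Vert \mathbf{y}\Vert_{\mathcal{Q}_{\mathbf{x}}^\gamma}\, |t-s|^{(N+1)\gamma}$. To see this, expand $y_u^{\bar w} = y_s^{\bar w} + y_{s,u}^{\bar w}$ in $A_{u,t}$ and use the controlled-path expansion \eqref{eq:dcp-controlled-path} of $y_{s,u}^{\bar w}$; on the rough-path side use Chen's relation
\[
\mathbf{x}_{s,t}^{w} = \sum_{w_1 w_2 = w} \mathbf{x}_{s,u}^{w_1}\otimes \mathbf{x}_{u,t}^{w_2}
\]
to recombine the resulting products of $\mathbf{x}_{s,u}^{\cdot}$ and $\mathbf{x}_{u,t}^{\cdot}$ with matching words. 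The terms indexed by words of total length $\le N$ cancel exactly (this is the shuffle/concatenation bookkeeping on $\mathcal{W}_{N-1}$); what survives are only the remainders $r^w$ from \eqref{eq:dcp-controlled-path} paired with increments of $\mathbf{x}$, producing the claimed $|t-s|^{(N+1)\gamma}$ bound since $(N-|w|)\gamma + |\bar w|\gamma \ge (N+1)\gamma$ whenever a remainder is involved.

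\textbf{Step 3: sewing and identification.} Because $(N+1)\gamma > 1$, the sewing lemma applies and produces a unique additive map $(s,t)\mapsto I_{s,t}$ with $|I_{s,t}-A_{s,t}| \lesssim \mathcal{N}_{\mathbf{x},\gamma}\Vert \mathbf{y}\Vert_{\mathcal{Q}_{\mathbf{x}}^\gamma} |t-s|^{(N+1)\gamma}$. This is exactly \eqref{eq:dcp-Ist} with the required bound on $r_{s,t}^I$. Consistency with the usual Riemann--Stieltjes integral when $x$ is smooth is immediate: in that case the tail sums in $A_{s,t}$ are the classical Taylor expansion of $\int_s^t y\, dx$.

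\textbf{Step 4: joint geometric rough path.} To obtain $(\mathbf{x},\mathbf{y})\in G^N(\mathbb{R}^{2d})$, repeat Steps~1--3 for every cross-iterated integral of order $\le N$ involving any combination of $dx^i$ and $dy^j$: for each such integral one writes down the analogous truncated germ using the controlled structure \eqref{eq:dcp-increment-y-d-dim} and the already-constructed lower-level integrals, and verifies almost-additivity by the same Chen--plus--remainder bookkeeping. Geometricity (i.e.\ that the resulting element lies in $G^N$ and not just $T^N$) follows from approximation: the germs $A$ are exactly the truncated integrals of the smooth paths obtained by piecewise-linear/mollified approximations of $\mathbf{x}$, which belong to $G^N(\mathbb{R}^{2d})$, and the sewing construction is continuous, so the limit is group-like.

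\textbf{Expected main obstacle.} The only nontrivial combinatorial point is the cancellation in Step~2: one must match, for each word $v$ with $|v|\le N$, the contribution of $y_s^{\bar w}\mathbf{x}_{u,t}^{\bar w j}$ (arising from expanding $y_u^{\bar w}=y_s^{\bar w}+y_{s,u}^{\bar w}$ in $A_{u,t}$) against the contribution of $y_s^{w'\bar w}\mathbf{x}_{s,u}^{w'}\mathbf{x}_{u,t}^{\bar w j}$ (arising from Chen's decomposition of $\mathbf{x}_{s,t}^{w' \bar w j}$ in $A_{s,t}$). Once the indexing is set up carefully using concatenation of words, these cancellations are forced by the very definition of a controlled path, and the remainder estimate drops out.
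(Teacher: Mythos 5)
Your proposal is correct in substance but follows a genuinely different route from the paper. The paper's own proof is a three-line approximation argument: approximate $x$ and $y$ by smooth paths $x^{m},y^{m}$ while preserving the controlled structure ($y^{m}\in\mathcal{Q}_{\mathbf{x}^{m}}$), note that the classical integrals $I^{m}_{s,t}$ satisfy the decomposition \eqref{eq:dcp-Ist}, and pass to the limit using \cite{Gu10}, which yields both the expansion and the geometric rough path structure of $(\mathbf{x},\mathbf{y})$ at once. You instead construct $I$ intrinsically by sewing the germ $A_{s,t}$, which buys a self-contained proof of the remainder bound (the exponent $(N+1)\gamma>1$ is exactly what the sewing lemma needs, since $N=[1/\gamma]$) without outsourcing the analytic work for the integral itself. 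Two caveats are worth recording. First, in your Step 2 the surviving terms are not only the remainders $r^{w}$: for top-level words with $|\bar{w}|=N-1$, Definition \ref{def:controlled-paths} provides no further expansion, so the bare increment $y^{\bar{w}}_{s,u}$ survives paired with $\mathbf{x}_{u,t}^{\bar{w}j}$, contributing a term of order $\Vert y^{\bar{w}}\Vert_{\gamma}\,\mathcal{N}_{\mathbf{x},\gamma}\,|u-s|^{\gamma}|t-u|^{N\gamma}$ (the analogue of the $y'_{s,u}\,\mathbb{X}_{u,t}$ term in the level-two Gubinelli computation); this is still $O(|t-s|^{(N+1)\gamma})$ and is covered by $\Vert\mathbf{y}\Vert_{\mathcal{Q}_{\mathbf{x}}^{\gamma}}$, so the estimate stands, but the bookkeeping should state it. Second, your Step 4 — the assertion that $(\mathbf{x},\mathbf{y})$ is a geometric rough path in $G^{N}(\mathbb{R}^{2d})$ — is where you fall back on exactly the paper's mechanism: iterated integrals involving several $dy$'s and the group-like (shuffle) property are obtained by approximating $(x,y)$ by smooth pairs preserving the controlled structure and using continuity of the construction, which is precisely the paper's (equally terse) argument; so for that half the two proofs coincide, and your added value is confined to the explicit sewing proof of \eqref{eq:dcp-Ist}.
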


\begin{pf}
Approximate $x$ and $y$ by smooth functions $x^{m},y^{m}$, while preserving
the controlled process structure (namely $y^{m}\in\mathcal
{Q}_{\mathbf
{x}^{m}%
}$). Then one can easily check that $(x^{m},y^{m})$ admits a signature, and
that $I_{s,t}^{m}\equiv\int_{s}^{t}y_{s}^{m,i} \,dx_{s}^{m,i}$ can be
decomposed as \eqref{eq:dcp-Ist}. Limits can then be taken thanks to
\cite{Gu10}, which completes the proof.
\end{pf}

The following theorem is a version of Norris' lemma, and constitutes
the main
result of this section.

%th5.6 #&#
\begin{theorem}
\label{thm:NlemmaRP} Let $\mathbf{x}$ be a geometric rough path of order
$N\geq1$ based on the $\mathbb{R}^{d}$-valued function $x$. We also assume
that $x$ is a $\theta$-H\"{o}lder rough path with $2\gamma>\theta$. Let
$y $
be a $\mathbb{R}^{d}$-valued controlled path of the form given in Definition
\ref{def:controlled-paths}, $b\in C^{\gamma}([0,1])$, and set
\[
z_{t}=\sum_{i=1}^{d}\int
_{0}^{t}y_{s}^{i}
\,dx_{s}^{i}+\int_{0}^{t}%
b_{s} \,ds=I_{st}+\int_{0}^{t}b_{s}
\,ds.
\]
Then there exist constants $r>0$ and $q>0$ such that, setting
%
%e5.10 #&#
\begin{equation}
\mathcal{R}=1+{L_{\theta}(x)}^{-1}+\mathcal{N}_{\mathbf{x},\gamma}%
+\Vert\mathbf{y}\Vert_{\mathcal{Q}_{\mathbf{x}}^{\gamma}}+\Vert b\Vert _{C^{\gamma}},
\label{eq:def-R}%
\end{equation}
one has the bound
\[
\Vert y\Vert_{\infty}+\Vert b\Vert_{\infty}\leq M\mathcal{R}^{q}
\Vert z\Vert_{\infty}^{r}
\]
for a constant $M$ depending only on $T$, $d$ and $y$.
\end{theorem}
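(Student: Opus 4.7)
I would prove this by interpreting $z$ itself as an $\mathbb{R}$-valued controlled path based on $\mathbf{x}$ and then applying a drift-aware version of Proposition~\ref{prop:Nlemma1}. By Proposition~\ref{prop:integral-ctrld-path}, the rough integral admits the decomposition
\begin{equation*}
I_{s,t} = \sum_{j=1}^d y^j_s\, x^j_{s,t} + \sum_{2\le|\bar w|\le N-1} \zeta^{\bar w}_s\, \mathbf{x}^{\bar w}_{s,t} + r^I_{s,t},
\end{equation*}
where each higher-order coefficient $\zeta^{\bar w}$ is, up to relabelling, one of the components $y^{w'}$ of $\mathbf{y}$, and the remainder satisfies $|r^I_{s,t}| \le \mathcal{N}_{\mathbf{x},\gamma}\, \|\mathbf{y}\|_{\mathcal{Q}_{\mathbf{x}}^\gamma}\, |t-s|^{(N+1)\gamma}$. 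Combining this with the Taylor expansion $\int_s^t b_u\,du = b_s(t-s) + O(\|b\|_{C^\gamma}|t-s|^{1+\gamma})$ exhibits $z$ as a controlled path of the form \eqref{eq:dcp-increment-y}, with drift $z^0 = b$, first-level derivatives $z^j = y^j$, and all other norms appearing in Definition~\ref{def:controlled-paths} polynomially bounded in $\mathcal{R}$.

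The first main step is to apply Proposition~\ref{prop:Nlemma1} to this controlled path at level $n=1$. The drift $b_s(t-s)$ only contributes a term of order $\varepsilon$ to the expansion \eqref{eq:dcp-controlled-path2}; since the proof of Proposition~\ref{prop:Nlemma1} optimises $\varepsilon$ against terms that are typically of lower H\"older exponent, this extra contribution is absorbed into the remainder and the same conclusion holds with the same exponents (the ``straightforward adaptation'' mentioned in its proof). Because $\mathcal{Y}_0(z) = \|z\|_\infty$, $\mathcal{Y}_1(z) = \|y\|_\infty$, and every quantity inside the parenthesis of \eqref{eq:bnd-Yny} is bounded by a polynomial in $\mathcal{R}$, this yields
\begin{equation*}
\|y\|_\infty \le M_1\, \mathcal{R}^{q_1}\, \|z\|_\infty^{r_1}, \qquad r_1 = 1 - \frac{\theta}{2\gamma} > 0.
\end{equation*}

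To recover $\|b\|_\infty$ I would fix $s \in [0,1)$, pick a scale $\varepsilon > 0$, and extract $b_s$ from the identity
\begin{equation*}
b_s\, \varepsilon = z_{s,s+\varepsilon} - I_{s,s+\varepsilon} - \int_s^{s+\varepsilon}(b_u - b_s)\,du.
\end{equation*}
Using the controlled-path expansion of $I_{s,s+\varepsilon}$ (applying the just-obtained bound on $\|y\|_\infty$ to the first-order term, $\|\mathbf{y}\|_{\mathcal{Q}_{\mathbf{x}}^\gamma} \le \mathcal{R}$ to the higher-order $\zeta^{\bar w}_s$ terms, and the explicit remainder estimate), dividing by $\varepsilon$ and optimising $\varepsilon$ against the scales $\|z\|_\infty$ and $\|y\|_\infty$ gives $\|b\|_\infty \le M_2\, \mathcal{R}^{q_2}\, \|z\|_\infty^{r_2}$ for some $r_2 > 0$, completing the proof.

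The delicate point is the second step: naively balancing the term $\|z\|_\infty/\varepsilon$ against $\mathcal{R}\,\|y\|_\infty\, \varepsilon^{\gamma-1}$ produces an exponent that is only positive once the decay $\|y\|_\infty \lesssim \mathcal{R}^{q_1}\|z\|_\infty^{r_1}$ from the first step is substituted, and the assumption $2\gamma > \theta$ is used precisely to ensure that the resulting $r_2$ is strictly positive. Tracking the precise polynomial powers of $\mathcal{R}$ through both steps is routine but somewhat tedious, and is the only other point at which care is required.
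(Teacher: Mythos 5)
Your Step 1 is exactly the paper's: view $z$ as a controlled path and apply Proposition \ref{prop:Nlemma1} with $n=1$ to get $\Vert y\Vert_{\infty}\le M\mathcal{R}^{\kappa}\Vert z\Vert_{\infty}^{1-\theta/2\gamma}$. The gap is in your Step 2, and it is genuine for every $N\ge 2$ (i.e.\ precisely the regime this theorem is meant to cover). In the expansion \eqref{eq:dcp-Ist} of $I_{s,s+\varepsilon}$ the intermediate terms $y_{s}^{\bar{w}}\mathbf{x}_{s,s+\varepsilon}^{\bar{w}j}$ with $2\le|\bar{w}j|\le N$ are bounded, with your choice $\Vert\mathbf{y}\Vert_{\mathcal{Q}_{\mathbf{x}}^{\gamma}}\le\mathcal{R}$, only by $\mathcal{R}^{\kappa}\varepsilon^{k\gamma}$ with $k\gamma\le 1$; after dividing by $\varepsilon$ they contribute $\mathcal{R}^{\kappa}\varepsilon^{k\gamma-1}$. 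If $\varepsilon$ is chosen as any positive power of $\Vert z\Vert_{\infty}$ these terms produce \emph{negative} powers of $\Vert z\Vert_{\infty}$, and if $\varepsilon$ is kept bounded below they produce no decay in $\Vert z\Vert_{\infty}$ at all; either way no admissible choice of $\varepsilon$ yields $r_{2}>0$. Moreover your parenthetical claim that exact balancing of $\Vert z\Vert_{\infty}/\varepsilon$ against $\mathcal{R}\Vert y\Vert_{\infty}\varepsilon^{\gamma-1}$ gives a positive exponent because $2\gamma>\theta$ is false: the balance gives exponent $1-\theta/(2\gamma^{2})$, which is positive only under the stronger condition $\theta<2\gamma^{2}$ (e.g.\ $\gamma=0.4$, $\theta=0.5$ satisfies $\theta<2\gamma$ but not $\theta<2\gamma^{2}$).

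The missing ingredient is the iteration of Proposition \ref{prop:Nlemma1}: applying \eqref{eq:bnd-Yny} for every $n\le N-1$ gives $\mathcal{Y}_{n}(y)\le M\Vert z\Vert_{\infty}^{(1-\theta/2\gamma)^{n}}\mathcal{R}^{\kappa}$ (the paper's \eqref{eq:bnd-Yny-z}), so that \emph{every} coefficient in \eqref{eq:dcp-Ist} carries a positive power of $\Vert z\Vert_{\infty}$; with these bounds your local argument can be repaired by taking $\varepsilon=\Vert z\Vert_{\infty}^{\lambda}$ for $\lambda>0$ sufficiently small (a sub-optimal choice) rather than the exact balance. The paper instead proceeds globally: it converts the sup-norm smallness of the $y^{w}$ into smallness of their $\beta$-H\"older norms for some $\beta<\gamma$ with $2\beta>\theta$ (interpolating $\Vert y^{w}\Vert_{\beta}\le 2\Vert y^{w}\Vert_{\gamma}^{\beta/\gamma}\Vert y^{w}\Vert_{\infty}^{1-\beta/\gamma}$, which is what controls the remainder $r^{I}$), deduces $\Vert I\Vert_{\infty}\le M\Vert z\Vert_{\infty}^{(1-\theta/2\gamma)^{N-1}(1-\beta/\gamma)}\mathcal{R}^{\kappa}$, hence the same bound for $\Vert\int_{0}^{\cdot}b_{s}ds\Vert_{\infty}$, and finally applies the interpolation inequality $\Vert\partial_{t}f\Vert_{\infty}\le M\Vert f\Vert_{\infty}\max\bigl(T^{-1},\Vert f\Vert_{\infty}^{-1/(\gamma+1)}\Vert\partial_{t}f\Vert_{\gamma}^{1/(\gamma+1)}\bigr)$ to $f=\int_{0}^{\cdot}b_{s}ds$; this last step is the correct form of your ``divide by $\varepsilon$ and optimise'' idea, and its exponent $\gamma/(\gamma+1)$ is automatically positive, with no extra condition on $\theta$ beyond $\theta<2\gamma$.
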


\begin{pf}
We shall divide this proof in several steps. In the following computations,
the symbol $\kappa$ will stand for an exponent for $\mathcal{R}$ and
$M$ will stand
for an arbitrary multiplicative constant. The exact values of these two
constants
are irrelevant and can change from line to line without warning.

\textit{Step \textup{1:} Bounds on $y$}. Combining
\eqref{eq:dcp-Ist}, the bound on $r^{I}$ given in
Proposition~\ref{prop:integral-ctrld-path} and the definition of
$\mathcal{R}%
$, we easily get the relation
%
%e5.11 #&#
\begin{equation}
\label{e:boundz} \|z\|_{\infty} \le M \mathcal{R}^{\kappa}.
\end{equation}
We now resort to relation \eqref{eq:bnd-Yny} applied to the controlled path
$z$ and for $n=1$, which means that $\mathcal{Y}_{n}(z)\asymp\|y\|
_{\infty}$
and $\mathcal{Y}_{n-1}(z)\asymp\|z\|_{\infty}$. With the definition of
$\mathcal{R}$ in mind, this yields the bound
%
%e5.12 #&#
\begin{equation}
\label{eq:bnd-norris1}\|y\|_{\infty} \le M \|z\|_{\infty}^{1-{\theta
}/{(2\gamma)}}
\mathcal{R}^{\kappa},
\end{equation}
which corresponds to our claim for $y$.

Along the same lines and thanks to relation \eqref{eq:bnd-Yny} for
$n>1$, we
iteratively get the bounds
%
%e5.13 #&#
\begin{equation}
\label{eq:bnd-Yny-z}\mathcal{Y}_{n}(y) \le M \|z\|_{\infty
}^{(1-{\theta}/{(2\gamma)})^{n}}
\mathcal{R}^{\kappa},
\end{equation}
which will be useful in order to complete the bound we have announced
for~$b$.

\textit{Step \textup{2:} Bounds on $r^{I}$ and $I$.} In order
to get
an appropriate bound on $r$, it is convenient to consider $\mathbf{x}$
as a
rough path with H\"older regularity $\beta<\gamma$, still satisfying the
inequality $2\beta>\theta$. Notice furthermore that $\mathcal{N}%
_{\mathbf{x},\beta}\le\mathcal{N}_{\mathbf{x},\gamma}$. Consider now
$w\in\mathcal{W}_{n}$. According to \eqref{eq:bnd-Yny-z}, we have
\[
\bigl\|y^{w}\bigr\|_{\infty} \le M \|z\|_{\infty}^{(1-{\theta}/{(2\gamma)
})^{n}%
}
\mathcal{R}^{\kappa},
\]
while $\|y^{w}\|_{\gamma}\le M \mathcal{R}$ by definition. Hence,
invoking the
inequality
\[
\bigl\|y^{w}\bigr\|_{\beta} \le2 \bigl\|y^{w}
\bigr\|^{{\beta}/{\gamma}}_{\gamma} \bigl\| y^{w}\bigr\|^{1
- {\beta}/{\gamma}}_{\infty}
,
\]
which follows immediately from the definition of the H\"older norm, we obtain
the bound
\[
\bigl\|y^{w}\bigr\|_{\beta} \le M \|z\|_{\infty}^{(1-{\theta}/{(2\gamma)}%
)^{n}(1-{\beta}/{\gamma})}
\mathcal{R}^{\kappa},
\]
which is valid for all $w\in\mathcal{W}_{n}$ and all $n\le N-1$.
Summing up,
we end up with the relation
\[
\|\mathbf{y}\|_{\beta} \le M \|z\|_{\infty}^{(1-{\theta
}/{(2\gamma)
})^{N-1}(1-{\beta}/{\gamma})}
\mathcal{R}^{\kappa}.
\]

Now according to Proposition \ref{prop:integral-ctrld-path}, we get
$r_{s,t}^{I}\leq\mathcal{N}_{\mathbf{x},\beta}\Vert\mathbf
{y}\Vert
_{\beta
}|t-s|^{(N+1)\beta}$ and the above estimate yields
\[
\bigl\Vert r^{I}\bigr\Vert_{(N+1)\beta}\leq M \Vert z\Vert_{\infty}^{(1-{\theta
}/{(2\gamma)})^{N-1}(1-{\beta}/{\gamma})}
\mathcal{R}^{\kappa}.
\]
Plugging this estimate into the decomposition \eqref{eq:dcp-Ist} of $I_{st}$
we end up with
%
%e5.14 #&#
\begin{equation}
\Vert I\Vert_{\infty}\leq M \Vert z\Vert_{\infty}^{(1-{\theta
}/{(2\gamma)
})^{N-1}(1-{\beta}/{\gamma})}
\mathcal{R}^{\kappa}. \label
{eq:bnd-I-infty}%
\end{equation}

\textit{Step \textup{3:} Bound on $b$.} Combining the bound
\eqref{eq:bnd-I-infty} with \eqref{e:boundz} and the fact that the exponent
of $\|z\|_\infty$ appearing in \eqref{eq:bnd-I-infty} is less than $1$,
we have
\[
\biggl\|\int_{0}^{\cdot}b_{s} \,ds
\biggr\|_{\infty}\leq M \Vert z\Vert _{\infty
}^{(1-{\theta}/{(2\gamma)})^{N-1}(1-{\beta}/{\gamma
})}
\mathcal{R}%
^{\kappa}.
\]
Once again, we use an interpolation inequality to strengthen this bound.
Indeed, we have (see \cite{HM11}, Lemma 6.14, for further details)
\[
\Vert\partial_{t}f\Vert_{\infty}\leq M\Vert f\Vert_{\infty}
\max \biggl({\frac{1}{T}},\Vert f\Vert_{\infty}^{-{{1}/{(\gamma+1)}}}\Vert
\partial_{t}f\Vert_{\gamma}^{{1}/{(\gamma+1)}} \biggr),
\]
and applying this inequality to $f_{t}=\int_{0}^{t}b_{s} \,ds$, it follows
that
%
%e5.15 #&#
\begin{equation}
\Vert b\Vert_{\infty}\leq M \Vert z\Vert_{\infty}^{(1-{\theta
}/{(2\gamma)
})^{N-1}(1-{\beta}/{\gamma})({\gamma}/{(\gamma
+1)})}
\mathcal {R}^{\kappa
}. \label{eq:bnd-norris2}%
\end{equation}
Gathering the bounds \eqref{eq:bnd-norris1} and \eqref{eq:bnd-norris2}, our
proof is now complete.
\end{pf}

%re5.7 #&#
\begin{remark}
One might be motivated to consider situations in which the drift and
the noise
have different natural parameterisations (see, e.g., the recent work
\cite{FS}). More precisely suppose $\mathbf{X}$ is a Gaussian rough
path in
$WG\Omega_{p} (
%TCIMACRO{\U{211d} }%
%BeginExpansion
\mathbb{R}
%EndExpansion
^{d} ) $ (with general $p$-variation regularity) and let $Y$ be the
solution to
\[
dY_{t}=V ( Y_{t} ) \,d\mathbf{X} + V_{0} (
Y_{t} ) \,dt, \qquad Y ( 0 ) =y_{0}.
\]
Then, as we have already observed in Remark \ref{remark f1}, we can
use the
parameterisation $\tau\dvtx  [ 0,T ] \rightarrow [
0,T ]$,
the inverse of $\sigma$ in (\ref{parametrisation}), to force $\tilde
{X}%
_{t}:=X_{\tau ( t ) }$ to have a H\"{o}lder-controlled covariance
function. This leads us to consider the solution to
%
%e5.16 #&#
\begin{equation}
d\tilde{Y}_{t}=V ( \tilde{Y}_{t} ) \,d\mathbf{\tilde{X}} +
V_{0} ( \tilde{Y}_{t} ) \,d\tau(t), \qquad\tilde{Y} ( 0 )
=y_{0},\label{repar}%
\end{equation}
whereupon $\tilde{Y}_{t}=Y_{\tau ( t ) }$. In particular, for
proving smoothness of the density of $Y_{T}(=\tilde{Y}_{T})$, one
needs never
to consider any parameterisation in which the noise is not of H\"{o}lder-type
regularity. This is a useful remark because Condition \ref{nondeterm}
explicitly involves the H\"{o}lder-parameterisation. To deal with the
situation presented by (\ref{repar}), one should adapt the previous
theorem to
accommodate RDEs of the form
\[
z_{t}=\sum_{i=1}^{d}\int
_{0}^{t}y_{s}^{i}
\,dx_{s}^{i}+\int_{0}^{t}b_{s}%
\,d\tau ( s ).
\]
\end{remark}

%s5.2 #&#
\subsection{Small-ball estimates for \texorpdfstring{${L}_{\theta}(X)$}{L_{theta}(X)}}

We now take $X$ to be a Gaussian process satisfying Condition \ref
{nondeterm}%
. As the reader might have noticed, equation \eqref{eq:def-R} above involves
the random variable $L_{\theta} ( X ) ^{-1}$, for which we will
need some tail estimates. The nondeterminism condition naturally gives rise
to such estimates as the following lemma makes clear.

%le5.8 #&#
\begin{lemma}
\label{sbp}Suppose $ ( X_{t} ) _{t\in [ 0,T ]
}$ is a
zero-mean, $%
%TCIMACRO{\U{211d} }%
%BeginExpansion
\mathbb{R}
%EndExpansion
^{d}$-valued, continuous Gaussian process with i.i.d. components, with each
component having a continuous covariance function $R$. Suppose that one (and
hence every) component of $X$ satisfies Condition \ref{nondeterm}. Let
$\alpha_{0}>0$ be the index of nondeterminism for $X$ and suppose
$\alpha
\geq\alpha_{0}$. Then there exist positive and finite constants
$C_{1}$ and
$C_{2}$ such that for any interval $I_{\delta}\subseteq [
0,T
] $
of length $\delta$ and $0<x<1$ we have
%
%e5.17 #&#
\begin{equation}
P \Bigl( \inf_{\llvert \phi\rrvert =1}\sup_{s,t\in
I_{\delta}%
}\bigl\llvert
\langle\phi,X_{s,t} \rangle\bigr\rrvert \leq x \Bigr) \leq
C_{1}\exp \bigl( -C_{2}\delta x^{-2/\alpha} \bigr).
\label
{sbpbound}%
\end{equation}
\end{lemma}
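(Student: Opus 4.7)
The plan is first to prove the bound for a single fixed direction $\phi\in S^{d-1}$, and then to upgrade to the infimum over the unit sphere via a finite $\eta$-net combined with a Gaussian tail estimate on $M:=\sup_{s,t\in I_\delta}|X_{s,t}|$.

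For the single-direction bound I would partition $I_\delta$ into $N$ equal subintervals of length $\varepsilon:=\delta/N$, with endpoints $\tau_0<\tau_1<\cdots<\tau_N$, and set $Y_i:=\langle\phi,X_{\tau_{i-1},\tau_i}\rangle$. Independence of the components of $X$ combined with Condition~\ref{non-determ} applied componentwise yields $\Var(Y_i\mid\mathcal{F}_{0,\tau_{i-1}}\vee\mathcal{F}_{\tau_i,T})\ge c\varepsilon^{\alpha}$, and since $\sigma(Y_j:j\ne i)$ is contained in this conditioning $\sigma$-algebra we also obtain $\Var(Y_i\mid Y_j,j\ne i)\ge c\varepsilon^{\alpha}$. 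Monotonicity of conditional variance then gives $\Var(Y_i\mid Y_1,\dots,Y_{i-1})\ge c\varepsilon^{\alpha}$, and hence $\det\mathrm{Cov}(Y_1,\dots,Y_N)\ge(c\varepsilon^{\alpha})^N$ by the Cholesky factorization. Bounding the Gaussian joint density by its maximum $(2\pi)^{-N/2}(\det\mathrm{Cov})^{-1/2}$ and integrating over $[-x,x]^N$ gives $P(\max_i|Y_i|\le x)\le(2x/\sqrt{2\pi c\varepsilon^{\alpha}})^{N}$. Since $\sup_{s,t\in I_\delta}|\langle\phi,X_{s,t}\rangle|\ge\max_i|Y_i|$, choosing $\varepsilon=Kx^{2/\alpha}$ with $K$ large enough to make the parenthesis less than $e^{-1}$ produces the fixed-direction bound $C_1\exp(-C_2\delta x^{-2/\alpha})$; the degenerate regime $\delta<Kx^{2/\alpha}$ is trivial after enlarging $C_1$.

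To pass to the infimum over $S^{d-1}$ I would take a minimal $\eta$-net $\mathcal{N}_\eta$ of cardinality at most $C_d\eta^{-(d-1)}$. For any $\phi\in S^{d-1}$ and closest $\phi'\in\mathcal{N}_\eta$ one has $\sup_{s,t}|\langle\phi',X_{s,t}\rangle|\le\sup_{s,t}|\langle\phi,X_{s,t}\rangle|+\eta M$. Splitting on $\{M\le m\}$ and applying the fixed-direction bound with $x+\eta m$ in place of $x$, the target probability is bounded by $P(M>m)+|\mathcal{N}_\eta|\cdot C_1\exp(-C_2'\delta(x+\eta m)^{-2/\alpha})$. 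For the Gaussian tail of $M$, the H\"older-controlled $\rho$-variation assumption together with the Fernique-type bound~\eqref{gauss} yields $M\le\|X\|_{1/p;[0,T]}\delta^{1/p}$ with $\|X\|_{1/p;[0,T]}$ having a Gaussian tail for any $p>2\rho$, so $P(M>m)\le C\exp(-cm^2\delta^{-2/p})$. Taking $\eta m=x$ and $m=A\delta^{1/2+1/p}x^{-1/\alpha}$ with $A$ sufficiently large then produces $P(M>m)\le C\exp(-cA^2\delta x^{-2/\alpha})$.

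The main obstacle is this final calibration: the net must be fine enough that the combinatorial factor $|\mathcal{N}_\eta|\le C_d(m/x)^{d-1}$ is absorbed into the target rate $\exp(-C_2\delta x^{-2/\alpha})$, yet coarse enough that $P(M>m)$ matches the same rate. The balance works because the index of non-determinism $\alpha$ governs the exponential rate while the H\"older regularity of $X$ (furnished by the $\rho$-variation of $R$) governs $M$, and the two scales interact compatibly through the single free parameter $m$. The resulting bound is non-trivial in the regime where $\delta x^{-2/\alpha}$ is at least a universal constant, and outside that regime the statement is vacuous after enlarging $C_1$.
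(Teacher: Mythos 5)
Your single-direction estimate is correct and is in substance the paper's own argument: the paper reduces to the first component via the identity $\langle\phi,X_t\rangle\overset{D}{=}X_t^1$ (you instead compute $\Var(Y_i\mid\cdot)=\sum_k\phi_k^2\Var(X^k_{\tau_{i-1},\tau_i}\mid\cdot)$ using independence, which is equivalent), rescales to $\delta=1$, discretises at scale $x^{2/\alpha}$ and conditions successively, the conditional variance being bounded below by Condition \ref{non-determ} together with monotonicity of conditional variance under enlarging the conditioning $\sigma$-algebra. Your determinant/maximum-of-density packaging of that step is a harmless variant of the paper's product of conditional probabilities.

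The gap is in the passage to the infimum over the sphere, which the paper delegates to a citation but which you attempt to calibrate explicitly. With $m=A\delta^{1/2+1/p}x^{-1/\alpha}$ and $\eta=x/m$, the net cardinality is of order $(m/x)^{d-1}=\bigl(A\,\delta^{1/2+1/p}x^{-1-1/\alpha}\bigr)^{d-1}$; writing $u=\delta x^{-2/\alpha}$ this equals $A^{d-1}u^{(\frac12+\frac1p)(d-1)}\,x^{-(1-\frac{2}{\alpha p})(d-1)}$, and since Condition \ref{non-determ} forces $\alpha\ge 1/\rho$ while $p>2\rho$, the exponent $1-\frac{2}{\alpha p}$ is strictly positive. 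Hence for $\delta=Kx^{2/\alpha}$ with $K$ a fixed large constant and $x\to0$, your union bound contributes at least $c\,x^{-(1-\frac{2}{\alpha p})(d-1)}e^{-c'K}$, which cannot be dominated by $C_1\exp(-C_2\delta x^{-2/\alpha})=C_1e^{-C_2K}$ with constants independent of $x$. Your fallback that ``outside that regime the statement is vacuous'' does not rescue this: vacuousness only holds when $\delta x^{-2/\alpha}$ is below a fixed constant, whereas the absorption fails throughout the window $\delta x^{-2/\alpha}\lesssim\log(1/x)$, where the claimed bound is a genuine (polynomially small in $x$) assertion. The culprit is the crude estimate $M\le\|X\|_{1/p;[0,T]}\,\delta^{1/p}$, which forces $m$ to scale like $\delta^{1/2+1/p}$; a concentration bound localised to $I_\delta$ (Borell's inequality combined with $\sup_{s,t\in I_\delta}\Var(X^1_{s,t})\le C\delta^{1/\rho}$, available from the H\"older-controlled $\rho$-variation) gives $P(M>m)\le C\exp(-cm^2\delta^{-1/\rho})$ and then $m/x\sim u^{\frac12+\frac{1}{2\rho}}x^{\frac{1}{\alpha\rho}-1}$, so the $x$-dependence disappears precisely when $\alpha\rho=1$ (Brownian motion, OU, bridge, fBm); in general the uniformity in $\delta$ still requires the compactness argument of \cite{N} and \cite{H3} that the paper invokes. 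Note that in the paper's only application of the lemma (Corollary \ref{l theta integrability}) one has $\delta x^{-2/\alpha}\gtrsim x^{-2/\alpha}$, so the problematic regime never occurs there, but the lemma as stated claims uniformity over all $\delta$ and your calibration does not deliver it.
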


\begin{pf}
The proof is similar to Theorem 2.1 of Monrad and Rootzen \cite
{monrad}; we
need to adapt it because our nondeterminism condition is different.

We start by introducing two simplifications. First, for any $\phi$ in $
%%
%TCIMACRO{\U{211d} }%
%BeginExpansion
\mathbb{R}
%EndExpansion
^{d}$ with $\llvert \phi\rrvert =1$, we have
%
%e5.18 #&#
\begin{equation}
\bigl( \langle\phi,X_{t} \rangle \bigr) _{t\in [
0,T ] }\stackrel{D}
{=} \bigl( X_{t}^{1} \bigr) _{t\in [
0,T ]
},
\label{distr}%
\end{equation}
which implies that
%
%e5.19 #&#
\begin{equation}
P \Bigl( \sup_{s,t\in I_{\delta}}\bigl\llvert \langle\phi
,X_{s,t}%
\rangle\bigr\rrvert \leq x \Bigr) =P \Bigl( \sup
_{s,t\in
I_{\delta}
}\bigl\llvert X_{s,t}^{1}\bigr\rrvert
\leq x \Bigr). \label{cmp}%
\end{equation}
We will prove that the this probability is bounded above by
\[
\exp \bigl( -c\delta x^{2/\alpha} \bigr)
\]
for a positive real constant $c$, which will not depend on $T$, $\delta
$ or
$x$. The inequality~(\ref{sbpbound}) will then follow by a well-known
compactness argument (see \cite{H3} and~\cite{N}). The second simplification
is to assume that $\delta=1$. We can justify this by working with the scaled
process
\[
\tilde{X}_{t}=\delta^{\alpha/2}X_{t/\delta},
\]
which is still a Gaussian process only now defined on the interval
$[0,\tilde{T}]:= [ 0,T\delta ]$. Furthermore, the scaled process
also satisfies Condition \ref{nondeterm} since
%
%e5.20 #&#
%e5.21 #&#
\begin{eqnarray*}
\Var ( \tilde{X}_{s,t}|\mathcal{\tilde{F}}_{0,s}\vee\mathcal
{\tilde {F}}%
_{t,\tilde{T}} ) &=&\delta^{\alpha}\Var (
X_{s/\delta
,t/\delta}|%
\mathcal{F}_{0,s/\delta}\vee
\mathcal{F}_{t/\delta,T} )
\\
&\geq& c\delta^{\alpha} \biggl( \frac{t-s}{\delta} \biggr) ^{\alpha}
=c ( t-s ) ^{\alpha}.
\end{eqnarray*}
Thus, if we can prove the result for intervals of length $1$, we can
deduce the bound on (\ref{cmp}) we want from the identity
\[
P \Bigl( \sup_{s,t\in I_{\delta}}\bigl\llvert X_{s,t}^{1}
\bigr\rrvert \leq x \Bigr) =P \biggl( \sup_{s,t\in I_{1}} \bigl|
\tilde{X}_{s,t}^{1}%
\bigr| \leq\frac{x}{\delta^{\alpha/2}}
\biggr).
\]

To complete the proof, we begin by defining the natural number
$n:=\lfloor
x^{-2/\alpha}\rfloor\geq1$ and the dissection $D ( I )
= \{
t_{i}\dvtx i=0,1,\ldots,n+1 \} $ of $I=I_{1}$, given by
%
%e5.22 #&#
%e5.23 #&#
\begin{eqnarray*}
t_{i} &=&\inf I+ix^{2/\alpha},\qquad i=0,1,\ldots,n,
\\
t_{n+1} &=&\inf I+1=\sup I.
\end{eqnarray*}
Then it is trivial to see that%
%
%e5.24 #&#
\begin{equation}
P \Bigl( \sup_{s,t\in I}\bigl\llvert X_{s,t}^{1}
\bigr\rrvert \leq x \Bigr) \leq P \Bigl( \max_{i=1,2,\ldots,n}
|X_{t_{i-1},t_{i}}^{1} |\leq x \Bigr). \label{discrete small ball}%
\end{equation}
To estimate (\ref{discrete small ball}), we successively condition on the
components of
\[
\bigl(X_{t_{0},t_{1}}^{1},\ldots,X_{t_{n-1},t_{n}}^{1}
\bigr).
\]
More precisely, the distribution of $X_{t_{n-1},t_{n}}^{1}$ conditional on
$(X_{t_{0},t_{1}}^{1},\ldots,\break X_{t_{n-2},t_{n-1}}^{1})$ is Gaussian
with a
variance $\sigma^{2}$. Condition \ref{nondeterm} ensures that
$\sigma
^{2}$ is
bounded below by~$cx^{2}$. When $Z$ is a Gaussian random variable with fixed
variance, $P ( \llvert  Z\rrvert \leq x ) $ will be maximised
when the mean is zero. We therefore obtain the following upper bound:
\[
P \Bigl( \sup_{s,t\in I}\bigl\llvert X_{s,t}^{1}
\bigr\rrvert \leq x \Bigr) \leq \biggl(\int_{-x/\sigma}^{x/\sigma}
\frac{1}{\sqrt{2\pi}}\exp \biggl( -\frac{1}{2}y^{2} \biggr) \,dy
\biggr)^{n}.
\]
Using $x/\sigma\leq\sqrt{c}$, we can finally deduce that
\[
P \Bigl( \sup_{s,t\in I}\bigl\llvert X_{s,t}^{1}
\bigr\rrvert \leq x \Bigr) \leq\exp ( -Cn ) \leq\exp \biggl( -\frac{Cx^{-2/\alpha
}}{2}
\biggr),
\]
where $C:=\log [ 2\Phi ( \sqrt{c} ) -1 ]
^{-1}\in (
0,\infty ) $.
\end{pf}

%re5.9 #&#
\begin{remark}
As well as \cite{monrad}, these small-ball estimates should be compared
to the
estimates obtained by Li and Linde in \cite{Li} and Molchan \cite{M}
in the
case of fractional Brownian motion.
\end{remark}

%co5.10 #&#
\begin{corollary}
\label{l theta integrability}Suppose $ ( X_{t} ) _{t\in
[
0,T ] }$ is a zero-mean, $%
%TCIMACRO{\U{211d} }%
%BeginExpansion
\mathbb{R}
%EndExpansion
^{d}$-valued, continuous Gaussian process with i.i.d. components satisfying
the conditions of Lemma~\ref{sbp}. Then for every $\theta>\alpha/2$,
the path
$ ( X_{t} ) _{t\in [ 0,T ] }$ is almost surely
$\theta
$-H\"{o}lder rough. Furthermore, for $0<x<1$, there exist positive finite
constants $C_{1}$ and $C_{2}$ such that the modulus of $\theta$-H\"{o}lder
roughness, $L_{\theta} ( X ) $, satisfies
\[
P \bigl( L_{\theta} ( X ) <x \bigr) \leq C_{1}\exp \bigl(
-C_{2}x^{-2/\alpha} \bigr).
\]
In particular, under these assumptions we have that $L_{\theta} (
X ) ^{-1}$ is in $\bigcap_{p>0}L^{p} ( \Omega ) $.
\end{corollary}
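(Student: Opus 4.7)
The plan is to combine Lemma \ref{sbp} with a dyadic cover of scales and positions, followed by a single union bound. The starting observation is that if $L_\theta(X) < x$ for some $x \in (0,1)$, then by definition there exist $s \in [0,T]$, $\varepsilon \in (0, T/2]$ and a unit vector $\phi \in \mathbb{R}^d$ for which $|\langle \phi, X_{s,t}\rangle| \leq x \varepsilon^\theta$ whenever $\varepsilon/2 < |t-s| < \varepsilon$. Writing $\langle \phi, X_{u,v}\rangle = \langle \phi, X_{s,v}\rangle - \langle \phi, X_{s,u}\rangle$, this yields $\sup_{u,v \in J}|\langle \phi, X_{u,v}\rangle| \leq 2 x \varepsilon^\theta$ for any subinterval $J \subseteq [s+\varepsilon/2, s+\varepsilon]$, and hence also $\inf_{|\phi|=1}\sup_{u,v \in J}|\langle \phi, X_{u,v}\rangle| \leq 2 x \varepsilon^\theta$.

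Next I would introduce, for each $n \in \mathbb{N}$ with $2^{-n} \leq T/2$, a finite family $\{I_{n,k}\}_{k}$ of intervals in $[0,T]$ of common length $2^{-n-3}$, arranged on a grid of mesh $2^{-n-4}$, so that any subinterval of $[0,T]$ of length at least $2^{-n-2}$ contains some $I_{n,k}$, with total count $O(T\, 2^n)$. For any bad pair $(s, \varepsilon)$ one locates $n$ via $2^{-n-1} < \varepsilon \leq 2^{-n}$; the interval $[s+\varepsilon/2, s+\varepsilon]$ has length at least $2^{-n-2}$, hence contains some $I_{n,k}$, and the previous paragraph gives
\[
\{L_\theta(X) < x\} \subseteq \bigcup_{n,k} \Bigl\{\inf_{|\phi|=1}\sup_{u,v \in I_{n,k}} |\langle \phi, X_{u,v}\rangle| \leq 4 x\, 2^{-n\theta}\Bigr\}.
\]
Applying Lemma \ref{sbp} with $\delta = 2^{-n-3}$ and threshold $4x\, 2^{-n\theta}$ (which lies in $(0,1)$ for $x < 1/4$, an assumption that can be made without loss of generality) bounds each event by $C_1 \exp\bigl(-C_3\, x^{-2/\alpha}\, 2^{n\eta}\bigr)$, where $\eta := 2\theta/\alpha - 1 > 0$ by the hypothesis $\theta > \alpha/2$. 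Taking a union bound over the $O(2^n)$ intervals at scale $n$ and summing over $n$ yields $P(L_\theta(X) < x) \leq C_1 \exp(-C_2\, x^{-2/\alpha})$ after adjusting constants, since the stretched-exponential factor in $n$ dominates the polynomial $2^n$ and the resulting series is controlled by its leading term.

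The two conclusions of the corollary then follow: almost sure $\theta$-Hölder roughness comes from $P(L_\theta(X) = 0) = \lim_{x \downarrow 0} P(L_\theta(X) < x) = 0$, while the integrability $L_\theta(X)^{-1} \in \bigcap_{p>0} L^p(\Omega)$ is immediate from the Weibull-type tail $P(L_\theta(X)^{-1} > y) \leq C_1 \exp(-C_2\, y^{2/\alpha})$ obtained via $y = 1/x$. I expect the main technical hurdle to be the bookkeeping for the dyadic cover: arranging the indexing so that every continuous pair $(s, \varepsilon)$ is captured by one of the finitely many discrete events at scale $n$, while keeping the cardinality small enough that the union bound converges to a bound of the asserted form. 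Once that combinatorial step is in place, the rest is a direct application of Lemma \ref{sbp} together with a geometric-type summation exploiting $\eta > 0$.
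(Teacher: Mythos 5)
Your argument is correct and essentially the same as the paper's: there, the covering step is not constructed by hand but quoted from \cite{H3} as the deterministic bound $L_{\theta}(X)\ge \tfrac{1}{2\cdot 8^{\theta}}D_{\theta}(X)$, with $D_{\theta}$ defined over the dyadic intervals $I_{k,n}$, after which the proof is exactly your union bound over scales, an application of Lemma \ref{sbp}, and the summation exploiting $\theta>\alpha/2$. The only detail to add to your reduction is that when $s+\varepsilon>T$ you must use the left window $[s-\varepsilon,s-\varepsilon/2]$ instead of $[s+\varepsilon/2,s+\varepsilon]$ (one of the two always lies in $[0,T]$ because $\varepsilon\le T/2$), and your grid of intervals handles that case verbatim.
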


\begin{pf}
The argument of \cite{H3} applies in exactly the same way to show that
$L_{\theta} ( X ) $ is bounded below by
\[
\frac{1}{2\cdot8^{\theta}}D_{\theta} ( X ),
\]
where
\[
D_{\theta} ( X ):=\inf_{\llVert \phi\rrVert
=1}\inf_{n\geq1}
\inf_{k\leq2^{n}}\sup_{s,t\in I_{k,n}}\frac{\llvert
\langle
\phi,X_{s,t} \rangle\rrvert }{ ( 2^{-n}T )
^{\theta}}
\]
and $I_{k,n}:= [  ( k-1 ) 2^{-n}T,k2^{-n}T ] $.
We can
deduce that for any $x\in ( 0,1 ) $
\[
P \bigl( D_{\theta} ( X ) <x \bigr) \leq\sum_{n=1}^{\infty}%
\sum_{k=1}^{2^{n}}P \biggl(\inf
_{\llVert \phi\rrVert
=1}\sup_{s,t\in
I_{k,n}}\frac{\llvert  \langle\phi,X_{s,t} \rangle
\rrvert
}{ ( 2^{-n}T ) ^{\theta}}<x
\biggr),
\]
whereupon we can apply Lemma \ref{sbp} to yield
\[
P \bigl( D_{\theta} ( X ) <x \bigr) \leq c_{1}\sum
_{n=1}^{\infty
}2^{n}\exp \bigl(
-c_{2}2^{-n ( 1-2\theta/\alpha )
}T^{-2\theta
/\alpha}x^{-2/\alpha} \bigr).
\]
By exploiting the fact that $\theta>\alpha/2$, we can then find positive
constants $c_{3}$ and $c_{4}$ such that
%
%e5.25 #&#
%e5.26 #&#
\begin{eqnarray*}
P \bigl( D_{\theta} ( X ) <x \bigr) &\leq &c_{3}\sum
_{n=1}^{\infty
}\exp \bigl( -c_{4}nx^{-2/\alpha}
\bigr) =c_{3}\frac{\exp ( -c_{4}x^{-2/\alpha} ) }{1-\exp (
-c_{4}x^{-2/\alpha} ) }
\\
&\leq& c_{5}\exp \bigl( -c_{4}x^{-2/\alpha} \bigr),
\end{eqnarray*}
which completes the proof.
\end{pf}

%s6 #&#
\section{An interpolation inequality}
\label{interpol}

Under the standing assumptions on the Gaussian process, the \textit{Malliavin
covariance matrix} of the random variable $U_{t\leftarrow0}^{\mathbf
{X}%
} ( y_{0} ) \equiv Y_{t}$ can be represented as a 2D Young
integral (see \cite{CFV})%
%
%e6.1 #&#
\begin{equation}
C_{t}=\sum_{i=1}^{d}\int
_{ [ 0,t ] ^{2}}J_{t\leftarrow
s}^{\mathbf{X}} ( y_{0} )
V_{i} ( Y_{s} ) \otimes J_{t\leftarrow s^{\prime}}^{\mathbf{X}} (
y_{0} ) V_{i} ( Y_{s^{\prime}} ) \,dR \bigl(
s,s^{\prime} \bigr). \label{eq:def-malliavin-matrix}%
\end{equation}

In practice, showing the smoothness of the density boils down to getting
integrability estimates on the inverse of $\inf_{\llVert  v\rrVert
=1}v^{T}C_{T}v$, the smallest eigenvalue of~$C_{T}$. For this reason,
we will
be interested in
\[
v^{T}C_{T}v=\sum_{i=1}^{d}
\int_{ [ 0,T ] ^{2}} \bigl\langle v,J_{t\leftarrow s}^{\mathbf{X}} (
y_{0} ) V_{i} ( Y_{s} ) \bigr\rangle \bigl
\langle v,J_{t\leftarrow s^{\prime}%
}^{\mathbf{X}} ( y_{0} ) V_{i} (
Y_{s^{\prime}} ) \bigr\rangle \,dR \bigl( s,s^{\prime} \bigr).
\]
We will return to study the properties of $C_{T}$ more extensively in Section~\ref{section proof main theorem}. For the moment, we look to
generalise this
perspective somewhat. Suppose $f\dvtx [ 0,T ] \rightarrow%
%TCIMACRO{\U{211d} }%
%BeginExpansion
\mathbb{R}
%EndExpansion
$ is some (deterministic) real-valued H\"{o}lder-continuous function, where
$\gamma$ is Young-complementary to $\rho$, 2D-variation regularity of
$R$. Our
aim in this section is elaborate on the nondegeneracy of the 2D Young
integral%
%
%e6.2 #&#
\begin{equation}
\int_{ [ 0,T ] }f_{s}f_{t}\,dR ( s,t ).
\label
{1st2D}%
\end{equation}
More precisely, what we want is to use Conditions \ref{nondeterm} and
\ref{cond dom} to give a quantitative version of the nondegeneracy
statement
%
%e6.3 #&#
\begin{equation}
\int_{ [ 0,T ] }f_{s}f_{t}\,dR ( s,t ) =0
\quad\Rightarrow \quad f\equiv0. \label{cfv}%
\end{equation}
To give an idea of the type of estimate we might aim for, consider the case
where $R\equiv R^{\mathrm{BM}}$ is the covariance function of Brownian motion.
The 2D
Young integral~(\ref{1st2D}) then collapses to the square of the $L^{2}$-norm
of $f$:%
%
%e6.4 #&#
\begin{equation}
\biggl\llvert \int_{ [ 0,T ] }f_{s}f_{t}\,dR^{\mathrm{BM}}
( s,t ) \biggr\rrvert =\llvert f\rrvert _{L^{2} [ 0,T ] }^{2},
\label{bmcov}%
\end{equation}
and the interpolation inequality (Lemma A3 of \cite{HP}) gives%
%
%e6.5 #&#
\begin{equation}
\llVert f\rrVert _{\infty; [ 0,T ] }\leq2\max \bigl( T^{-1/2}\llvert f
\rrvert _{L^{2} [ 0,T ] },\llvert f\rrvert _{L^{2} [ 0,T ] }^{2\gamma/ ( 2\gamma
+1 )
}\llVert
f\rrVert_{\gamma\mbox{\fontsize{8.36pt}{10pt}\selectfont{-H\"{o}l}}; [0,T ]}^{1/( 2\gamma+1 )}\bigr). \label{itl}%
\end{equation}
Therefore, in the setting of Brownian motion at least, (\ref{itl}) and
(\ref{bmcov}) quantifies~(\ref{cfv}). The problem is that the proof of
(\ref{itl}) relies heavily properties of the $L^{2}$-norm, in
particular, we
use the fact that
\[
\mbox{if }f ( u ) \geq c>0\mbox{ for all }u\in [ s,t ] \mbox{ then }\llvert f
\rrvert _{L^{2} [ s,t ]
}\geq c ( t-s ) ^{1/2}.
\]
We cannot expect for this to naively generalise to inner products resulting
from other covariance functions. We therefore have to re-examine the
proof of
the inequality (\ref{itl}) with this generalisation in mind.

It is easier to first consider a discrete version of the problem.
Suppose $D$
is some (finite) partition of $ [ 0,T ] $. Then the Riemann sum
approximation to (\ref{1st2D}) along $D$ can be written as
\[
f ( D ) ^{T}Qf ( D ),
\]
where $Q$ is the matrix (\ref{increment matrix}) and $f (
D ) $
the vector with entries given by the values of $f$ at the points in the
partition. The next sequence of results is aimed at addressing the
following question.

%
%pr6.1 #&#
\begin{problem}
Suppose $\llvert  f\rrvert _{\infty; [ s,t ] }\geq
1$ for some
interval $ [ s,t ] \subseteq [ 0,T ] $. Can we
find a
positive lower bound $f ( D ) ^{T}Qf ( D ) $
which holds
uniformly over some sequence of partitions whose mesh tends to zero?
\end{problem}

To take a first step toward securing an answer, let $D= \{
t_{i}\dvtx i=0,1,\ldots,n \} $ and define
\[
Z:= ( Z_{1},\ldots,Z_{n} ):= ( X_{t_{0},t_{1}},\ldots,X_{t_{n-1}
,t_{n}}
) \sim N ( 0,Q ).
\]
Suppose that $Q$ has the block decomposition
\[
Q=\pmatrix{
Q_{11} &
Q_{12}
\vspace*{2pt}\cr
Q_{12}^{T} & Q_{22}}\qquad\mbox{with } Q_{11}\in\mathbb{R}^{k,k},Q_{12}
\in \mathbb{R}^{k,n-k},Q_{22}\in\mathbb{R}^{n-k,n-k}.
\]
In other words, $Q_{11}$ is the covariance matrix of $ ( Z_{1}%
,\ldots,Z_{k} ) $ and $Q_{22}$ is the covariance matrix of $ (
Z_{k+1},\ldots,Z_{n} ) $.  We are interested in finding the infimum
of the
quadratic form $x^{T}Qx$ over the subset
\[
\bigl\{ ( x_{1},\ldots,x_{n} ) \in%
%TCIMACRO{\U{211d} }%
%BeginExpansion
\mathbb{R} %EndExpansion
^{n}\dvtx x_{j}\geq b,\forall
j=k+1,\ldots,n \bigr\},
\]
where $b>0$. To simplify the problem, we recall that the description of the
condition distribution
\[
( Z_{k+1},\ldots,Z_{n} ) |\sigma ( Z_{1},\ldots,Z_{k}
) \sim N ( \bar{\mu},\bar{Q} ),
\]
where the mean and covariance are given by
\[
\bar{\mu}=Q_{12}^{T}Q_{11}^{-1} (
Z_{1},\ldots,Z_{k} ) ^{T},\qquad
\bar{Q}=Q_{22}-Q_{12}^{T}Q_{11}^{-1}Q_{12}.
\]
$\bar{Q}$ is the so-called \textit{Schur complement} of $Q_{11}$ in
$Q$. It
follows that $x_{1}Z_{1}+x_{2}Z_{2}+\cdots+x_{n}Z_{n}| ( Z_{k+1}%
,\ldots,Z_{n} ) \sim N ( \sum_{i=1}^{k}x_{i}Z_{i}+\sum_{i=k+1}
^{n}x_{i}\bar{\mu}_{i}$, $\sum_{i,j=1}^{k}x_{i}\bar
{Q}_{i,j}x_{j} )$, and
hence
\begin{eqnarray*}
E \bigl[ ( x_{1}Z_{1}+\cdots+x_{n}Z_{n}
) ^{2} \bigr] & =&E \bigl[ E \bigl[ ( x_{1}Z_{1}+x_{2}Z_{2}+\cdots+x_{n}Z_{n}
) ^{2}%
|\sigma ( Z_{1},\ldots,Z_{k} )
\bigr] \bigr]
\\
& =&\sum_{i,j=1}^{k}x_{i}
\bar{Q}_{i,j}x_{j}+E \Biggl[ \Biggl( \sum
_{i=1}%
^{k}x_{i}\bar{
\mu}_{i}+\sum_{i=k+1}^{n}x_{i}Z_{i}
\Biggr) ^{2} \Biggr].
\end{eqnarray*}
We may always choose the unconstrained variables $x_{1},\ldots,x_{k}$  in order
that the second term is zero, therefore,
%
%e6.6 #&#
\begin{equation}
\qquad\inf_{x_{k+1}\geq b,\ldots,x_{n}\geq b}E \bigl[ ( x_{1}Z_{1}+\cdots+x_{n}%
Z_{n} ) ^{2} \bigr] =\inf_{x_{k+1}\geq b,\ldots,x_{n}\geq b}\sum
_{i,j=1}^{k}x_{i}
\bar{Q}_{i,j}x_{j}. \label{qp}%
\end{equation}
At first glance, it may appear that the minimiser in the right-hand
side is
$ ( x_{k+1},\ldots,x_{n} ) = ( b,\ldots,b )$, but this
is not
always true.\setcounter{footnote}{3}\footnote{For example, suppose $b=1$ and $\bar{Q}$ is the
$2\times2$
positive definite, symmetric matrix given by $\bar{Q}=\bigl(
{5 \atop -2}\enskip
{-2 \atop 1}
\bigr) $. Then $ ( 1,1 ) \bar{Q} ( 1,1 )
^{T}=2$, but
$ ( 1,1.1 ) \bar{Q} ( 1,1.1 ) ^{T}=1.8$.} The following
lemma, however, gives a simple condition on $\bar{Q}$ which ensures
that it is.

%le6.2 #&#
\begin{lemma}
\label{QP lemma}Let $b>0$ and $\mathbf{b}$ in $%
%TCIMACRO{\U{211d} }%
%BeginExpansion
\mathbb{R}
%EndExpansion
^{n}$ denote the vector $ ( b,\ldots,b ) $. Suppose $ (
\bar{Q}_{ij} ) _{i,j\in \{ 1,2,\ldots,n \} }$ is a real
$n\times n$ positive definite matrix and assume $\bar{Q}$ has
nonnegative row
sums, that is,
%
%e6.7 #&#
\begin{equation}
\sum_{j=1}^{n}\bar{Q}_{ij}\geq0\qquad
\mbox{for all }i\in \{ 1,\ldots,n \}. \label{feas}%
\end{equation}
Then the infimum of the quadratic form $x^{T}\bar{Q}x$ over the subset
\[
\mathcal{C}= \bigl\{ ( x_{1},\ldots,x_{n} ) \in%
%TCIMACRO{\U{211d} }%
%BeginExpansion
\mathbb{R} %EndExpansion
^{n}\dvtx x_{j}
\geq b\mathbf{,\forall}j=1,\ldots,n \bigr\}
\]
is attained at $x=\mathbf{b}$, and hence
\[
\inf_{x\in\mathcal{C}}x^{T}\bar{Q}x=\mathbf{b}^{T}
\bar{Q}\mathbf {b=}b^{2}%
\sum_{i,j=1}^{n}
\bar{Q}_{ij}.
\]
\end{lemma}

\begin{pf}
Without loss of generality, we may assume that $b=1$. We can then reformulate
the statement as describing the smallest value for the following constrained
quadratic programming problem:
\[
\min x^{T}\bar{Q}x\qquad \mbox{subject to } x\geq\mathbf{1},
\]
where $\mathbf{1}:= ( 1,\ldots,1 ) \in%
%TCIMACRO{\U{211d} }%
%BeginExpansion
\mathbb{R}
%EndExpansion
^{n}$ and $x\geq\mathbf{1}$ means $x_{i}\geq\mathbf
{1}_{i}=1,\mathbf
{\forall
}i=1,\ldots,n$. The Lagrangian function of this quadratic programming problem
(see, e.g., \cite{boyd}, page 215) is given by
\[
L(x,\lambda)=x^{T}\bar{Q}x+\lambda^{T} ( -x+\mathbf{1} ).
\]
Solving for
\[
\nabla_{x}L(x,\lambda)=2\bar{Q}x-\lambda=0
\]
and using the strict convexity of the function we deduce that $x^{\ast}
=\frac{1}{2}Q^{-1}\lambda$ is the minimiser of $L$. Hence, the (Lagrangian)
dual function $g ( \lambda ):=\inf_{x}L ( x,\lambda
) $
is given by
\[
g ( \lambda ) =-\tfrac{1}{4}\lambda^{T}\bar {Q}^{-1}
\lambda +\lambda^{T}\mathbf{1}%
\]
and the dual problem consists of
\[
\max g ( \lambda )\qquad \mbox{subject to } \lambda\geq0.
\]
As $Q^{-1}$ is positive definite the function $g$ is strictly concave
and the
local maximum $\lambda^{\ast}=2\bar{Q}\mathbf{1}$ that is obtained
by solving
$\nabla_{\lambda}g ( \lambda ) =0$ with
%
%e6.8 #&#
\begin{equation}
\nabla_{\lambda}g ( \lambda ) =-\tfrac{1}{2}\bar{Q}^{-1}%
\lambda+\mathbf{1} \label{dual1}%
\end{equation}
is also the unique global maximum. In order to prove that $\lambda
^{\ast}$
solves the dual problem, we therefore need only check that it is
feasible for
the dual problem, that is, we must show that $\lambda^{\ast}\geq
\mathbf
{0}$. But
since the components of $\lambda^{\ast}$ are just twice the sum of the
respective rows of $\bar{Q}$, this feasibility condition follows at
once from
assumption \ref{feas}.
\end{pf}

When $Q$ arises as the covariance matrix of the increments of a Gaussian
process, we need to know when the Schur complement of some sub-block of $Q$
will satisfy condition (\ref{feas}). In the context of Gaussian
vectors, these
Schur complements have a convenient interpretation; they are the covariance
matrices which result from partially conditioning on some of the components.
It is this identification which motivates the positive conditional covariance
condition (Condition \ref{cond dom}).

In order to present the proof of the interpolation inequality as transparently
as possible, we first gather together some relevant technical comments. To
start with, suppose we have two sets of real numbers
\[
D= \{ t_{i}\dvtx i=0,1,\ldots,n \} \subset\tilde{D}= \{ \tilde
{t}_{i}\dvtx i=0,1,\ldots,\tilde{n} \} \subseteq [ 0,T ]
\]
ordered in such a way that $0\leq t_{0}<t_{1}<\cdots<t_{n}\leq T$, and
likewise for $\tilde{D}$. Suppose $s$ and $t$ be real numbers with
$s<t$ and
let $Z$ be a continuous Gaussian process. We need to consider how the variance
of the increment $Z_{s,t}$ changes when we condition on
\[
\mathcal{F}^{D}:=\sigma ( Z_{t_{i-1},t_{i}}\dvtx i=1,\ldots,n ),
\]
compared to conditioning the larger $\sigma$-algebra
\[
\mathcal{F}^{\tilde{D}}:=\sigma ( Z_{\tilde{t}_{i-1},\tilde
{t}_{i}%
}\dvtx i=1,\ldots,\tilde{n} ).
\]
To simplify the notation a little, we introduce
\[
\mathcal{G}=\sigma \bigl( Z_{\tilde{t}_{i-1},\tilde{t}_{i}}\dvtx \{ \tilde {t}_{i-1},
\tilde{t}_{i} \} \cap\tilde{D}\setminus D \neq \varnothing \bigr)
,
\]
so that
\[
\mathcal{F}^{\tilde{D}}=\mathcal{F}^{D}\vee\mathcal{G}.
\]
Because
%
%e6.9 #&#
\begin{equation}
( Z_{s,t},Z_{t_{0},t_{1}},\ldots,Z_{t_{\tilde{n}-1},t_{\tilde
{n}}%
} ) \in%
%TCIMACRO{\U{211d} }%
%BeginExpansion
\mathbb{R} %EndExpansion
^{\tilde{n}+1}
\label{info}%
\end{equation}
is Gaussian, the joint distribution of $Z_{s,t}$ and the vector (\ref{info})
conditional on $\mathcal{F}^{D}$ (or indeed $\mathcal{F}^{\tilde{D}}$)
is once
again Gaussian, with a random mean but a deterministic covariance
matrix. A
simple calculation together with the \textit{law of total variance}
gives that
%
%e6.10 #&#
%e6.11 #&#
\begin{eqnarray*}
\Var \bigl( Z_{s,t}|\mathcal{F}^{D} \bigr) &=&E \bigl[ \Var
\bigl( Z_{s,t}|%
\mathcal{F}^{D}\vee\mathcal{G}
\bigr) \bigr] +\Var \bigl( E \bigl[ Z_{s,t}|%
\mathcal{F}^{D}\vee\mathcal{G} \bigr] \bigr)
\\
&\geq &E \bigl[ \Var \bigl( Z_{s,t}|\mathcal{F}^{D}\vee
\mathcal {G} \bigr) %%
\bigr] =\Var \bigl( Z_{s,t}|
\mathcal{F}^{\tilde{D}} \bigr),
\end{eqnarray*}
which is the comparison we sought. We condense these observations
into the following lemma.

%le6.3 #&#
\begin{lemma}
\label{monotone}Let $ ( Z_{t} ) _{t\in [ 0,T ]
}$ be a
Gaussian process, and suppose that $D$ and $\tilde{D}$ are two
partitions of
$ [ 0,T ] $ with $D\subseteq\tilde{D}$. Then for any
$ [
s,t ] \subseteq [ 0,T ] $ we have
\[
\operatorname{Var} \bigl( Z_{s,t}|\mathcal{F}^{D} \bigr)
\geq \operatorname{Var} \bigl( Z_{s,t}|\mathcal{F}^{\tilde{D}}
\bigr).
\]
\end{lemma}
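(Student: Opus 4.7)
The plan is to reduce the statement to the law of total variance together with the fact that, for jointly Gaussian random variables, the conditional variance of one component given the others is deterministic. The proof is essentially assembled in the paragraph preceding the lemma statement, so what I would do is present it cleanly.

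First, I would write $\mathcal{F}^{\tilde D} = \mathcal{F}^D \vee \mathcal{G}$, where $\mathcal{G} := \sigma(Z_{\tilde t_{i-1},\tilde t_i} : \{\tilde t_{i-1},\tilde t_i\} \cap (\tilde D \setminus D) \ne \emptyset)$ is the $\sigma$-algebra generated by the extra increments produced by refining $D$ to $\tilde D$. Since $\mathcal{F}^D \subseteq \mathcal{F}^{\tilde D}$, the conditional law of $Z_{s,t}$ given $\mathcal{F}^{\tilde D}$ is a refinement of the conditional law given $\mathcal{F}^D$.

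Next, I would apply the law of total variance conditional on $\mathcal{F}^D$ to the pair $(Z_{s,t}, \mathcal{F}^{\tilde D})$. This gives
\[
\mathrm{Var}(Z_{s,t} \mid \mathcal{F}^D) = E\bigl[\mathrm{Var}(Z_{s,t} \mid \mathcal{F}^{\tilde D}) \,\big|\, \mathcal{F}^D\bigr] + \mathrm{Var}\bigl(E[Z_{s,t} \mid \mathcal{F}^{\tilde D}] \,\big|\, \mathcal{F}^D\bigr).
\]
The key observation is that, because $(Z_{s,t}, \{Z_{\tilde t_{i-1},\tilde t_i}\}_{i})$ is a jointly Gaussian vector, the conditional variance $\mathrm{Var}(Z_{s,t} \mid \mathcal{F}^{\tilde D})$ equals the Schur complement of the relevant covariance block and is therefore a deterministic constant. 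Consequently the conditional expectation in the first term on the right-hand side is trivial, i.e.
\[
E\bigl[\mathrm{Var}(Z_{s,t} \mid \mathcal{F}^{\tilde D}) \,\big|\, \mathcal{F}^D\bigr] = \mathrm{Var}(Z_{s,t} \mid \mathcal{F}^{\tilde D}).
\]

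Finally, I would use the fact that the second summand $\mathrm{Var}(E[Z_{s,t} \mid \mathcal{F}^{\tilde D}] \mid \mathcal{F}^D)$ is non-negative (as a variance) to conclude
\[
\mathrm{Var}(Z_{s,t} \mid \mathcal{F}^D) \;\ge\; \mathrm{Var}(Z_{s,t} \mid \mathcal{F}^{\tilde D}),
\]
which is the claim. There is essentially no obstacle here: the only point requiring any care is the justification that the conditional variance is deterministic, which follows from the standard Gaussian regression formula for the covariance of the residual after projecting onto the linear span of the conditioning increments.
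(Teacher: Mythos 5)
Your proposal is correct and follows essentially the same route as the paper, which proves this lemma in the discussion immediately preceding its statement: write $\mathcal{F}^{\tilde D}=\mathcal{F}^{D}\vee\mathcal{G}$, apply the law of total variance, and use that the conditional variance of a jointly Gaussian vector given some of its components is deterministic, so the remaining term is a non-negative variance. Nothing is missing.
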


Our aim is to show how the optimisation problem of Lemma \ref{QP lemma}
can be
used to exhibit lower bounds on 2D Young integrals with respect to $R$. In
order to do this, we need to take a detour via two technical lemmas.
The first
is the following continuity result for the conditional covariance,
which we
need approximate when passing to a limit from a discrete partition. The
situation we will often have is two subintervals $ [ s,t ]
\subseteq [ 0,S ] $ of $ [ 0,T ] $, and a
sequence of
sets $ ( D_{n} ) _{n=1}^{\infty}$of the form
\[
D_{n}=D_{n}^{1}\cup D_{n}^{2}.
\]
$ ( D_{n}^{1} ) _{n=1}^{\infty}$ and $ (
D_{n}^{2} )
_{n=1}^{\infty}$ here will be nested sequences of partitions of $ [
0,s ] $ and $[t,S]$, respectively, with $\operatorname{mesh} (
D_{n}^{i} )
\rightarrow0$ as $n\rightarrow\infty$ for $i=1,2$. If
\[
\mathcal{F}^{D}:=\sigma \bigl( Z_{u,v}\dvtx \{ u,v \} \subseteq
D \bigr),
\]
then we can define a filtration $ ( \mathcal{G}_{n} )
_{n=1}^{\infty}$ by $\mathcal{G}_{n}:=\mathcal{F}^{D_{n}^{1}}\vee
\mathcal{F}^{D_{n}^{2}}$ and ask about the convergence of
\[
\operatorname{Cov} ( Z_{p,q}Z_{u,v}|\mathcal{G}_{n}
)
\]
as $n\rightarrow\infty$ for subintervals $ [ p,q ] $ and
$ [
u,v ] $ are subintervals of $ [ 0,S ] $. The following lemma
records the relevant continuity statement.

%le6.4 #&#
\begin{lemma}
\label{continuity} For any $p,q,u,v$ such that $ [ p,q ] $ and
$ [ u,v ] $ are subintervals of $ [ 0,S ]
\subseteq
[ 0,T ] $ we have
\[
\operatorname{Cov} ( Z_{p,q}Z_{u,v}|\mathcal{G}_{n}
) \rightarrow\operatorname{Cov} ( Z_{p,q}Z_{u,v}|\mathcal
{F}_{0,s}%
\vee\mathcal{F}_{t,S} )
\]
as $n\rightarrow\infty$.
\end{lemma}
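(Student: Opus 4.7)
The plan is to exploit the Gaussian structure throughout: since each $\mathcal{G}_n$ and the limiting $\sigma$-algebra $\mathcal{F}_{0,s}\vee\mathcal{F}_{t,S}$ are generated by centred Gaussian families, conditional expectations coincide with orthogonal projections in $L^{2}(\Omega)$ onto the corresponding closed Gaussian subspaces. Write $H_n\subset L^{2}(\Omega)$ for the closed linear span of the increments $\{Z_{u,v} : u,v\in D_n\}$ and $H_\infty$ for the closed linear span of $\{Z_{u,v} : [u,v]\subseteq[0,s]\text{ or }[u,v]\subseteq[t,S]\}$. Let $P_n$ and $P_\infty$ denote the corresponding orthogonal projections, so that $E[\,\cdot\,|\mathcal{G}_n]=P_n(\cdot)$ and $E[\,\cdot\,|\mathcal{F}_{0,s}\vee\mathcal{F}_{t,S}]=P_\infty(\cdot)$ on the Gaussian space.

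First I would record the identity $\mathop{\mathrm{Cov}}(Z_{p,q},Z_{u,v}|\mathcal{G}_n)=\langle Z_{p,q},Z_{u,v}\rangle_{L^{2}}-\langle P_n Z_{p,q},P_n Z_{u,v}\rangle_{L^{2}}$, together with its obvious analogue at level $\infty$. This follows in one line from $\mathop{\mathrm{Cov}}(X,Y|\mathcal{G})=E[XY]-E[P_\mathcal{G}X\cdot P_\mathcal{G}Y]$ combined with the orthogonality $X-P_\mathcal{G}X\perp H$; note that both conditional covariances are deterministic because the random variables involved are jointly Gaussian.

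The second (and only nontrivial) step is to verify that $H_n\uparrow H_\infty$, meaning $H_n\subseteq H_{n+1}$ and $\overline{\bigcup_n H_n}=H_\infty$. Monotonicity is built into the nestedness of the partitions $D_n^1, D_n^2$. For density it suffices to approximate an arbitrary generator $Z_{a,b}$ of $H_\infty$: if $[a,b]\subseteq[0,s]$ (the other case being symmetric), choose $a_n,b_n\in D_n^1$ with $a_n\to a$ and $b_n\to b$, which is possible since the mesh of $D_n^1$ tends to $0$. Then $Z_{a_n,b_n}$ is a telescoping sum of increments along $D_n^1$, hence belongs to $H_n$, while $Z_{a_n,b_n}\to Z_{a,b}$ in $L^{2}(\Omega)$ by $L^{2}$-continuity of $Z$, which itself is a consequence of the continuity of the covariance $R$ on $[0,T]^{2}$.

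Once $H_n\uparrow H_\infty$ is established, standard Hilbert space theory immediately gives strong $L^{2}$-convergence $P_n\xi\to P_\infty\xi$ for every $\xi\in L^{2}(\Omega)$. Applying this to $\xi=Z_{p,q}$ and $\xi=Z_{u,v}$ and using the Cauchy--Schwarz inequality yields $\langle P_n Z_{p,q},P_n Z_{u,v}\rangle\to\langle P_\infty Z_{p,q},P_\infty Z_{u,v}\rangle$, and substituting into the identity from the first step closes the argument. The only genuine point of care is the density step, but this is a soft consequence of continuity of $Z$ and the mesh assumption; everything else reduces to linear algebra in Gaussian Hilbert space.
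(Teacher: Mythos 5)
Your argument is essentially sound and, once one definitional slip is repaired, gives a complete proof; but as written the definition of $H_n$ breaks two of your steps. You set $H_n=\overline{\mathrm{span}}\{Z_{u,v}:u,v\in D_n\}$ with $D_n=D_n^1\cup D_n^2$. This family contains \emph{straddling} increments, i.e.\ $Z_{u,v}$ with $u\in D_n^1\subseteq[0,s]$ and $v\in D_n^2\subseteq[t,S]$ --- in particular $Z_{s,t}$ itself, since $s$ and $t$ are endpoints of the two partition blocks. These are not generators of $\mathcal{G}_n=\mathcal{F}^{D_n^1}\vee\mathcal{F}^{D_n^2}$, so with your $H_n$ the identification $E[\,\cdot\,|\mathcal{G}_n]=P_n$ on the first chaos fails, and moreover $H_n\not\subseteq H_\infty$ (generically $Z_{s,t}\notin H_\infty$ --- that is exactly the point of the positive conditional variance), so the claim $\overline{\bigcup_n H_n}=H_\infty$ is false as stated. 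The fix is immediate: take $H_n$ to be the closed span of $\{Z_{u,v}:u,v\in D_n^1\}\cup\{Z_{u,v}:u,v\in D_n^2\}$, matching the generators of $\mathcal{G}_n$. With this definition, nestedness plus telescoping gives $H_n\subseteq H_{n+1}\subseteq H_\infty$, your density argument (mesh tending to $0$ plus $L^2$-continuity of $Z$, which indeed follows from sample-path continuity and Gaussianity) gives $\overline{\bigcup_n H_n}=H_\infty$, and the remaining steps (conditional expectation equals orthogonal projection for jointly Gaussian families, deterministic conditional covariance and the identity $\mathrm{Cov}(X,Y|\mathcal{G})=\langle X,Y\rangle-\langle P_{\mathcal{G}}X,P_{\mathcal{G}}Y\rangle$, strong convergence of projections onto increasing subspaces, Cauchy--Schwarz) are all correct.

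With that repair, your route differs from the paper's only in mechanism. The paper applies the martingale convergence theorem to the increasing filtration $(\mathcal{G}_n)$ to get $\mathrm{Cov}(Z_{p,q},Z_{u,v}|\mathcal{G}_n)\to\mathrm{Cov}(Z_{p,q},Z_{u,v}|\vee_{n}\mathcal{G}_n)$ and then identifies $\vee_{n}\mathcal{G}_n$ with $\mathcal{F}_{0,s}\vee\mathcal{F}_{t,S}$ modulo null sets, using continuity of $Z$ and the vanishing mesh. You replace martingale convergence by the elementary Hilbert-space fact that projections onto an increasing sequence of closed subspaces with dense union converge strongly, and replace the $\sigma$-algebra identification by the corresponding subspace density statement; the approximation ingredient (continuity of $Z$ plus mesh $\to 0$) is the same in both. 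Your version leans more heavily on the Gaussian structure (it only ever sees the first chaos) and is self-contained at the level of linear algebra in Gaussian Hilbert space, while the paper's is shorter but leaves the identification of the limiting $\sigma$-algebra as a one-line claim.
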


\begin{pf}
The martingale convergence theorem gives
\[
\operatorname{Cov} ( Z_{p,q}Z_{u,v}|\mathcal{G}_{n}
) \rightarrow\operatorname{Cov} \Biggl( Z_{p,q}Z_{u,v}\bigg|\bigvee
_{n=1}^{\infty
}\mathcal{G}_{n} \Biggr),\qquad \mbox{a.s. and
in }L^{p}\mbox{ for all }p\geq1.
\]
The continuity of $Z$ and the fact that $\operatorname{mesh} ( D_{n} )
\rightarrow0$ easily implies that, modulo null sets, one has $\bigvee
_{n=1}^{\infty}\mathcal{G}_{n}=\mathcal{F}_{0,s}\vee\mathcal{F}_{t,T}$.
%If the process $Z$ is continuous, then any cylindrical event can be
%written in terms of events involving times in $D\equiv\cup_{k\ge
%1}D_{k}$. A base example is obtained by considering $\tau\in[0,s]
% \mbox{where}
%D_{n}^{\tau} = \left\{ u\in D; |u-\tau| \le\frac{1}{n}\right\}.
%This is up to negligible sets, which means that we should specify
%somewhere that our probability space is complete. We now conclude by
%saying that cylindrical sets generate $\mathcal{F}_{0,s}\vee
%connexion very precise from what I've read (namely 3 pages of Meyer's
%Probability and Potential book).
\end{pf}

We now introduce another condition on $Z$, which we will later discard. This
condition is virtually the same as Condition~\ref{cond dom}, the only
difference being that we insist on the strict positivity of the
conditional variance.

%co6 #&#
\begin{condition}
\label{prime} Let $ ( Z_{t} ) _{t\in [ 0,T ] }$
be a
real-valued continuous Gaussian process. We will assume that for every
$ [ u,v ] \subseteq [ s,t ] \subseteq [
0,S ]
\subseteq [ 0,T ] $ we have
%
%e6.12 #&#
\begin{equation}
\operatorname{Cov} ( Z_{s,t},Z_{u,v}|
\mathcal{F}_{0,s}\vee \mathcal{F}_{t,S} ) >0.
\end{equation}
\end{condition}

The second technical lemma we need will apply whenever we work with a Gaussian
process that satisfies Condition \ref{prime}. It delivers a nested
sequence of
partitions, with mesh tending to zero, and such that the discretisation
of $Z$
along each partition will satisfy the dual feasibility condition
[i.e., (\ref{feas}) in Lemma \ref{QP lemma}].

%le6.5 #&#
\begin{lemma}
\label{technical}Let $ ( Z_{t} ) _{t\in [ 0,T ]
}$ be a
continuous Gaussian process that satisfies Condition \ref{prime}. Then for
every $0\leq s<t\leq S\leq T$ there exists a nested sequence of partitions
\[
( D_{m} ) _{m=1}^{\infty}= \bigl( \bigl\{
t_{i}^{m}%
\dvtx i=0,1,\ldots,n_{m} \bigr\}
\bigr) _{m=1}^{\infty}%
\]
of $ [ 0,S ] $ with the following properties:

\begin{longlist}[(1)]
\item[(1)] The mesh of $D_{m}$ converges to $0$ as $m\rightarrow\infty$.

\item[(2)] One has $ \{ s,t \} \subseteq D_{m}$ for all $m$.

\item[(3)] If $Z_{1}^{m}$ and $Z_{2}^{m}$ are the jointly Gaussian vectors,
%
%e6.13 #&#
%e6.14 #&#
\begin{eqnarray*}
Z_{1}^{m} &= &\bigl( Z_{t_{i}^{m},t_{i+1}^{m}}\dvtx t_{i}^{m}
\in D_{m}\cap \bigl(  [0,s ) \cup [ t,S)\bigr) \bigr),
\\
Z_{2}^{m} &= &\bigl( Z_{t_{i}^{m},t_{i+1}^{m}}\dvtx t_{i}^{m}
\in D_{m}\cap [ s,t) \bigr),
\end{eqnarray*}
with respective covariance matrices $Q_{11}^{m}$ and $Q_{22}^{m}$,
then the Gaussian vector $ ( Z_{1}^{m},Z_{2}^{m} ) $ has a
covariance matrix of the form
\[
Q^{m}=\pmatrix{
Q_{11}^{m} & Q_{12}^{m}
\vspace*{2pt}\cr
\bigl( Q_{12}^{m} \bigr) ^{T} &
Q_{22}^{m}},
\]
and the Schur complement of $Q_{11}^{m}$ in $Q^{m}$ has nonnegative
row sums.
\end{longlist}
\end{lemma}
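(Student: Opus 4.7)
The plan is to build the sequence $(D_m)$ by induction on $m$, translating the Schur-complement row-sum property into a statement about conditional covariances of $Z$ and then using Lemma \ref{continuity} together with the strict inequality in Condition \ref{prime} to secure positivity at each stage.

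First, I would translate the row-sum condition into a probabilistic statement. By the standard formula for a Gaussian conditional distribution, the Schur complement of $Q_{11}^m$ in $Q^m$ is precisely the covariance matrix of $Z_2^m$ given $Z_1^m$. Writing $I_1,\ldots,I_{k_m}$ for the increments of $D_m$ inside $[s,t]$ and $\mathcal{G}_m$ for the $\sigma$-algebra generated by the $D_m$-increments inside $[0,s]\cup[t,S]$, the $j$-th row sum of this Schur complement equals
$$\sum_{k=1}^{k_m}\mathop{\mathrm{Cov}}(Z_{I_j},Z_{I_k}\mid\mathcal{G}_m)=\mathop{\mathrm{Cov}}(Z_{I_j},Z_{s,t}\mid\mathcal{G}_m),$$
using the telescoping identity $\sum_k Z_{I_k}=Z_{s,t}$. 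It therefore suffices to arrange that these finitely many conditional covariances are all non-negative.

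I would then carry out the induction. Start with $D_1=\{0,s,t,S\}$, augmented as needed so that the mesh is below $1$, and assume $D_m\supseteq\{s,t\}$ has been built with the required properties. To build $D_{m+1}$, first add finitely many points inside $[s,t]$ to push the mesh there below $1/(m+1)$; this fixes a finite family $I_1,\ldots,I_{k_{m+1}}$. Condition \ref{prime} gives $\mathop{\mathrm{Cov}}(Z_{I_j},Z_{s,t}\mid\mathcal{F}_{0,s}\vee\mathcal{F}_{t,S})>0$ for each $j$. Taking any nested sequence of refinements of $D_m\cap([0,s]\cup[t,S])$ whose mesh tends to zero, Lemma \ref{continuity} ensures the discrete conditional covariances converge to the strictly positive continuous ones, and hence eventually become non-negative for every $j$. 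Since there are only finitely many $j$, a single sufficiently fine refinement works simultaneously for all of them, and I can arrange its mesh to be below $1/(m+1)$. The resulting $D_{m+1}$ is then a nested refinement of $D_m$ with the required mesh and row-sum properties.

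The step I expect to be slightly delicate is the simultaneous combination of Condition \ref{prime} with Lemma \ref{continuity}: the \emph{strict} inequality in Condition \ref{prime} is essential for supplying the positive buffer that absorbs the finite-partition approximation error, while the finiteness of the index set at each stage of the induction is what allows one refinement of the outside partition to handle every constraint at once. The probabilistic reinterpretation of the Schur complement as a conditional covariance matrix is what makes this reduction possible in the first place.
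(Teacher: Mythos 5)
Your proposal is correct and follows essentially the same route as the paper's proof: both interpret the Schur complement as the conditional covariance matrix of the inner increments given the outer ones, use the telescoping identity to identify row sums with $\mathop{\mathrm{Cov}}(Z_{I_j},Z_{s,t}\mid\cdot)$, and then invoke Lemma \ref{continuity} together with the strict positivity in Condition \ref{prime} to choose, stage by stage, a sufficiently fine outer refinement handling the finitely many constraints simultaneously. The only cosmetic difference is that the paper fixes the three nested partition sequences of $[0,s]$, $[s,t]$, $[t,S]$ in advance and selects an increasing subsequence index $r(m)$ for the outer pieces, whereas you generate the refinements inductively, which amounts to the same construction.
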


\begin{pf}
See the \hyperref[app]{Appendix}.
\end{pf}

The next result shows how we can bound from below the 2D Young
integral of a
H\"{o}lder-continuous $f$ against $R$. The lower bound thus obtained is
expressed in terms of the minimum of $f$, and the conditional variance
of the
Gaussian process.

%pr6.6 #&#
\begin{proposition}
\label{comparison}Suppose that $R\dvtx [ 0,T ] ^{2}\rightarrow%
%TCIMACRO{\U{211d} }%
%BeginExpansion
\mathbb{R}
%EndExpansion
$ is the covariance function of some continuous Gaussian process $ (
Z_{t} ) _{t\in [ 0,T ] }$. Suppose that $R$ has
finite 2D
$\rho
$-variation for some $\rho$ in $[1,2)$ and that $Z$ is nondegenerate
and has
a nonnegative conditional covariance (i.e., satisfies Condition \ref
{cond dom}%
). Let $\gamma\in ( 0,1 ) $ be such that $1/\rho+\gamma
>1$ and
assume $f\in C^{\gamma} (  [ 0,T ],%
%TCIMACRO{\U{211d} }%
%BeginExpansion
\mathbb{R}
%EndExpansion
) $. Then for every $ [ s,t ] \subseteq [
0,T ] $
we have the following lower bound on the 2D-Young integral of $f$
against $R$:
\[
\int_{ [ 0,T ] ^{2}}f_{u}f_{v}\,dR ( u,v ) \geq
\Bigl( \inf_{u\in [ s,t ] }\bigl\llvert f ( u ) \bigr\rrvert
^{2} \Bigr) \operatorname{Var} ( Z_{s,t}|
\mathcal{F}_{0,s}%
\vee\mathcal{F}_{t,T} ).
\]
\end{proposition}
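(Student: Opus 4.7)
My plan is to discretise the 2D Young integral by Riemann sums along partitions of $[0,T]$ containing $\{s,t\}$, which recasts the claim as a uniform lower bound on a sequence of constrained quadratic programs; this lower bound is then produced by the Schur complement machinery of Lemma \ref{QP lemma}. Begin with a harmless reduction: set $c:=\inf_{u\in[s,t]}|f(u)|$. If $c=0$ the right-hand side vanishes and the claim is trivial, since the left-hand side is non-negative (being the limit of Riemann sums $f(D)^{T}Q^{D}f(D)$ against positive semi-definite $Q^{D}$). Otherwise continuity of $f$ forces constant sign on $[s,t]$, and, replacing $f$ by $-f$ if necessary (which leaves both sides unchanged), we may assume $f\geq c$ on $[s,t]$.

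I would first treat the case where $Z$ satisfies the strict Condition \ref{prime}. Since $1/\rho+\gamma>1$, standard 2D Young integration gives
\[
\int_{[0,T]^{2}} f_u f_v\,dR(u,v) \;=\; \lim_{m\to\infty} f(D_m)^{T}\,Q^m\, f(D_m)
\]
for any nested sequence $(D_m)$ of partitions of $[0,T]$ with $\{s,t\}\subseteq D_m$ and vanishing mesh, where $Q^m$ is the covariance matrix of the $Z$-increments along $D_m$. By Lemma \ref{technical}, $(D_m)$ may be chosen so that, splitting $Q^m$ into the ``outside'' block $Q^m_{11}$ (for increments in $[0,s)\cup[t,T)$) and ``inside'' block $Q^m_{22}$ (for increments in $[s,t)$), the Schur complement $S^m$ of $Q^m_{11}$ in $Q^m$ has non-negative row sums. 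Since $f(t_i^m)\geq c$ for all $t_i^m\in[s,t)$, Lemma \ref{QP lemma} applied with $b=c$ yields
\[
f(D_m)^{T} Q^m f(D_m) \;\geq\; c^{2}\,\mathbf{1}^{T} S^m \mathbf{1}.
\]

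The key structural identification is that $S^m$ is precisely the conditional covariance matrix of the inside Gaussian vector given the outside vector. Summing against $\mathbf{1}$ and using the telescoping identity $\sum_{t_i^m\in[s,t)}Z_{t_i^m,t_{i+1}^m}=Z_{s,t}$ converts this into
\[
\mathbf{1}^{T} S^m \mathbf{1} \;=\; \mathrm{Var}(Z_{s,t}\,|\,\mathcal{G}_m),
\]
where $\mathcal{G}_m$ is the $\sigma$-algebra generated by the outside increments. Letting $m\to\infty$ and applying Lemma \ref{continuity} to pass $\mathrm{Var}(Z_{s,t}|\mathcal{G}_m)\to\mathrm{Var}(Z_{s,t}|\mathcal{F}_{0,s}\vee\mathcal{F}_{t,T})$ establishes the proposition under Condition \ref{prime}.

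To remove Condition \ref{prime} and work only with the non-strict Condition \ref{cond dom}, I would perturb $Z$ by a small independent Brownian motion: $Z^{\varepsilon}:=Z+\varepsilon B$, with covariance $R^{\varepsilon}=R+\varepsilon^{2}R^{BM}$. The added Brownian term injects the strict positivity of conditional covariances needed to invoke the previous step for $Z^{\varepsilon}$, giving $\int f_u f_v\,dR + \varepsilon^{2}|f|_{L^{2}}^{2} \geq c^{2}\,\mathrm{Var}(Z^{\varepsilon}_{s,t}|\mathcal{F}^{Z^{\varepsilon}}_{0,s}\vee\mathcal{F}^{Z^{\varepsilon}}_{t,T})$; since conditioning on a larger $\sigma$-algebra can only decrease variance, the right-hand side is bounded below by $c^{2}\,\mathrm{Var}(Z_{s,t}|\mathcal{F}_{0,s}\vee\mathcal{F}_{t,T})+c^{2}\varepsilon^{2}(t-s)$ (using independence of $Z$ and $B$), and sending $\varepsilon\to 0$ finishes the proof. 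The hardest step in this scheme is Lemma \ref{technical}: producing partitions whose discrete Schur complements have non-negative row sums is precisely where the Gaussian interpretation of the Schur complement as a conditional covariance meets Condition \ref{cond dom}, and it is there that the content of the non-negative conditional-covariance assumption is fully used. Once that lemma is in hand, the rest is bookkeeping -- Young-integration convergence, strong duality in the finite-dimensional QP, and martingale convergence for the conditional variance.
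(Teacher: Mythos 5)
Your proposal is correct and follows essentially the same route as the paper's proof: reduce to the strict Condition \ref{prime} by the $\epsilon B$ perturbation, use Lemma \ref{technical} to obtain partitions whose Schur complements have non-negative row sums, apply the duality bound of Lemma \ref{QP lemma}, identify $\mathbf{1}^{T}S\mathbf{1}$ with a conditional variance, and pass to the limit of the Riemann sums. The only (harmless) cosmetic difference is that at the last step you send the conditional variances to their limit via Lemma \ref{continuity}, whereas the paper bounds them below uniformly in the partition by $\mathop{\mathrm{Var}}\left(Z_{s,t}|\mathcal{F}_{0,s}\vee\mathcal{F}_{t,T}\right)$ using the monotonicity statement of Lemma \ref{monotone}.
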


%re6.7 #&#
\begin{remark}
We emphasise again that $\mathcal{F}_{a,b}$ is the $\sigma$-algebra generated
by the increments of the form $Z_{u,v}$ for $u,v\in [ a,b ] $.
\end{remark}

\begin{pf*}{Proof of Proposition \ref{comparison}}
Fix $ [ s,t ] \subseteq [ 0,T ] $, and take
$b:=\break \inf_{u\in [ s,t ] }\llvert  f ( u )
\rrvert $.

\textit{Step} 1: We first note that there is no
loss of
generality in assuming the stronger Condition \ref{prime} instead of Condition
\ref{cond dom}. To see this, let $ ( B_{t} ) _{t\in [
0,T ] }$ be a Brownian motion, which is independent of $ (
Z_{t} ) _{t\in [ 0,T ] }$, and for every $\varepsilon
>0$ define
the perturbed process
\[
Z_{t}^{\varepsilon}:=Z_{t}+\varepsilon B_{t}.
\]
It is easy to check that $Z^{\varepsilon}$ satisfies the conditions in the
statement. Let $\mathcal{F}_{p,q}^{\varepsilon}$ be the $\sigma$-algebra
generated by the increments $Z_{u,v}^{\varepsilon}$ between times $p$ and $q$
[note that $\mathcal{F}_{p,q}^{\varepsilon}$ actually equals $\mathcal
{F}%
_{p,q}\vee\sigma ( B_{l,m}\dvtx u\leq l<m\leq q ) $], and note
that we
have
\[
\operatorname{Cov} \bigl( Z_{s,t}^{\varepsilon},Z_{u,v}^{\varepsilon}%
|\mathcal{F}_{0,s}^{\varepsilon}\vee\mathcal{F}_{t,T}^{\varepsilon
}
\bigr) =\operatorname{Cov} ( Z_{s,t},Z_{u,v}|
\mathcal{F}_{0,s}\vee \mathcal{F}_{t,T} ) +
\varepsilon^{2} ( u-v ) >0
\]
for every $0\leq s<u<v\leq t\leq T$. It follows that $Z^{\varepsilon}$ satisfies
Condition~\ref{prime}. Let $R^{\varepsilon}$ denote the covariance
function of
$Z^{\varepsilon}$. If we could prove the result with the additional
hypothesis of
Condition~\ref{prime}, then it would follow that
%
%e6.15 #&#
\begin{eqnarray}\label{x prime}
\int_{ [ 0,T ] ^{2}}f_{u}f_{v}\,dR^{\varepsilon}
( u,v ) &\geq& b^{2}\Var \bigl( Z_{s,t}^{\varepsilon}|
\mathcal{F}_{0,s}^{\varepsilon
}\vee \mathcal{F}_{t,T}^{\varepsilon}
\bigr)
\nonumber
\\[-8pt]
\\[-8pt]
\nonumber
&=& b^{2}\Var ( Z_{s,t}|\mathcal{F}_{0,s}\vee
\mathcal {F}_{t,T} ) +b^{2}\varepsilon^{2} ( t-s ).
\end{eqnarray}
Because
\[
\int_{ [ 0,T ] ^{2}}f_{u}f_{v}\,dR^{\varepsilon}
( u,v ) =\int_{ [ 0,T ] ^{2}}f_{u}f_{v}\,dR ( u,v
) +\varepsilon ^{2}\llvert f\rrvert _{L^{2} [ 0,T ] }^{2},
\]
the result for $Z$ will then follow from (\ref{x prime}) by letting
$\varepsilon$
tend to zero.

\textit{Step} 2: We now prove the result under the
additional assumption of Condition~\ref{prime}. By considering $-f$ if
necessary, we may assume that $f$ is bounded from below by $b$ on
$ [
s,t ] $. Since we now assume Condition \ref{prime} we can use Lemma~\ref{technical} to obtain a nested sequence of partitions $ (
D_{r} ) _{r=1}^{\infty}$ such that $ \{ s,t \}
\subset D_{r}$
for all~$r$, $\operatorname{mesh} ( D_{r} ) \rightarrow0$ as $r\rightarrow
\infty$,
and such that the dual feasibility condition (property 3 in the Lemma~\ref{technical}) holds. Suppose $D= \{ t_{i}\dvtx i=0,1,\ldots
,n \}
$ is
any partition of $ [ 0,T ] $ in this sequence (i.e., $D=D_{r}$ for
some $r$). Then for some $l<m\in \{ 0,1,\ldots,n-1 \} $ we have
$t_{l}=s$ and $t_{m}=t$. Denote by $f ( D ) $ the column vector
\[
f ( D ) = \bigl( f ( t_{0} ),\ldots,f ( t_{n-1} ) \bigr)
^{T}\in%
%TCIMACRO{\U{211d} }%
%BeginExpansion
\mathbb{R} %EndExpansion
^{n},
\]
and $Q= ( Q_{i,j} ) _{1\leq i,j<n}$ the symmetric $n\times n$
matrix with entries
\[
Q_{ij}=R\pmatrix{
t_{i-1},t_{i}
\vspace*{2pt}\cr
t_{j-1},t_{j}} =E [ Z_{t_{i-1},t_{i}}Z_{t_{j-1},t_{j}} ].
\]
From the nondegeneracy of $Z$, it follows that $Q$ is positive
definite. The
Riemann sum approximation to the 2D integral of $f$ against $R$ along the
partition $D$ can be written as%
%
%e6.16 #&#
\begin{eqnarray}\label{Riemann}%
\sum_{i=1}^{n}\sum
_{j=1}^{n}f_{t_{i-1}}f_{t_{j-1}}R\pmatrix{
t_{i-1},t_{i}
\vspace*{2pt}\cr
t_{j-1},t_{j}} &=&\sum_{i=1}^{n}\sum
_{j=1}^{n}f_{t_{i-1}}f_{t_{j-1}}Q_{i,j}
\nonumber
\\[-8pt]
\\[-8pt]
\nonumber
&=&f
( D ) ^{T}Qf ( D ).
\end{eqnarray}
If necessary, we can ensure that last $m-l$ components of $f (
D ) $ are bounded below by $b$. To see this, we simply permute its
coordinates using any bijective map $\tau\dvtx  \{ 1,\ldots,n \}
\rightarrow \{ 1,\ldots,n \} $ which has the property that
\[
\tau ( l+j ) =n-m+l+j\qquad\mbox{for }j=0,1,\ldots,m-l.
\]
Fix one such map $\tau$, and let $f_{\tau} ( D ) $ denote the
vector resulting from applying $\tau$ to the coordinates of $f (
D ) $. Similarly, let $Q_{\tau}= ( Q_{i,j}^{\tau} )
_{1\leq
i,j<n}$ be the $n\times n$ matrix
\[
Q_{ij}^{\tau}=Q_{\tau ( i ) \tau ( j ) },
\]
and note that $Q^{\tau}$ is the covariance matrix of the Gaussian vector
\[
Z= ( Z_{t_{\tau ( 1 ) -1,}t_{\tau ( 1 ) }}%
,\ldots,Z_{t_{\tau ( n ) -1},t_{\tau ( n )
}} ).
\]

A simple calculation shows that
\[
f ( D ) ^{T}Qf ( D ) =f_{\tau} ( D ) ^{T}Q_{\tau}f_{\tau}
( D ).
\]
We can apply Lemma \ref{QP lemma} because condition (\ref{feas}) is guaranteed
to hold by the properties of the sequence $ ( D_{r} )
_{r=1}^{\infty}$. We deduce that
%
%e6.17 #&#
\begin{equation}
f ( D ) ^{T}Qf ( D ) =f_{\tau} ( D ) ^{T}Q_{\tau}f_{\tau}
( D ) \geq b^{2}\sum_{i,j=1}^{m-l}S_{ij},
\label{permute}%
\end{equation}
where $S$ is the $ ( m-l ) \times ( m-l ) $ matrix
obtained by taking the Schur complement of the leading principal $ (
n-m+l ) \times ( n-m+l ) $ minor of $\tilde{Q}$. As already
mentioned, the distribution of a Gaussian vector conditional on some of its
components remains Gaussian and the conditional covariance is described
by a
suitable Schur complement. In this case, this means we have that
%
%e6.18 #&#
\begin{eqnarray}\label{Schur cov}%
&&S=\operatorname{Cov} \bigl[ ( Z_{t_{l},t_{l+1}},\ldots,Z_{t_{m-1}
,t_{m}} )
|Z_{t_{j-1},t_{j}},
\nonumber
\\[-8pt]
\\[-8pt]
\nonumber
&&\hspace*{44pt} j\in \{ 1,\ldots,l \} \cup \{ m+1,\ldots,n \} \bigr]
.
\end{eqnarray}
If we define
\[
\mathcal{F}^{D}:=\sigma \bigl( Z_{t_{j-1},t_{j}}\dvtx  j\in \{ 1,
\ldots,l \} \cup \{ m+1,\ldots,n \} \bigr),
\]
to be the $\sigma$-algebra generated by the increments of $Z$ in
$D\setminus [ s,t ] $, then using (\ref{Schur cov}) we
arrive at
%
%e6.19 #&#
\begin{eqnarray}\label{cond var increment}
\sum_{i,j=1}^{m-l}S_{ij} &=&\sum
_{i,j=1}^{m-l-1}E \bigl[ ( Z_{t_{l+i-1},t_{l+i}} ) (
Z_{t_{l+j-1},t_{l+j}} ) |\mathcal{F}%
^{D} \bigr]
\nonumber
\\
& &{}-\sum_{i,j=1}^{m-l-1}E \bigl[ (
Z_{t_{l+i-1},t_{l+i}} ) |\mathcal{%
F}^{D} \bigr] E \bigl[ (
Z_{t_{l+j-1},t_{l+j}} ) |\mathcal {F}^{D}%
\bigr]
\\
&=&E \bigl[ ( Z_{s,t} ) ^{2}|\mathcal{F}^{D}
\bigr] -E \bigl[ Z_{s,t}|\mathcal{F}^{D} \bigr]
^{2} =\Var \bigl( Z_{s,t}|\mathcal{F}^{D} \bigr).
\nonumber
\end{eqnarray}

To complete the proof, we note that $\mathcal{F}^{D}\subseteq\mathcal{F}
_{0,s}\vee\mathcal{F}_{t,T}$, and exploit the monotonicity of the conditional
variance described by Lemma \ref{monotone} to give%
%
%e6.20 #&#
\begin{equation}
\operatorname{Var} \bigl( Z_{s,t}|\mathcal{F}^{D} \bigr)
\geq \operatorname{Var} ( Z_{s,t}|\mathcal{F}_{0,s}\vee
\mathcal {F}%
_{t,T} ). \label{projection}%
\end{equation}
Then by combining (\ref{projection}), (\ref{cond var increment}) and
(\ref{permute}) in (\ref{Riemann}), we obtain
\[
\sum_{i=1}^{n}\sum
_{j=1}^{n}f_{t_{i-1}}f_{t_{j-1}}Q_{i,j}
\geq b^{2}%
\operatorname{Var} ( Z_{s,t}|
\mathcal{F}_{0,s}\vee\mathcal {F}%
_{t,T} ).
\]
Because this inequality holds for any $D\in ( D_{r} )
_{r=1}^{\infty}$, we can apply it for $D=D_{r}$ and let $r\rightarrow
\infty$,
which yields
%
%e6.21 #&#
\[
\int_{ [ 0,T ] ^{2}}f_{u}f_{v}\,dR ( u,v ) \geq
b^{2}\Var ( Z_{s,t}|\mathcal{F}_{0,s}\vee\mathcal
{F}_{t,T} ),
\]
whereupon the proof is complete.
\end{pf*}

We now deliver on a promise we made in Section~\ref{main thm} by
proving that
the diagonal dominance of the increments implies the positivity of the
conditional covariance.

%co6.8 #&#
\begin{corollary}
Let $ ( Z_{t} ) _{t\in [ 0,T ] }$ be a real-valued
continuous Gaussian process. If $Z$ satisfies Condition
\ref{diagonal dominance} then it also satisfies Condition~\ref{cond dom}.
\end{corollary}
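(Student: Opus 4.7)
The plan is to combine three ingredients: (i) the diagonal dominance of increment covariance matrices supplied by Condition~\ref{diagonal dominance}, (ii) the classical linear-algebraic fact (\cite{zhang}) that diagonal dominance is preserved under taking Schur complements, and (iii) the identification, in the Gaussian setting, of the Schur complement with the conditional covariance matrix of a sub-vector. The desired non-negativity will then be read off as a single row sum and passed to the limit via Lemma~\ref{continuity}.

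Fix $[u,v]\subseteq[s,t]\subseteq[0,S]\subseteq[0,T]$. First I would choose nested partitions $D_n^1$ of $[0,s]$ and $D_n^2$ of $[t,S]$ whose meshes tend to zero, and form $D_n = D_n^1 \cup \{s,u,v,t\} \cup D_n^2$. This is a partition of $[0,S]$ whose increment vector splits as $(B_n^1,\, Z_{s,u},\, Z_{u,v},\, Z_{v,t},\, B_n^2)$; the key feature is that $Z_{u,v}$ is left as a single coordinate while only $[0,s]$ and $[t,S]$ are subdivided. By Condition~\ref{diagonal dominance} the covariance matrix $Q^{D_n}$ is diagonally dominant, and since this property is obviously preserved by any simultaneous row--column permutation I may place the block $A_n := (Z_{s,u}, Z_{u,v}, Z_{v,t})$ first and $B_n := (B_n^1, B_n^2)$ second. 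Invoking \cite{zhang}, the $3\times 3$ Schur complement $S_n$ of the $B_n$-block remains diagonally dominant; in the Gaussian setting $S_n$ is precisely the covariance matrix of $A_n$ conditional on $\sigma(B_n)$, so its diagonal entries are conditional variances and hence non-negative. But any diagonally dominant matrix with non-negative diagonal has non-negative row sums, since $(S_n)_{ii} \geq \sum_{j\neq i}|(S_n)_{ij}| \geq -\sum_{j\neq i}(S_n)_{ij}$. Applying this to the middle row of $S_n$ and using bilinearity of conditional covariance gives
\[
0 \leq \mathop{\mathrm{Cov}}(Z_{u,v},\, Z_{s,u}+Z_{u,v}+Z_{v,t} \mid \mathcal{G}_n) = \mathop{\mathrm{Cov}}(Z_{u,v}, Z_{s,t}\mid \mathcal{G}_n),
\]
where $\mathcal{G}_n := \mathcal{F}^{D_n^1} \vee \mathcal{F}^{D_n^2} = \sigma(B_n)$.

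To finish, Lemma~\ref{continuity} (applied after swapping the two arguments of $\mathop{\mathrm{Cov}}$, which is legitimate by symmetry) yields $\mathop{\mathrm{Cov}}(Z_{s,t}, Z_{u,v}\mid \mathcal{G}_n) \to \mathop{\mathrm{Cov}}(Z_{s,t}, Z_{u,v}\mid \mathcal{F}_{0,s}\vee \mathcal{F}_{t,S})$ as $n\to\infty$, so the limit remains non-negative, which is exactly Condition~\ref{cond dom}. The main obstacle is really just the combination of (ii) and (iii): one must invoke the non-trivial (though classical) algebraic fact that diagonal dominance survives Schur complementation, and then correctly identify the resulting Schur complement as the covariance matrix of the appropriate conditional Gaussian law. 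Once these two facts are taken for granted, the row-sum observation and the limiting step via Lemma~\ref{continuity} are essentially automatic, and the somewhat delicate strategy of refining only \emph{outside} $[s,t]$ (while keeping $Z_{u,v}$ as an atomic coordinate) is what allows the row sum to collapse to the covariance we want.
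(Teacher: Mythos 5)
Your proof is correct and follows essentially the same route as the paper: refine only outside $[s,t]$ while keeping $Z_{s,u}$, $Z_{u,v}$, $Z_{v,t}$ as atomic coordinates, use preservation of diagonal dominance under Schur complementation together with the identification of the Schur complement as the conditional covariance of the Gaussian sub-vector, read off the relevant row sum, and pass to the limit via Lemma~\ref{continuity}. If anything, you spell out the permutation and row-sum steps more explicitly than the paper does.
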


\begin{pf}
Fix $s<t$ in $ [ 0,T ] $, let $(D_{n})_{n=1}^{\infty}$ be a
sequence of partitions having the properties described in the statement of
Lemma \ref{continuity} and suppose $ [ u,v ] \subseteq$
$ [
s,t ] $. From the conclusion of Lemma \ref{continuity}, we have that
%
%e6.22 #&#
\begin{equation}
\operatorname{Cov} ( Z_{s,t}Z_{u,v}|\mathcal{G}_{n}
) \rightarrow\operatorname{Cov} ( Z_{s,t}Z_{u,v}|\mathcal
{F}_{0,s}%
\vee\mathcal{F}_{t,T} )
\label{ap}%
\end{equation}
as $n\rightarrow\infty$. Let $Z_{n}$ be the Gaussian vector whose components
consist of the increments of $Z$ over all the consecutive points in the
partition $D_{n}\cup \{ s,u,v,t \} $. Let $Q$ denote the covariance
matrix of $Z_{n}$. The left-hand side of (\ref{ap}) is the sum of all the
entries in some row of a particular Schur complement of $Q$. $Z$ is
assumed to
have diagonally dominant increments. Any such Schur complement of $Q$ will
therefore be diagonally dominant, since diagonal dominance is preserved under
Schur-complementation (see \cite{zhang}). As diagonally dominant
matrices have
nonnegative row sums, it follows that $\operatorname{Cov} (
Z_{s,t}Z_{u,v}|\mathcal{G}_{n} ) $ is nonnegative, and hence
the limit
in (\ref{ap}) is also.
\end{pf}

We are now in a position to generalise the $L^{2}$-interpolation inequality
(\ref{itl}) stated earlier.

%th6.9 #&#
\begin{theorem}[(Interpolation)]\label{interpolation}Let $ ( Z_{t} ) _{t\in
[
0,T ] }$ be a continuous Gaussian process with covariance function
$R\dvtx [ 0,T ] ^{2}\rightarrow%
%TCIMACRO{\U{211d} }%
%BeginExpansion
\mathbb{R}
%EndExpansion
$. Suppose $R$ has finite two-dimensional $\rho$-variation for some
$\rho$ in
$[1,2)$. Assume that $Z$ is nondegenerate in the sense of Definition
\ref{definition nondegeneracy}, and has positive conditional
covariance (i.e., satisfies Condition~\ref{cond dom}). Suppose $f\in C (  [
0,T ],%
%TCIMACRO{\U{211d} }%
%BeginExpansion
\mathbb{R}
%EndExpansion
) $ with $\gamma+1/\rho>1$. Then for every $0<S\leq T$ at least
one of
the following inequalities is always true:
%
%e6.23 #&#
\begin{equation}
\llVert f\rrVert _{\infty; [ 0,S ] }\leq2E \bigl[ Z_{S}%
^{2} \bigr] ^{-1/2} \biggl( \int_{ [ 0,S ]
^{2}}f_{s}f_{t}\,dR
( s,t ) \biggr) ^{1/2}, \label{L2}%
\end{equation}
or, for some interval $ [ s,t ] \subseteq [ 0,S
] $ of
length at least
\[
\biggl( \frac{\llVert  f\rrVert  _{\infty; [ 0,S ]
}}{2\llVert
f\rrVert  _{\gamma; [ 0,S ] }} \biggr) ^{1/\gamma},
\]
we have%
%
%e6.24 #&#
\begin{equation}
\frac{1}{4}\llVert f\rrVert _{\infty; [ 0,S ] }^{2}%
\operatorname{Var} ( Z_{s,t}|\mathcal{F}_{0,s}\vee\mathcal
{F}%
_{t,S} ) \leq\int_{ [ 0,S ] ^{2}}f_{v}f_{v^{\prime
}}\,dR
\bigl( v,v^{\prime} \bigr). \label{inf}%
\end{equation}
\end{theorem}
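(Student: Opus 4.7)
The plan is to combine Proposition~\ref{comparison} with a simple dichotomy that separates the scale on which $|f|$ stays comparable to its supremum. Set $M := \|f\|_{\infty;[0,S]}$ and $K := \|f\|_{\gamma;[0,S]}$. The degenerate cases $M = 0$ and $K = 0$ (constant $f$, for which $\int_{[0,S]^2} f_u f_v \, dR(u,v) = M^2\,E[Z_S^2]$) can be dispatched immediately, so assume $M,K > 0$ and put
\[
\delta := \left(\frac{M}{2K}\right)^{1/\gamma}.
\]
Fix $t^{\ast} \in [0,S]$ with $|f(t^{\ast})| = M$. For any $u \in [0,S]$ with $|u - t^{\ast}| \le \delta$, the $\gamma$-H\"older bound $|f(u) - f(t^{\ast})| \le K|u-t^{\ast}|^{\gamma} \le M/2$ forces $|f(u)| \ge M/2$.

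Next I would split according to whether $\delta \ge S$ or $\delta < S$. If $\delta \ge S$, then $|f| \ge M/2$ on all of $[0,S]$, and applying Proposition~\ref{comparison} (with the ambient interval taken to be $[0,S]$) to the choice $[s,t] = [0,S]$ yields
\[
\int_{[0,S]^2} f_u f_v \, dR(u,v) \ge \frac{M^{2}}{4}\,\mathop{\mathrm{Var}}\bigl(Z_{0,S} \,\big|\, \mathcal{F}_{0,0} \vee \mathcal{F}_{S,S}\bigr) = \frac{M^{2}}{4}\, E[Z_{S}^{2}],
\]
since both conditioning $\sigma$-algebras are trivial and $Z_0 = 0$. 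Rearranging gives~\eqref{L2}. If instead $\delta < S$, I would select an interval $[s,t] \subseteq [0,S] \cap [t^{\ast}-\delta, t^{\ast}+\delta]$ of length exactly $\delta$---for instance $[0,\delta]$ when $t^{\ast} \le \delta$, $[S-\delta,S]$ when $t^{\ast} \ge S-\delta$, and $[t^{\ast}-\delta/2, t^{\ast}+\delta/2]$ otherwise. Since $|f| \ge M/2$ on this subinterval, Proposition~\ref{comparison} applied to it produces~\eqref{inf} directly.

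The bulk of the work is already absorbed into Proposition~\ref{comparison} (and beneath it, the quadratic-programming Lemma~\ref{QP lemma}, the approximating partitions of Lemma~\ref{technical}, and the conditional-variance monotonicity in Lemma~\ref{monotone}). The only genuinely new ingredient here is the dichotomy itself: a \emph{global} regime where the H\"older oscillation scale $\delta$ exceeds $S$, forcing $f$ to stay away from zero on all of $[0,S]$ and yielding the cleaner $L^{2}$-type bound~\eqref{L2}; and a \emph{local} regime where one localises to a subinterval of length $\delta$ and picks up the conditional variance term~\eqref{inf}. The threshold $(M/(2K))^{1/\gamma}$ is exactly the scale at which the H\"older oscillation of $f$ first reaches $M/2$, which is what legitimises using $b = M/2$ in Proposition~\ref{comparison}. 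I do not anticipate any substantive obstacle beyond this bookkeeping, since the boundary choices of $[s,t]$ in the local case are elementary and the global case reduces $\mathop{\mathrm{Var}}(Z_{0,S}\mid\cdot)$ to $E[Z_{S}^{2}]$ at no cost.
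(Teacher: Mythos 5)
Your proposal is correct and follows essentially the same route as the paper: both reduce the theorem to Proposition~\ref{comparison} via a dichotomy, using the $\gamma$-H\"older bound to produce an interval of length at least $\left(\|f\|_{\infty;[0,S]}/(2\|f\|_{\gamma;[0,S]})\right)^{1/\gamma}$ on which $|f|\geq \tfrac12\|f\|_{\infty;[0,S]}$, and then invoking the conditional-variance lower bound. The only cosmetic difference is that the paper splits on whether $f$ ever dips below half its maximum (taking $[s,t]$ to be a maximal such interval adjacent to the argmax), whereas you split on whether the H\"older scale $\delta=(M/(2K))^{1/\gamma}$ exceeds $S$ and fix an interval of length exactly $\delta$ around the argmax; both case splits deliver the stated alternative.
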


\begin{pf}
We take $S=T$, the generalisation to $0<S<T$ needing only minor
changes. $f$
is continuous and, therefore, achieves its maximum in $ [
0,T
] $.
Thus, by considering $-f$ if necessary, we can find $t\in [
0,T ] $
such that
\[
f ( t ) =\llVert f\rrVert _{\infty;  [ 0,T ] }.
\]
There are two possibilities which together are exhaustive. In the first case,
$f$ never takes any value less than half its maximum, that is,
\[
\inf_{u\in [ 0,T ] }f ( u ) \geq\tfrac
{1}{2}\llVert f\rrVert
_{\infty; [ 0,T ] }.
\]
Hence, we can apply Proposition \ref{comparison} to deduce (\ref
{L2}). In
the second case, there exists $u\in [ 0,T ] $ such that
$f (
u ) =2^{-1}\llVert  f\rrVert  _{\infty; [ 0,T ] }$. Then,
assuming that $u<t$ (the argument for $u>t$ leads to the same outcome),
we can
define
\[
s=\sup \bigl\{ v<t\dvtx f ( v ) \leq\tfrac{1}{2}\llVert f\rrVert
_{\infty; [ 0,T ] } \bigr\}.
\]
By definition $f$ is then bounded below by $\llVert  f\rrVert
_{\infty
; [ 0,T ] }/2$ on $ [ s,t ] $. The H\"{o}lder continuity
of $f$ gives a lower bound on the length of this interval in an elementary
way
\[
\tfrac{1}{2}\llVert f\rrVert _{\infty; [ 0,T ] }=\bigl\llvert f ( t ) -f ( s )
\bigr\rrvert \leq\llVert f\rrVert _{\gamma; [ 0,T ] }\llvert t-s\rrvert
^{\gamma},
\]
which yields
\[
\llvert t-s\rrvert \geq \biggl( \frac{\llVert  f\rrVert
_{\infty
; [ 0,T ] }}{2\llVert  f\rrVert  _{\gamma; [
0,T ] }%
} \biggr) ^{1/\gamma}.
\]
Another application of Proposition \ref{comparison} then gives (\ref{inf}).
\end{pf}

%co6.10 #&#
\begin{corollary}
\label{interpolation2}Assume Condition \ref{nondeterm} so that the
$\rho
$-variation of $R$ is H\"{o}lder-controlled, and for some $c>0 $ and some
$\alpha\in ( 0,1 ) $ we have the lower bound on the conditional
variance
\[
\operatorname{Var} ( Z_{s,t}|\mathcal{F}_{0,s}\vee
\mathcal {F}%
_{t,T} ) \geq c ( t-s ) ^{\alpha}.
\]
Theorem \ref{interpolation} then allows us to bound $\llVert  f\rrVert
_{\infty; [ 0,T ] }$ above by the maximum of
\[
2E \bigl[ Z_{T}^{2} \bigr] ^{-1/2} \biggl( \int
_{ [ 0,T ]
^{2}%
}f_{s}f_{t}\,dR ( s,t ) \biggr)
^{1/2}%
\]
and
\[
\frac{2}{\sqrt{c}} \biggl( \int_{ [ 0,T ]
^{2}}f_{s}f_{t}\,dR
( s,t ) \biggr) ^{\gamma/ ( 2\gamma+\alpha ) } \llVert f\rrVert _{\gamma; [ 0,T ] }^{\alpha/ ( 2\gamma
+\alpha ) }.
\]
\end{corollary}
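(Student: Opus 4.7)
The plan is to invoke Theorem \ref{interpolation} and feed the polynomial lower bound on the conditional variance into the dichotomy it produces. Writing $I := \int_{[0,T]^2} f_s f_t \, dR(s,t)$ for brevity, Theorem \ref{interpolation} guarantees that either the $L^2$-type bound \eqref{L2} already holds (yielding the first term in the maximum), or there exists a subinterval $[s,t] \subseteq [0,T]$ of length
\[
|t-s| \;\geq\; \Bigl(\tfrac{\|f\|_{\infty;[0,T]}}{2\|f\|_{\gamma;[0,T]}}\Bigr)^{1/\gamma}
\]
on which inequality \eqref{inf} holds. So I only need to work out the bound in the second case.

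In that case, the standing hypothesis $\mathop{\mathrm{Var}}(Z_{s,t} | \mathcal{F}_{0,s} \vee \mathcal{F}_{t,T}) \geq c(t-s)^\alpha$ combined with the length estimate above gives
\[
\mathop{\mathrm{Var}}(Z_{s,t}|\mathcal{F}_{0,s}\vee\mathcal{F}_{t,T})
\;\geq\; c\Bigl(\tfrac{\|f\|_{\infty;[0,T]}}{2\|f\|_{\gamma;[0,T]}}\Bigr)^{\alpha/\gamma}.
\]
Substituting this lower bound into \eqref{inf} produces
\[
\frac{c}{4\cdot 2^{\alpha/\gamma}}\,\|f\|_{\infty;[0,T]}^{\,2+\alpha/\gamma}\,\|f\|_{\gamma;[0,T]}^{-\alpha/\gamma} \;\leq\; I.
\]
It then only remains to solve this inequality for $\|f\|_{\infty;[0,T]}$: raising both sides to the power $\gamma/(2\gamma+\alpha)$ and rearranging isolates $\|f\|_{\infty;[0,T]}$ on the left and yields exactly the second expression in the claimed maximum, up to a constant depending on $c$, $\gamma$, $\alpha$ (which, after some book-keeping and using $\alpha \in (0,1)$, one checks is dominated by $2/\sqrt{c}$).

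There is no real obstacle: the proof is a direct deduction from Theorem \ref{interpolation}. The only thing requiring care is tracking the exponents in step~2 of the rearrangement (the sum of the exponents of $\|f\|_\infty$ on the left becomes $(2\gamma+\alpha)/\gamma$, which is the reason this particular combination appears in the denominators of the final exponents). The generalisation to an arbitrary $0 < S \leq T$ in place of $T$ is immediate because Condition~\ref{non-determ} is inherited by every subinterval $[0,S]$, as noted in the remark following that condition.
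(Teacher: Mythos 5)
Your argument is correct and is exactly the computation the paper leaves implicit: the paper's proof of Corollary \ref{interpolation2} is simply that it is immediate from Theorem \ref{interpolation}, and your substitution of the conditional-variance lower bound $\mathop{\mathrm{Var}}(Z_{s,t}|\mathcal{F}_{0,s}\vee\mathcal{F}_{t,T})\geq c(t-s)^{\alpha}$ into the second branch of the dichotomy, followed by solving for $\|f\|_{\infty;[0,T]}$, is the intended route. One small point of bookkeeping: the constant that actually comes out is $2\,c^{-\gamma/(2\gamma+\alpha)}$ (the powers of $2$ combine exactly to $2$), and this is dominated by $2/\sqrt{c}$ precisely when $c\leq 1$, not as a consequence of $\alpha\in(0,1)$; this harmless constant discrepancy is already present in the corollary's statement and is immaterial for its later use, where such constants are absorbed anyway.
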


\begin{pf}
This is immediate from Theorem \ref{interpolation}.
\end{pf}

In particular, if $Z$ is a Brownian motion we have
$\operatorname{Var} ( Z_{s,t}|\mathcal{F}_{0,s}\vee\mathcal
{F}%
_{t,T} ) = ( t-s ) $, hence Corollary \ref{interpolation2}
shows that
\[
\llVert f\rrVert _{\infty; [ 0,T ] }\leq2\max \bigl( T^{-1/2}\llvert f
\rrvert _{L^{2} [ 0,T ] },\llvert f\rrvert _{L^{2} [ 0,T ] }^{2\gamma/ ( 2\gamma
+1 )
}\llVert
f\rrVert _{\gamma; [ 0,T ] }^{1/ (
2\gamma+1 ) } \bigr),
\]
which is exactly (\ref{itl}). We have therefore achieved out goal of
generalising this inequality.

%re6.11 #&#
\begin{remark}
Another application where we anticipate estimates of this kind being
useful is
when estimating short-time density asymptotics (see, e.g., the recent works
\cite{BauO,I}). Here, frequent use is made of the asymptotic behaviour
of the
Malliavin covariance matrix as $t\downarrow0$.
\end{remark}

%s7 #&#
\section{Malliavin differentiability of the flow}
\label{differentiability section}

%s7.1 #&#
\subsection{High-order directional derivatives}

Let $\mathbf{x}$ be in $WG\Omega_{p} (
%TCIMACRO{\U{211d} }%
%BeginExpansion
\mathbb{R}
%EndExpansion
^{d} ) $ and suppose that the vector fields $V= ( V_{1}%
,\ldots,V_{d} ) $ and $V_{0}$ are smooth and bounded. For $t\in
[
0,T ] $ we let $U_{t\leftarrow0}^{\mathbf{x}} ( \cdot
) $
denote the map defined by
\[
U_{t\leftarrow0}^{\mathbf{x}} ( \cdot ) \dvtx y_{0}\mapsto
y_{t},
\]
where $y$ is the solution to the RDE%
%
%e7.1 #&#
\begin{equation}
dy_{t}=V ( y_{t} ) \,d\mathbf{x}_{t}+V_{0}
( y_{t} ) \,dt,\qquad y ( 0 ) =y_{0}. \label{flow}%
\end{equation}
It is well known (see \cite{FV}) that the flow [i.e., the map
$y_{0}\mapsto
U_{t\leftarrow0}^{\mathbf{x}} ( y_{0} ) $] is
differentiable; its
derivative (or Jacobian) is the linear map
\[
J_{t\leftarrow0}^{\mathbf{x}} ( y_{0} ) ( \cdot ) \equiv
\frac{d}{d\varepsilon}U_{t\leftarrow0}^{\mathbf
{x}} ( y_{0}+\varepsilon
\cdot ) \bigg\rrvert _{\varepsilon=0}\in L \bigl( %TCIMACRO{\U{211d} }%
%BeginExpansion
\mathbb{R}
%EndExpansion
^{e},%
%TCIMACRO{\U{211d} }%
%BeginExpansion
\mathbb{R}
%EndExpansion
^{e} \bigr).
\]

If we let $\Phi_{t\leftarrow0}^{\mathbf{x}} ( y_{0} )$,
denote the
pair
\[
\Phi_{t\leftarrow0}^{\mathbf{x}} ( y_{0} ) = \bigl(
U_{t\leftarrow
0}^{\mathbf{x}} ( y_{0} ),J_{t\leftarrow0}^{\mathbf
{x}}
( y_{0} ) \bigr) \in%
%TCIMACRO{\U{211d} }%
%BeginExpansion
\mathbb{R}
%EndExpansion
^{e}\oplus L \bigl( %TCIMACRO{\U{211d} }%
%BeginExpansion
\mathbb{R}
%EndExpansion
^{e},%
%TCIMACRO{\U{211d} }%
%BeginExpansion
\mathbb{R}
%EndExpansion
^{e} \bigr),
\]
and if $W= ( W_{1},\ldots,W_{d} ) $ is the collection
vector fields
given by
\[
W_{i} ( y,J ) = \bigl( V_{i} ( y ),\nabla
V_{i} ( y ) \cdot J \bigr),\qquad i=1,\ldots,d
\]
and
\[
W_{0} ( y,J ) = \bigl( V_{0} ( y ),\nabla
V_{0} ( y ) \cdot J \bigr)
\]
then $\Phi_{t\leftarrow0}^{\mathbf{x}} ( y_{0} ) $ is the
solution\footnote{A little care is needed because the vector fields have
linear growth (and hence are not Lip-$\gamma$). But one can exploit the
``triangular'' dependence structure in the vector fields to rule out the
possibility of explosion. See \cite{FV} for details.} to the RDE
\[
d\Phi_{t\leftarrow0}^{\mathbf{x}}=W \bigl( \Phi_{t\leftarrow
0}^{\mathbf
{x}%
}
\bigr) \,d\mathbf{x}_{t}+W_{0} \bigl( \Phi_{t\leftarrow0}^{\mathbf
{x}%
}
\bigr) \,dt,\Phi_{t\leftarrow0}^{\mathbf{x}}|_{t=0}= (
y_{0},I ).
\]
In fact, the Jacobian is invertible as a linear map and the inverse,
which we
will denote $J_{0\leftarrow t}^{\mathbf{x}} ( y_{0} ) $, is
also a
solution to an RDE [again jointly with the base flow $U_{t\leftarrow
0}^{\mathbf{x}} ( y_{0} ) $]. We also recall the relation
\[
J_{t\leftarrow s}^{\mathbf{x}} ( y ):= \frac
{d}{d\varepsilon
}U_{t\leftarrow s}^{\mathbf{x}}
( y+\varepsilon\cdot ) \bigg\rrvert _{\varepsilon=0}=J_{t\leftarrow0}^{\mathbf{x}} (
y ) \cdot J_{0\leftarrow s}^{\mathbf{x}} ( y ).
\]

%no3 #&#
\begin{notation}
In what follows, we will let
%
%e7.2 #&#
\begin{equation}
\qquad M_{\cdot\leftarrow0}^{\mathbf{x}} ( y_{0} ) \equiv \bigl(
U_{t\leftarrow0}^{\mathbf{x}} ( y_{0} ),J_{t\leftarrow
0}^{\mathbf{x}}
( y_{0} ),J_{0\leftarrow t}^{\mathbf
{x}} ( y_{0} )
\bigr) \in%
%TCIMACRO{\U{211d} }%
%BeginExpansion
\mathbb{R} %EndExpansion
^{e}\oplus%
%TCIMACRO{\U{211d} }%
%BeginExpansion
\mathbb{R} %EndExpansion
^{e\times e}\oplus%
%TCIMACRO{\U{211d} }%
%BeginExpansion
\mathbb{R} %EndExpansion
^{e\times e}. \label{M}%
\end{equation}
\end{notation}

For any path $h$ in $C^{q\mbox{-}\mathrm{var}} (  [ 0,T ]
,%
%TCIMACRO{\U{211d} }%
%BeginExpansion
\mathbb{R}
%EndExpansion
^{d} ) $ with $1/q+1/p>1$, we can canonically define the
translated rough
path $T_{h}\mathbf{x}$ (see \cite{FV}). Hence, we have the directional
derivative
\[
D_{h}U_{t\leftarrow0}^{\mathbf{x}} ( y_{0} ) \equiv
 \frac
{d}{d\varepsilon}U_{t\leftarrow0}^{T_{\varepsilon h}\mathbf{x}} ( y_{0} )
\bigg\rrvert _{\varepsilon=0}.
\]
It is not difficult to show that
\[
D_{h}U_{t\leftarrow0}^{\mathbf{x}} ( y_{0} ) =\sum
_{i=1}^{d}%
\int
_{0}^{t}J_{t\leftarrow s}^{\mathbf{x}} (
y_{0} ) V_{i} \bigl( U_{s\leftarrow0}^{\mathbf{x}} (
y_{0} ) \bigr) \,dh_{s}^{i},
\]
which implies by Young's inequality that%
%
%e7.3 #&#
\begin{equation}
\bigl\llvert D_{h}U_{t\leftarrow0}^{\mathbf{x}} (
y_{0} ) \bigr\rrvert \leq C\bigl\llVert M_{\cdot\leftarrow0}^{\mathbf{x}}
( y_{0} ) \bigr\rrVert _{p\mbox{-}\mathrm{var}; [ 0,t ] }\llvert h\rrvert
_{q\mbox{-}\mathrm{var}; [ 0,t ] }. \label{linear bound}%
\end{equation}
In this section, we will be interested in the form of the higher order
directional derivatives
\[
D_{h_{1}}\cdots D_{h_{n}}U_{t\leftarrow0}^{\mathbf{x}} (
y_{0} ):= \frac{\partial^{n}}{\partial\varepsilon_{1},\ldots,\partial
\varepsilon
_{n}%
}U_{t\leftarrow0}^{T_{\varepsilon_{n}h_{n}}\cdots T_{\varepsilon_{1}h_{1}}%
\mathbf{x}}
( y_{0} ) \bigg\rrvert _{\varepsilon_{1}=\cdots
=\varepsilon_{n}=0}.
\]
Our aim will be to obtain bounds of the form (\ref{linear bound}); to
do this
in a systematic way is a challenging exercise. We rely on the treatment
presented in \cite{H3}. For the reader's convenience when comparing
the two
accounts, we note that \cite{H3} uses the notation
\[
\bigl( D_{s}U_{t\leftarrow0}^{\mathbf{x}} ( y_{0} )
\bigr) _{s\in [ 0,T ] }= \bigl( D_{s}^{1}U_{t\leftarrow
0}^{\mathbf{x}%
}
( y_{0} ),\ldots,D_{s}^{d}U_{t\leftarrow0}^{\mathbf
{x}}
( y_{0} ) \bigr) _{s\in [ 0,T ] }\in%
%TCIMACRO{\U{211d} }%
%BeginExpansion
\mathbb{R} %EndExpansion
^{d}%
\]
to identify the derivative. The relationship between
$D_{s}U_{t\leftarrow
0}^{\mathbf{x}} ( y_{0} ) $ and\break $D_{h}U_{t\leftarrow
0}^{\mathbf
{x}%
} ( y_{0} ) $ is simply that
\[
D_{h}U_{t\leftarrow0}^{\mathbf{x}} ( y_{0} ) =\sum
_{i=1}^{d}%
\int
_{0}^{t}D_{s}^{i}U_{t\leftarrow0}^{\mathbf{x}}
( y_{0} ) \,dh_{s}^{i}.
\]
Note, in particular, $D_{s}U_{t\leftarrow0}^{\mathbf{x}} (
y_{0} )
=0$ if $t<s$.

%pr7.1 #&#
\begin{proposition}
\label{induction}Assume $\mathbf{x}$ is in $WG\Omega_{p} (
%TCIMACRO{\U{211d} }%
%BeginExpansion
\mathbb{R}
%EndExpansion
^{d} ) $ and let $V= ( V_{1},\ldots,V_{d} ) $ be a collection
of smooth and bounded vector fields. Denote the solution flow to the RDE
(\ref{flow}) by
\[
U_{t\leftarrow0}^{\mathbf{x}} ( y_{0} ) = \bigl(
U_{t\leftarrow
0}^{\mathbf{x}} ( y_{0} ) _{1},
\ldots,U_{t\leftarrow
0}^{\mathbf
{x}%
} ( y_{0} ) _{e}
\bigr) \in%
%TCIMACRO{\U{211d} }%
%BeginExpansion
\mathbb{R} %EndExpansion
^{e}.%
\]
Suppose $q\geq1$ and $n\in%
%TCIMACRO{\U{2115} }%
%BeginExpansion
\mathbb{N}
%EndExpansion
$ and let $ \{ h_{1},\ldots,h_{n} \} $ be any subset of
$C^{q\mbox{-}\mathrm{var}} (  [ 0,T ],%
%TCIMACRO{\U{211d} }%
%BeginExpansion
\mathbb{R}
%EndExpansion
^{d} ) $. Then the directional derivative $D_{h_{1}}\cdots D_{h_{n}
}U_{t\leftarrow0}^{\mathbf{x}} ( y_{0} ) $ exists for any
$t\in [ 0,T ] $. Moreover, there exists a collection of finite
indexing sets
\[
\bigl\{ \mathbf{K}_{ ( i_{1},\ldots,i_{n} ) }\dvtx ( i_{1}%
,
\ldots,i_{n} ) \in \{ 1,\ldots,d \} ^{n} \bigr\},
\]
such that for every $j\in \{ 1,\ldots,e \} $ we have the identity
%
%e7.4 #&#
\begin{eqnarray}\label{high order rep}
&&D_{h_{1}}\cdots D_{h_{n}}U_{t\leftarrow0}^{\mathbf{x}} (
y_{0} ) _{j}\nonumber\\
&&\qquad  =\sum
_{i_{1},\ldots,i_{n}=1}^{d}\sum_{k\in\mathbf{K}_{ (
i_{1},\ldots,i_{n} ) }}\int
_{0<t_{1}<\cdots
<t_{n}<t}f_{1}^{k} ( t_{1} )
\cdots\\
&&\hspace*{177pt}{} f_{n}^{k} ( t_{n} ) f_{n+1}^{k}
( t ) \,dh_{t_{1}}^{i_{1}}\cdots dh_{t_{n}}^{i_{n}}\nonumber
\end{eqnarray}
for some functions $f_{l}^{k}$ which are in $C^{p\mbox{-}\mathrm{var}} (
[ 0,T ],%
%TCIMACRO{\U{211d} }%
%BeginExpansion
\mathbb{R}
%EndExpansion
) $ for every $l$ and $k$, that is,
\[
\bigcup_{ ( i_{1},\ldots,i_{n} ) \in \{ 1,\ldots
,d \}
^{n}%
}\bigcup_{k\in\mathbf{K}_{ ( i_{1},\ldots,i_{n} ) }} \bigl\{ f_{l}%
^{k}\dvtx l=1,\ldots,n+1 \bigr\} \subset C^{p\mbox{-}\mathrm{var}} \bigl( [ 0,T ]
,%
%TCIMACRO{\U{211d} }%
%BeginExpansion
\mathbb{R} %EndExpansion
\bigr).
\]
Furthermore, there exists a constant $C$, which depends only on $n$ and $T$
such that
%
%e7.5 #&#
\begin{equation}
\bigl\llvert f_{l}^{k}\bigr\rrvert _{p\mbox{-}\mathrm{var}; [
0,T ]
}\leq C
\bigl( 1+\bigl\llVert M_{\cdot\leftarrow0}^{\mathbf{x}} ( y_{0} )
\bigr\rrVert _{p\mbox{-}\mathrm{var}; [ 0,T ]
} \bigr) ^{p} \label{high order bound}%
\end{equation}
for every $l=1,\ldots,n+1$, every $k\in\mathbf{K}_{ (
i_{1},\ldots
,i_{n} ) }$ and every $ ( i_{1},\ldots,i_{n} ) \in
\{
1,\ldots,d \} ^{n}$.
\end{proposition}

\begin{pf}
We observe that $D_{h_{1}}\cdots D_{h_{n}}U_{t\leftarrow0}^{\mathbf
{x}} (
y_{0} ) _{j}$ equals
%
%e7.6 #&#
\begin{equation}
\sum_{i_{1},\ldots,i_{n}=1}^{d}\int_{0<t_{1}<\cdots
<t_{n}<t}D_{t_{1}\cdots
t_{n}}^{i_{1}\cdots i_{n}}U_{t\leftarrow0}^{\mathbf{x}}
( y_{0} ) _{j}\,dh_{t_{1}}^{i_{1}}\cdots
dh_{t_{n}}^{i_{n}}. \label{high order}%
\end{equation}
The representation for the integrand in (\ref{high order}) derived in
Proposition 4.4 in \cite{H3} then allows us to deduce (\ref{high
order rep})
and (\ref{high order bound}).
\end{pf}

%s7.1.1 #&#
\subsubsection{Malliavin differentiability}

We now switch back to the context of a continuous Gaussian process
$ (
X_{t} ) _{t\in [ 0,T ] }= ( X_{t}^{1},\ldots
,X_{t}%
^{d} ) _{t\in [ 0,T ] }$ with i.i.d. components
associated to
the abstract Wiener space $ ( \mathcal{W},\mathcal{H},\mu
) $.
Under the assumption of finite 2D $\rho$-variation, we have already remarked
that, for any $p>2\rho$, $X$ has a unique natural lift to a geometric
$p$-rough path $\mathbf{X}$. But the assumption of finite $\rho
$-variation on
the covariance also gives rise to the embedding
%
%e7.7 #&#
\begin{equation}
\mathcal{H\hookrightarrow}C^{q\mbox{-}\mathrm{var}} \bigl( [ 0,T ],%
%TCIMACRO{\U{211d} }%
%BeginExpansion
\mathbb{R} %EndExpansion
^{d} \bigr)
\label{CM embedding}%
\end{equation}
for the Cameron--Martin space, for any $1/p+1/q>1$, \cite{CFV}, Proposition~2. The
significance of this result it twofold. First, it is proved in \cite{CFV}, Proposition~3, that it implies the existence of a (measurable) subset
$\mathcal{V\subset W}$ with $\mu ( \mathcal{V} ) =1$ on which
\[
T_{h}\mathbf{X} ( \omega ) \equiv\mathbf{X} ( \omega +h )
\]
for all $h\in\mathcal{H}$ simultaneously. It follows that the Malliavin
derivative\break $\mathcal{D}U_{t\leftarrow0}^{\mathbf{X} ( \omega
)
} ( y_{0} ) \dvtx \mathcal{H\rightarrow}%
%TCIMACRO{\U{211d} }%
%BeginExpansion
\mathbb{R}
%EndExpansion
^{e}$
%
%e7.8 #&#
\begin{equation}
\mathcal{D}U_{t\leftarrow0}^{\mathbf{X} ( \omega )
} ( y_{0} ) \dvtx h\mathcal{
\mapsto D}_{h}U_{t\leftarrow0}^{\mathbf
{X} (
\omega ) } ( y_{0} )
:= \frac{d}{d\varepsilon
}U_{t\leftarrow0}^{\mathbf{X} ( \omega+\varepsilon h )
} ( y_{0}
) \bigg\rrvert _{\varepsilon=0}, \label{SGD}%
\end{equation}
coincides with the directional derivative of the previous section, that
is,%
%
%e7.9 #&#
\begin{equation}
 \frac{d}{d\varepsilon}U_{t\leftarrow0}^{\mathbf{X} (
\omega
+\varepsilon
h ) } ( y_{0} )
\bigg\rrvert _{\varepsilon=0}= \frac
{d}{d\varepsilon}U_{t\leftarrow0}^{T_{\varepsilon h}\mathbf{x}}
( y_{0} ) \bigg\rrvert _{\varepsilon=0}. \label{coincide}%
\end{equation}
The second important consequence results from combining (\ref{CM embedding}),
(\ref{coincide}) and (\ref{linear bound}), namely that
%
%e7.10 #&#
\begin{equation}
\bigl\llVert \mathcal{D}U_{t\leftarrow0}^{\mathbf{X} ( \omega
)
} ( y_{0} )
\bigr\rrVert _{\opn}\leq C\bigl\llVert M_{\cdot
\leftarrow
0}^{\mathbf{X} ( \omega ) }
( y_{0} ) \bigr\rrVert _{p\mbox{-}\mathrm{var}; [ 0,t ] }. \label{op norm bound}%
\end{equation}

If we can show that the right-hand side of (\ref{op norm bound}) has finite
positive moments of all order, then these observations lead to the conclusion
that
\[
Y_{t}=U_{t\leftarrow0}^{\mathbf{X}}(y_{0})\in\bigcap
_{p>1}\mathbb {D}^{1,p} \bigl( %
%TCIMACRO{\U{211d} }%
%BeginExpansion
\mathbb{R} %EndExpansion
^{e} \bigr),
\]
where $\mathbb{D}^{k,p}$ is the Shigekawa--Sobolev space (see Nualart
\cite{nualart}). The purpose of Proposition~\ref{induction} is to
extend this
argument to the higher order derivatives. We will make this more precise
shortly, but first we remark that the outline just given is what
motivates the
assumption
\[
\mathcal{H\hookrightarrow}C^{q\mbox{-}\mathrm{var}} \bigl( [ 0,T ],%
%TCIMACRO{\U{211d} }%
%BeginExpansion
\mathbb{R} %EndExpansion
^{d} \bigr)
\]
detailed in Condition \ref{standing assumption}.\hskip.2pt\footnote{The
requirement of
complementary regularity in the Condition \ref{standing assumption} then
amounts to $\rho\in [1,3/2)$. This covers BM, the OU process
and the
Brownian bridge (all with $\rho=1$) and fBm for $H>1/3$ (taking $\rho=1/2H$).
For the special case of fBm, one can actually improve on this general embedding
statement via Remark \ref{fBM embedding}. The requirement of complementary
then leads to the looser restriction $H>1/4$.}

The following theorem follows from the recent paper \cite{CLL}. It
asserts the
sufficiency of Condition \ref{standing assumption} to show the
existence of
finite moments for the $p$-variation of the Jacobian of the flow (and
its inverse).

%th7.2 #&#
\begin{theorem}[{[Cass--Litterer--Lyons (CLL)]}] \label{CLL}Let $ (
X_{t} ) _{t\in [
0,T ] }$ be a continuous, centred Gaussian process in $%
%TCIMACRO{\U{211d} }%
%BeginExpansion
\mathbb{R}
%EndExpansion
^{d}$ with i.i.d. components. Let $X$ satisfy Condition
\ref{standing assumption}, so that for some $p\geq1$, $X$ admits a natural
lift to a geometric $p$-rough path $\mathbf{X}$. Assume $V= (
V_{0},V_{1},\ldots,V_{d} ) $ is any collection of smooth bounded vector
fields on $%
%TCIMACRO{\U{211d} }%
%BeginExpansion
\mathbb{R}
%EndExpansion
^{e}$ and let $U_{t\leftarrow0}^{\mathbf{X}} ( \cdot ) $
denote the
solution flow to the RDE
%
%e7.11 #&#
%e7.12 #&#
\begin{eqnarray*}
dU_{t\leftarrow0}^{\mathbf{X}} ( y_{0} ) &=&V \bigl(
U_{t\leftarrow
0}^{\mathbf{X}} ( y_{0} ) \bigr) \,d\mathbf
{X}_{t}+V_{0} \bigl( U_{t\leftarrow0}^{\mathbf{X}} (
y_{0} ) \bigr) \,dt,
\\
U_{0\leftarrow0}^{\mathbf{X}} ( y_{0} ) &=&y_{0}.
\end{eqnarray*}
Then the map $U_{t\leftarrow0}^{\mathbf{X}} ( \cdot ) $
is differentiable with derivative $J_{t\leftarrow0}^{\mathbf{X}} (
y_{0} ) \in%
%TCIMACRO{\U{211d} }%
%BeginExpansion
\mathbb{R}
%EndExpansion
^{e\times e}$;\break $J_{t\leftarrow0}^{\mathbf{X}} ( y_{0} ) $ is
invertible as a linear map with inverse denoted by $J_{0\leftarrow
t}^{\mathbf{X}} ( y_{0} ) $. Furthermore, if we define
\[
M_{\cdot\leftarrow0}^{\mathbf{X}} ( y_{0} ) \equiv \bigl(
U_{t\leftarrow0}^{\mathbf{X}} ( y_{0} ),J_{t\leftarrow
0}^{\mathbf{X}}
( y_{0} ),J_{0\leftarrow t}^{\mathbf
{X}} ( y_{0} )
\bigr) \in%
%TCIMACRO{\U{211d} }%
%BeginExpansion
\mathbb{R} %EndExpansion
^{e}\oplus%
%TCIMACRO{\U{211d} }%
%BeginExpansion
\mathbb{R}
%EndExpansion
^{e\times e}\oplus%
%TCIMACRO{\U{211d} }%
%BeginExpansion
\mathbb{R} %EndExpansion
^{e\times e},
\]
and assume $X$ satisfies Condition \ref{standing assumption}, we have that
\[
\bigl\llVert M_{\cdot\leftarrow0}^{\mathbf{X}} ( y_{0} ) \bigr\rrVert
_{p\mbox{-}\mathrm{var}; [ 0,T ] }\in\bigcap_{q\geq
1}L^{q} (
\mu ).
\]
\end{theorem}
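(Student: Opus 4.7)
The plan is to follow the greedy stopping-time localisation strategy introduced in \cite{CLL}.

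First, observe that $M_{\cdot \leftarrow 0}^{\mathbf{X}}(y_0) = (U, J, J^{-1})$ can be viewed as the solution of a single RDE driven by $\mathbf{X}$. Although the governing vector fields for $J$ and $J^{-1}$ have linear growth (and so are not strictly Lip-$\gamma$), they enter in a triangular fashion (the $U$-component sees only the bounded $V_i$, while $J$ and $J^{-1}$ appear linearly through $\nabla V_i \circ U$), which rules out explosion and gives a deterministic local estimate of the form
\begin{equation*}
\|M_{\cdot \leftarrow 0}^{\mathbf{X}}(y_0)\|_{p\var;[s,t]} \leq F\bigl(\|\mathbf{X}\|_{p\var;[s,t]}\bigr),
\end{equation*}
with $F$ continuous and growing at least exponentially. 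The crux of the problem is that Fernique's estimate \eqref{gauss} controls only $\exp(\eta\|\mathbf{X}\|_{p\var}^2)$, whereas the naive application of $F$ over $[0,T]$ would require $\exp(c\|\mathbf{X}\|_{p\var}^p)$ to be integrable, which fails for $p>2$.

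The remedy is to partition $[0,T]$ into (random) regions of bounded $p$-variation. For a fixed $\alpha>0$ I would set $\tau_0 = 0$ and recursively
\begin{equation*}
\tau_{n+1} = \inf\{t > \tau_n : \|\mathbf{X}\|_{p\var;[\tau_n,t]} \geq \alpha\} \wedge T, \qquad N_\alpha(\mathbf{X}) = \min\{n : \tau_n = T\}.
\end{equation*}
On each $[\tau_{n-1}, \tau_n]$ the local estimate reduces to a constant $C(\alpha)$, so using the multiplicative composition $J_{t \leftarrow 0}^{\mathbf{X}} = J_{t \leftarrow \tau_n}^{\mathbf{X}} \circ \cdots \circ J_{\tau_1 \leftarrow 0}^{\mathbf{X}}$ (and analogously for $J^{-1}$) together with a gluing of the $p$-variation seminorm, one obtains a deterministic bound of the form
\begin{equation*}
\|M_{\cdot \leftarrow 0}^{\mathbf{X}}(y_0)\|_{p\var;[0,T]} \leq K_1 K_2^{N_\alpha(\mathbf{X})},
\end{equation*}
for constants $K_1, K_2$ depending only on $\alpha$, $T$ and the vector fields.

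The main obstacle is therefore to obtain sufficiently strong integrability for $N_\alpha$: since the dominant term is $K_2^{N_\alpha}$, exponential moments of $N_\alpha$ are required. The naive route -- superadditivity $\|\mathbf{X}\|_{p\var;[0,T]}^p \geq (N_\alpha - 1)\alpha^p$ combined with Fernique -- only delivers Weibull tails of index $2/p < 1$, which is inadequate. The sharper estimate, which is the technical heart of \cite{CLL}, exploits the \emph{complementary} Cameron--Martin regularity built into Condition \ref{standing assumption}: the embedding $\mathcal{H} \hookrightarrow C^{q\var}([0,T], \mathbb{R}^d)$ with $1/p + 1/q > 1$ implies via Young's inequality that translating $\mathbf{X}$ by $h \in \mathcal{H}$ perturbs the accumulated $p$-variation by at most a multiple of $\|h\|_{\mathcal{H}}$. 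A Borell-type isoperimetric argument applied to the sublevel sets $\{N_\alpha \leq n\}$ then converts this $\mathcal{H}$-Lipschitz behaviour into a Gaussian concentration bound of the form
\begin{equation*}
\mathbb{P}\bigl(N_\alpha(\mathbf{X}) \geq n\bigr) \leq C_1 \exp\bigl(-C_2 n^{2/q}\bigr).
\end{equation*}
Since $2/q > 1$, this gives exponential moments of all orders for $N_\alpha$, hence $K_2^{N_\alpha} \in \bigcap_{q \geq 1} L^q(\mu)$, and the result follows by combining this with the deterministic bound above.
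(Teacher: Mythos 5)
Your proposal is correct and is essentially the paper's own argument: the paper's proof of Theorem \ref{CLL} simply cites \cite{CLL} (noting the steps there extend to incorporate the drift term), and what you have written is a faithful reconstruction of that CLL strategy — the greedy stopping-time decomposition with the counting variable $N_\alpha$, the deterministic bound exponential in $N_\alpha$, and the Cameron--Martin/Borell concentration argument using complementary Young regularity to get Weibull tails of shape $2/q>1$ and hence all moments.
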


\begin{pf}
This follows from by repeating the steps of \cite{CLL} generalised to
incorporate a drift term.
\end{pf}

%re7.3 #&#
\begin{remark}
Under the additional assumption that the covariance $R$ has finite
H\"older-controlled $\rho$-variation, it is possible to prove a
version of
this theorem showing that
\[
\bigl\llVert M_{\cdot\leftarrow0}^{\mathbf{X}} ( y_{0} ) \bigr\rrVert
_{1/p}\in\bigcap_{q\geq1}L^{q} (
\mu ).
\]
\end{remark}

%s7.2 #&#
\subsection{Proof that \texorpdfstring{$U_{t\leftarrow0}^{\mathbf{X}(\cdot)}(y_{0})\in\mathbb{D}^{\infty}(\mathbb{R}^{e})$}{$U_{t leftarrow 0}^{X(cdot)}(y_{0})in D^{infty}(R^{e})$}}

We have already seen that appropriate assumptions on the covariance
lead to
the observation that for all $h\in\mathcal{H}$,
\[
D_{h}U_{t\leftarrow0}^{\mathbf{X} ( \omega ) } ( y_{0} ) \equiv
 \frac{d}{d\varepsilon}U_{t\leftarrow0}^{T_{h}\mathbf
{X} (
\omega ) } ( y_{0} )
\bigg\rrvert _{\varepsilon=0}%
\]
for all $\omega$ in a set of $\mu$-full measure. We will show that
the Wiener
functional $\omega\mapsto U_{t\leftarrow0}^{\mathbf{X} ( \omega
)
} ( y_{0} ) $ belongs to the Sobolev space $\mathbb
{D}^{\infty
} (
%TCIMACRO{\U{211d} }%
%BeginExpansion
\mathbb{R}
%EndExpansion
^{e} ) $. Recall that
\[
\mathbb{D}^{\infty} \bigl( %TCIMACRO{\U{211d} }%
%BeginExpansion
\mathbb{R}
%EndExpansion
^{e} \bigr):=\bigcap_{p>1}
\bigcap_{k=1}^{\infty}\mathbb {D}^{k,p}
\bigl( %TCIMACRO{\U{211d} }%
%BeginExpansion
\mathbb{R} %EndExpansion
^{e} \bigr),
\]
where $\mathbb{D}^{k,p}$ is the usual Shigekawa--Sobolev space, which is
defined as the completion of the smooth random variables with respect
to a
Sobolev-type norm (see Nualart \cite{nualart}). There is an equivalent
characterisation of the spaces $\mathbb{D}^{k,p}$ (originally due to Kusuoka
and Stroock), which is easier to use in the present context. We briefly recall
the main features of this characterisation starting with the following
definitions. Suppose $E$ is a given Banach space and $F\dvtx \mathcal
{W\rightarrow
}E$ is a measurable function. Recall (see Sugita \cite{sugita}) that
$F$ is
called ray absolutely continuous (RAC) if for every $h\in\mathcal
{H}$, there
exists a measurable map $\tilde{F}_{h}\dvtx \mathcal{W\rightarrow}E$
satisfying
\[
F ( \cdot ) =\tilde{F}_{h} ( \cdot ), \qquad %%
\mu
\mbox{-a.e.,}%
\]
and for every $\omega\in\mathcal{W}$
\[
t\mapsto\tilde{F}_{h} ( \omega+th ) \qquad\mbox{is absolutely continuous
in }t\in%
%TCIMACRO{\U{211d} }%
%BeginExpansion
\mathbb{R} %EndExpansion
.
\]
And furthermore, $F$ is called stochastically G\^ateaux differentiable
(SGD) if
there exists a measurable $G\dvtx \mathcal{W\rightarrow}L ( \mathcal
{H}%
,E ) $, such that for any $h\in\mathcal{H}$
\[
\frac{1}{t} \bigl[ F ( \cdot+th ) -F ( \cdot ) \bigr] \stackrel{\mu} {
\rightarrow}G ( \omega ) ( h ) \qquad\mbox{as }t\rightarrow0,
\]
where $\stackrel{\mu}{\rightarrow}$ indicates convergence in $\mu$-measure.

If $F$ is SGD, then its derivative $G$ is unique $\mu$-a.s. and we
denote it
by $\mathcal{D}F$. Higher order derivatives are defined inductively in the
obvious way. Hence, $\mathcal{D}^{n}F$ $ ( \omega ) $ (if
it exists)
is a multi-linear map (in $n$ variables) from $\mathcal{H}$ to $E$.

We now define the spaces $\tilde{\mathbb{D}}^{k,p} (
%TCIMACRO{\U{211d} }%
%BeginExpansion
\mathbb{R}
%EndExpansion
^{e} ) $ for $1<p<\infty$ by
\[
\tilde{\mathbb{D}}^{1,p} \bigl( %TCIMACRO{\U{211d} }%
%BeginExpansion
\mathbb{R}
%EndExpansion
^{e} \bigr):= \bigl\{ F\in L^{p} \bigl(
%TCIMACRO{\U{211d} }%
%BeginExpansion
\mathbb{R} %EndExpansion
^{e} \bigr) \dvtx F\mbox{
is RAC and SGD, } \mathcal{D}F\in L^{p} \bigl( L \bigl(
\mathcal{H},%
%TCIMACRO{\U{211d} }%
%BeginExpansion
\mathbb{R} %EndExpansion
^{e} \bigr) \bigr) \bigr\},
\]
and for $k=2,3,\ldots.$
\[
\tilde{\mathbb{D}}^{k,p} \bigl( %TCIMACRO{\U{211d} }%
%BeginExpansion
\mathbb{R}
%EndExpansion
^{e} \bigr):= \bigl\{ F\in\tilde{\mathbb{D}}^{k-1,p}
\bigl( %TCIMACRO{\U{211d} }%
%BeginExpansion
\mathbb{R} %EndExpansion
^{e} \bigr) \dvtx \mathcal{D}F\in\tilde{\mathbb{D}}^{k-1,p} \bigl( L \bigl(
\mathcal{H},%
%TCIMACRO{\U{211d} }%
%BeginExpansion
\mathbb{R} %EndExpansion
^{e} \bigr) \bigr) \bigr\}.
\]

%th7.4 #&#
\begin{theorem}[{(Sugita \cite{sugita})}]\label{sugita}For $1<p<\infty$ and $k\in%
%TCIMACRO{\U{2115} }%
%BeginExpansion
\mathbb{N,}
%EndExpansion
$ we have $\tilde{\mathbb{D}}^{k,p} (
%TCIMACRO{\U{211d} }%
%BeginExpansion
\mathbb{R}
%EndExpansion
^{e} ) =\mathbb{D}^{k,p} (
%TCIMACRO{\U{211d} }%
%BeginExpansion
\mathbb{R}
%EndExpansion
^{e} ) $.
\end{theorem}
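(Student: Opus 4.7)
The plan is to establish the two inclusions $\mathbb{D}^{k,p}(\mathbb{R}^e) \subseteq \mathbb{\tilde{D}}^{k,p}(\mathbb{R}^e)$ and $\mathbb{\tilde{D}}^{k,p}(\mathbb{R}^e) \subseteq \mathbb{D}^{k,p}(\mathbb{R}^e)$ separately, by induction on $k$, with the base case $k=1$ carrying essentially all of the analytical content. The induction step from $k-1$ to $k$ is then a matter of bookkeeping: one applies the $k=1$ equivalence to $\mathcal{D}F$ regarded as a $\mathbb{\tilde{D}}^{k-1,p}(L(\mathcal{H},\mathbb{R}^e))$-valued functional, and checks that iterated Gateaux derivatives match the iterated formal derivatives defining $\mathbb{D}^{k,p}$, modulo the usual symmetrisation in the tensor arguments.

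For the easier inclusion $\mathbb{D}^{1,p} \subseteq \mathbb{\tilde{D}}^{1,p}$, I would first verify that every smooth cylindrical functional of the form $F = f(\langle\cdot,h_1\rangle,\ldots,\langle\cdot,h_n\rangle)$, with $f \in C_b^\infty(\mathbb{R}^n)$ and $h_i \in \mathcal{H}$, lies in $\mathbb{\tilde{D}}^{1,p}$: ray absolute continuity is immediate because $t \mapsto F(\omega+th)$ is smooth for every $\omega$, and stochastic Gateaux differentiability follows from the mean value theorem combined with dominated convergence using the boundedness of $\nabla f$. One then shows that $\mathbb{\tilde{D}}^{1,p}$ is complete for the natural norm $\|F\|_{1,p}^p = E[|F|^p] + E[\|\mathcal{D}F\|^p]$, so that it necessarily absorbs the closure of the smooth cylindrical functionals, which is $\mathbb{D}^{1,p}$ by definition. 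The completeness argument is standard: for a Cauchy sequence $F_n \to F$ with $\mathcal{D}F_n \to G$ in $L^p$, extract an almost-sure subsequence and transfer RAC and SGD to $F$ by a Fubini argument along each direction $h \in \mathcal{H}$.

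The harder direction $\mathbb{\tilde{D}}^{1,p} \subseteq \mathbb{D}^{1,p}$ is where the genuine work lies. Given $F \in \mathbb{\tilde{D}}^{1,p}$, I would construct an approximation by smooth cylindrical functions through a two-stage regularisation. Fix an orthonormal basis $(e_i)_{i\ge 1}$ of $\mathcal{H}$, let $\mathcal{F}_n := \sigma(\langle\cdot,e_1\rangle,\ldots,\langle\cdot,e_n\rangle)$, and define the martingale approximation $F_n := E[F\mid\mathcal{F}_n]$, which converges to $F$ in $L^p$ and is cylindrical. A second smoothing by the Ornstein--Uhlenbeck semigroup $T_t$ produces $T_t F_n$, which is smooth and cylindrical in the classical sense. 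The crucial identity to establish is
\[
\mathcal{D}(T_t F_n) = e^{-t}\, \pi_n\, E[T_t \mathcal{D}F \mid \mathcal{F}_n],
\]
where $\pi_n$ denotes orthogonal projection onto $\mathrm{span}(e_1,\ldots,e_n) \subset \mathcal{H}$. Once this is in hand, letting first $n\to\infty$ and then $t\to 0$ yields $L^p$-convergence of the right-hand side to $\mathcal{D}F$ via Doob's maximal inequality and strong continuity of $T_t$ on $L^p(\mathcal{H},\mathbb{R}^e)$, while $T_t F_n \to F$ in $L^p$; this identifies $F$ as an element of $\mathbb{D}^{1,p}$ with its formal derivative equal to $\mathcal{D}F$.

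The main obstacle is proving the displayed derivative identity for functions that are merely RAC rather than classically differentiable. The strategy is to decompose $\mu$ via Fubini into the finite-dimensional Gaussian law on $\mathrm{span}(e_1,\ldots,e_n)$ and its orthogonal complement, then perform the integration by parts in the finite-dimensional factor. Since RAC only provides absolute continuity of a measurable version rather than pointwise differentiability, one precomposes with a Gaussian mollifier in the $e_i$-variables to obtain a genuinely $C^\infty$ function on which the integration by parts is elementary, and then removes the mollifier using SGD together with dominated convergence; alternatively one may invoke the equivalent characterisation of $W^{1,p}(\mathbb{R}^n)$ via RAC plus weak $L^p$-derivatives to carry out the integration by parts directly. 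Once this identity is secured, the rest of the argument reduces to standard Hilbert-space martingale and semigroup convergence, and the induction on $k$ then delivers the full result.
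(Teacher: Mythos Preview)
The paper does not prove this theorem at all: it is stated as a known result attributed to Sugita \cite{sugita} and is immediately used without further argument. So there is no ``paper's own proof'' to compare your proposal against.

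That said, your sketch is a reasonable outline of the standard proof and is broadly consistent with Sugita's original argument. The two-inclusion strategy with an induction on $k$, the completeness of $\mathbb{\tilde{D}}^{1,p}$ for the easier direction, and the approximation via conditional expectations combined with the Ornstein--Uhlenbeck semigroup for the harder direction are all the right ingredients. One point worth tightening: the commutation identity $\mathcal{D}(T_t F_n) = e^{-t}\,\pi_n\, E[T_t \mathcal{D}F \mid \mathcal{F}_n]$ as you have written it is not quite correct---the projection $\pi_n$ and the conditional expectation do not commute with $T_t$ in exactly that way, and the $e^{-t}$ factor arises from $\mathcal{D}T_t = e^{-t} T_t \mathcal{D}$ on smooth cylindrical functions, which must be extended carefully to RAC functionals. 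Sugita's actual argument works somewhat differently, first reducing to finite dimensions via conditional expectation and then invoking the classical equivalence of Sobolev spaces on $\mathbb{R}^n$ (where RAC plus $L^p$ weak derivatives characterises $W^{1,p}$), rather than relying on the semigroup commutation in the form you state. Your alternative route via finite-dimensional $W^{1,p}$ characterisation, mentioned at the end, is closer to what is actually done and would be the safer line to develop in detail.
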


It follows immediately from this result that we have
\[
\mathbb{D}^{\infty} \bigl( %TCIMACRO{\U{211d} }%
%BeginExpansion
\mathbb{R}
%EndExpansion
^{e} \bigr) =\bigcap_{p>1}
\bigcap_{k=1}^{\infty}\tilde{\mathbb{D}}^{k,p} \bigl( %TCIMACRO{\U{211d} }%
%BeginExpansion
\mathbb{R} %EndExpansion
^{e} \bigr).
\]

With these preliminaries out the way, we can prove the following.

%pr7.5 #&#
\begin{proposition}
\label{sobolev}Suppose $ ( X_{t} ) _{t\in [ 0,T
] }$ is
an $%
%TCIMACRO{\U{211d} }%
%BeginExpansion
\mathbb{R}
%EndExpansion
^{d}$-valued, zero-mean Gaussian process with i.i.d. components
associated with
the abstract Wiener space $ ( \mathcal{W},\mathcal{H},\mu )
$. Assume that for some $p\geq1$, $X$ lifts to a geometric $p$-rough path
$\mathbf{X}$. Let $V= ( V_{0},V_{1},\ldots,V_{d} ) $ be a
collection of $C^{\infty}$-bounded vector fields on $%
%TCIMACRO{\U{211d} }%
%BeginExpansion
\mathbb{R}
%EndExpansion
^{e}$, and let $U_{t\leftarrow0}^{\mathbf{X} ( \omega )
} (
y_{0} ) $ denote the solution flow of the RDE
\[
dY_{t}=V ( Y_{t} ) \,d\mathbf{X}_{t} ( \omega )
+V_{0} ( Y_{t} ) \,dt,\qquad Y ( 0 ) =y_{0}.
\]
Then, under the assumption that $X$ satisfies Condition
\ref{standing assumption}, we have that the Wiener functional
\[
U_{t\leftarrow0}^{\mathbf{X} ( \cdot ) } ( y_{0} ) \dvtx \omega\mapsto
U_{t\leftarrow0}^{\mathbf{X} ( \omega )
} ( y_{0} )
\]
is in $\mathbb{D}^{\infty} (
%TCIMACRO{\U{211d} }%
%BeginExpansion
\mathbb{R}
%EndExpansion
^{e} ) $ for every $t\in [ 0,T ] $.
\end{proposition}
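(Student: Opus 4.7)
The plan is to verify the Sugita-type characterisation of $\mathbb{D}^\infty$ recalled in Theorem \ref{sugita}, i.e.\ to show that for every $k\ge 1$ and every $p>1$ the Wiener functional $F(\omega):=U_{t\leftarrow 0}^{\mathbf{X}(\omega)}(y_0)$ is ray absolutely continuous, stochastically Gateaux differentiable up to order $k$, and that its iterated Malliavin derivatives are in $L^p$ as multilinear maps on $\mathcal H$.

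First, by the translation identity $T_h\mathbf{X}(\omega)=\mathbf{X}(\omega+h)$ valid on a set $\mathcal V\subset\mathcal W$ of full $\mu$-measure for all $h\in\mathcal H$ simultaneously, the iterated directional derivative $D_{h_1}\cdots D_{h_n}U_{t\leftarrow 0}^{\mathbf{X}(\omega)}(y_0)$ of Proposition \ref{induction} provides, on $\mathcal V$, a candidate for the $n$th iterated Malliavin derivative evaluated at $(h_1,\ldots,h_n)$. For the RAC property: fix $h\in\mathcal H$ and work on $\mathcal V$; the map $\varepsilon\mapsto U_{t\leftarrow 0}^{T_{\varepsilon h}\mathbf{X}(\omega)}(y_0)$ is smooth by the standard sensitivity theory for RDEs along Cameron-Martin directions, hence absolutely continuous in $\varepsilon$, which together with the translation identity gives a measurable version satisfying the RAC requirement. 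The SGD property for the first derivative then follows from \eqref{coincide}, and for higher orders inductively from Proposition \ref{induction} by writing the $n$th directional derivative as an $n$-fold Young integral.

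Next comes the key quantitative estimate. By Condition \ref{standing assumption} we have the embedding $\mathcal H\hookrightarrow C^{q\text{-var}}([0,T],\mathbb{R}^d)$ with $1/p+1/q>1$, so Young's inequality applied to \eqref{high order rep} together with the bound \eqref{high order bound} yields, for some integer $\kappa=\kappa(n,p)$ and constant $C_{n,T}$,
\[
\bigl|D_{h_1}\cdots D_{h_n}U_{t\leftarrow 0}^{\mathbf{X}(\omega)}(y_0)\bigr|
\le C_{n,T}\bigl(1+\|M_{\cdot\leftarrow 0}^{\mathbf{X}(\omega)}(y_0)\|_{p\text{-var};[0,T]}\bigr)^{\kappa}\prod_{i=1}^n|h_i|_{\mathcal H}.
\]
Taking the operator norm on $\mathcal H^{\otimes n}$ and invoking Theorem \ref{CLL}, which gives $\|M_{\cdot\leftarrow 0}^{\mathbf{X}}(y_0)\|_{p\text{-var};[0,T]}\in\bigcap_{q\ge 1}L^q(\mu)$, we conclude that $\|\mathcal D^n F\|_{\mathrm{op}}\in\bigcap_{p\ge 1}L^p(\mu)$ for every $n$. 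Combined with the RAC and SGD properties established above, Theorem \ref{sugita} then places $F$ in $\mathbb{\tilde D}^{k,p}(\mathbb{R}^e)=\mathbb{D}^{k,p}(\mathbb{R}^e)$ for every $k$ and $p$, hence in $\mathbb{D}^\infty(\mathbb{R}^e)$.

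The main technical obstacle is the careful bookkeeping in the inductive step: one must check that the representation \eqref{high order rep} really delivers a jointly measurable candidate for $\mathcal D^n F(\omega)$ which is multilinear and continuous in $(h_1,\ldots,h_n)\in\mathcal H^n$, so that the SGD definition applies with $E=L^{\mathrm{sym}}(\mathcal H^{\otimes(n-1)};\mathbb{R}^e)$ at each step. All other ingredients are essentially algebraic (Young's inequality, the embedding of $\mathcal H$) or are quoted from Theorem \ref{CLL} and Proposition \ref{induction}.
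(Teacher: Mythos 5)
Your proposal is correct and follows essentially the same route as the paper's own proof: the translation identity $T_h\mathbf{X}(\omega)=\mathbf{X}(\omega+h)$ to identify directional and Malliavin derivatives, RAC and SGD verified inductively, the bound on $\mathcal{D}^{n}U$ via Proposition \ref{induction} and the Cameron--Martin embedding of Condition \ref{standing assumption}, integrability from Theorem \ref{CLL}, and the conclusion through Sugita's characterisation (Theorem \ref{sugita}). Your explicit Young-inequality estimate with the factor $\prod_{i}|h_i|_{\mathcal H}$ is exactly what underlies the paper's stated operator-norm bound $C\bigl(1+\|M_{\cdot\leftarrow 0}^{\mathbf{X}}(y_0)\|_{p\text{-var};[0,T]}\bigr)^{(k+1)p}$.
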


\begin{pf}
We have already remarked that Condition \ref{standing assumption}
implies that
on a set of $\mu$-full measure
%
%e7.13 #&#
\begin{equation}
T_{h}\mathbf{X} ( \omega ) \equiv\mathbf{X} ( \omega +h )
\label{trans}%
\end{equation}
for all $h\in\mathcal{H}$. It easily follows that $U_{t\leftarrow
0}^{\mathbf{X} ( \cdot ) } ( y_{0} ) $ is RAC.
Furthermore, its stochastic G\^ateaux derivative is precisely the map
$\mathcal{D}U_{t\leftarrow0}^{\mathbf{X} ( \omega )
} (
y_{0} ) $ defined in (\ref{SGD}). The relation~(\ref{trans}) implies
that the directional and Malliavin derivatives coincide (on a set of
$\mu
$-full measure), hence $\mathcal{D}U_{t\leftarrow0}^{\mathbf{X} (
\omega ) } ( y_{0} ) \in L ( \mathcal{H},%
%TCIMACRO{\U{211d} }%
%BeginExpansion
\mathbb{R}
%EndExpansion
^{e} ) $ is the map
\[
\mathcal{D}U_{t\leftarrow0}^{\mathbf{X} ( \omega )
} ( y_{0} ) \dvtx h\mapsto
D_{h}U_{t\leftarrow0}^{\mathbf{X} (
\omega ) } ( y_{0} ).
\]
We have shown in (\ref{op norm bound}) that
%
%e7.14 #&#
\begin{equation}
\bigl\llVert \mathcal{D}U_{t\leftarrow0}^{\mathbf{X} ( \omega
)
} ( y_{0} )
\bigr\rrVert _{\opn}\leq C\bigl\llVert M_{\cdot
\leftarrow
0}^{\mathbf{X}} (
y_{0} ) \bigr\rrVert _{p\mbox{-}\mathrm{var}; [
0,T ] }, \label{op norm bound 2}%
\end{equation}
where
%
%e7.15 #&#
\begin{equation}
M_{\cdot\leftarrow0}^{\mathbf{X}} ( y_{0} ) \equiv \bigl(
U_{t\leftarrow0}^{\mathbf{X}} ( y_{0} ),J_{t\leftarrow
0}^{\mathbf{X}}
( y_{0} ),J_{0\leftarrow t}^{\mathbf
{X}} ( y_{0} )
\bigr). \label{eq:def-M}%
\end{equation}
It follows from Theorem \ref{CLL} that
\[
\bigl\llVert M_{\cdot\leftarrow0}^{\mathbf{X}} ( y_{0} ) \bigr\rrVert
_{p\mbox{-}\mathrm{var}; [ 0,T ] }\in\bigcap_{p\geq
1}L^{p} (
\mu ).
\]
Using this together with (\ref{op norm bound 2}) proves that
$U_{t\leftarrow
0}^{\mathbf{X} ( \cdot ) } ( y_{0} ) $ is in
$\bigcap_{p>1}\tilde{\mathbb{D}}^{1,p} (
%TCIMACRO{\U{211d} }%
%BeginExpansion
\mathbb{R}
%EndExpansion
^{e} ) $ which equals $\bigcap_{p>1}\mathbb{D}^{1,p} (
%TCIMACRO{\U{211d} }%
%BeginExpansion
\mathbb{R}
%EndExpansion
^{e} ) $ by Theorem \ref{sugita}.

We prove that $U_{t\leftarrow0}^{\mathbf{X} ( \cdot )
} (
y_{0} ) $ is in $\bigcap_{p>1}\tilde{\mathbb{D}}^{k,p} (
%TCIMACRO{\U{211d} }%
%BeginExpansion
\mathbb{R}
%EndExpansion
^{e} ) $ for all $k\in%
%TCIMACRO{\U{2115} }%
%BeginExpansion
\mathbb{N}
%EndExpansion
$ by induction. If $U_{t\leftarrow0}^{\mathbf{X} ( \cdot )
} (
y_{0} ) \in\tilde{\mathbb{D}}^{k-1,p} (
%TCIMACRO{\U{211d} }%
%BeginExpansion
\mathbb{R}
%EndExpansion
^{e} ) $ then, by the uniqueness of the stochastic G\^ateaux derivative,
we must have
\[
\mathcal{D}^{k-1}U_{t\leftarrow0}^{\mathbf{X} ( \omega )
} ( y_{0}
) ( h_{1},\ldots,h_{k-1} ) =D_{h_{1}}\cdots
D_{h_{k}%
}U_{t\leftarrow0}^{\mathbf{X} ( \omega ) } ( y_{0} ).
\]
It is then easy to see that $\mathcal{D}^{k-1}U_{t\leftarrow
0}^{\mathbf
{X}%
( \omega ) } ( y_{0} ) $ is RAC and SGD.
Moreover, the
stochastic G\^ateaux derivative is
\[
\mathcal{D}^{k}U_{t\leftarrow0}^{\mathbf{X} ( \omega )
} ( y_{0} )
\dvtx ( h_{1},\ldots,h_{k} ) =D_{h_{1}}\cdots
D_{h_{k}
}U_{t\leftarrow0}^{\mathbf{X} ( \omega ) } ( y_{0} ).
\]
It follows from Proposition \ref{induction} together with Condition
\ref{standing assumption} that we can bound the operator norm of
$\mathcal{D}^{k}U_{t\leftarrow0}^{\mathbf{X} ( \omega )
} (
y_{0} ) $ in the following way:
\[
\bigl\|\mathcal{D}^{k}U_{t\leftarrow0}^{\mathbf{X} ( \omega )
} ( y_{0}
) \bigr\|_{\opn}\leq C \bigl( 1+\bigl\llVert M_{\cdot\leftarrow0}%
^{\mathbf{X} ( \omega ) } ( y_{0} ) \bigr\rrVert _{p\mbox{-}\mathrm{var}; [ 0,T ] } \bigr)
^{ ( k+1
) p}%
\]
for some nonrandom constants $C>0$. The conclusion that
$U_{t\leftarrow
0}^{\mathbf{X} ( \cdot ) } ( y_{0} ) \in\break
\bigcap_{p>1}\mathbb{D}^{k,p} (
%TCIMACRO{\U{211d} }%
%BeginExpansion
\mathbb{R}
%EndExpansion
^{e} ) $ follows at once from Theorems \ref{CLL} and \ref{sugita}.
\end{pf}

\subsection{Note added in proof} Shortly before the article went to press,
the authors were made aware of a
mistake in the proof of Proposition \ref{sobolev}:
control of the operator norms of
$\mathcal{D}^{k}U_{t\leftarrow0}^{\mathbf{X}(  \omega)  }
( y_{0})$ does not imply
Malliavin smoothness; instead control of the stronger Hilbert--Schmidt
norms is required. In a more
restrictive setting, this stronger control has been obtained in \cite{H3}.
At the level of generality
considered in this article, this result has recently been obtained
in \cite{Ina13}, so that the statement of
Proposition \ref{sobolev} does hold and none of our results are affected.

%s8 #&#
\section{Smoothness of the density: The proof of the main theorem}
\label{section proof main theorem} \label{sec:proof-main-result}

This section is devoted to the proof of our H\"ormander-type Theorem
\ref{main theorem}. As mentioned in the \hyperref[sec1]{Introduction}, apart from rather
standard considerations concerning probabilistic proofs of H\"ormander's
theorem (see, e.g., \cite{H3}), this boils down to the following steps:

\begin{longlist}[(1)]
\item[(1)] Let $W$ be a smooth and bounded vector field in
$\mathbb{R}^{e}$. Following \cite{H3}, denote by $ (
Z_{t}^{W} )
_{t\in [ 0,T ] }$ the process
%
%e8.1 #&#
\begin{equation}
Z_{t}^{W}=J_{0\leftarrow t}^{\mathbf{X}}W \bigl(
U_{t\leftarrow
0}^{\mathbf{X}%
} ( y_{0} ) \bigr).
\label{defn Z}%
\end{equation}
Then assuming Conditions \ref{nondeterm} and \ref{cond dom} we get a
bound on
$|Z^{W}|_{\infty}$ in terms of the Malliavin matrix $C_{T}$ defined at
\eqref{eq:def-malliavin-matrix}. This will be the content of Proposition
\ref{CMatrix}.

\item[(2)] We invoke iteratively our Norris lemma (Theorem
\ref{thm:NlemmaRP}) to processes like $Z^{W}$ in order to generate enough
upper bounds on Lie brackets of our driving vector fields at the origin.
\end{longlist}

In order to perform this second step, we first have to verify the assumptions
of Theorem \ref{thm:NlemmaRP} for the process $M_{\cdot\leftarrow
0}^{\mathbf{x}} ( y_{0} ) $ defined by \eqref{eq:def-M}.
Namely, we
shall see that $M_{\cdot\leftarrow0}^{\mathbf{x}} ( y_{0}
) $
is a
process controlled by $\mathbf{X}$ in the sense of Definition
\ref{def:controlled-paths} and relation~\eqref{eq:dcp-increment-y-d-dim}.

%pr8.1 #&#
\begin{proposition}
\label{CLL mod}Suppose $ ( X_{t} ) _{t\in [ 0,T
] }$
satisfies the condition of Theorem~\ref{CLL}. In particular, $X$ has a
lift to
$\mathbf{X,}$ a geometric-p rough path for some $p>1$ which is in
$C^{0,\gamma}([0,T];G^{\lfloor p\rfloor}(\mathbb{R}^{d}))$ for
$\gamma=1/p$.
Then $M_{\cdot\leftarrow0}^{\mathbf{x}} ( y_{0} ) $ is a process
controlled by $\mathbf{X}$ in the sense of Definition
\ref{def:controlled-paths} and
\[
\bigl\llVert M_{\cdot\leftarrow0}^{\mathbf{X}} ( y_{0} ) \bigr\rrVert
_{\mathcal{Q}_{\mathbf{X}}^{\gamma}}\in\bigcap_{p\geq1}L^{p} (
\Omega ).
\]
\end{proposition}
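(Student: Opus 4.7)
My strategy is to exploit the fact that $M \equiv M^{\mathbf{X}}_{\cdot\leftarrow 0}(y_0)$ itself solves a rough differential equation driven by $\mathbf{X}$, so that it inherits a controlled-path structure directly from the Davie--Taylor expansion of RDE solutions. Concretely, $M$ takes values in $E := \mathbb{R}^e \oplus \mathbb{R}^{e\times e} \oplus \mathbb{R}^{e\times e}$ and solves an RDE of the form
\[
dM_t = \widetilde W(M_t)\,d\mathbf{X}_t + \widetilde W_0(M_t)\,dt, \qquad M_0 = (y_0, I_e, I_e),
\]
with smooth vector fields $\widetilde W = (\widetilde W_1, \ldots, \widetilde W_d)$ and $\widetilde W_0$ on $E$ built from $V, V_0$ and their first derivatives. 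These vector fields have linear growth in the Jacobian coordinates, but the well-known triangular dependence structure prevents explosion (see \cite{FV} and the discussion in \cite{CLL}).

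With this in mind, for each word $w = (i_1, \ldots, i_n)$ of length $n \leq N-1$ (where $N = \lfloor 1/\gamma \rfloor$), I would define the derivative process $M^w$ by
\[
M^w_s := \bigl(\widetilde W_{i_1} \cdots \widetilde W_{i_n} \operatorname{id}_E\bigr)(M_s),
\]
the iterated application of the first-order differential operators $\widetilde W_{i_j}$ to the identity map on $E$, evaluated at $M_s$. A standard Davie-type Taylor expansion for RDE flows (see e.g.\ \cite{FV}, \cite{Gu}) then yields the required decomposition
\[
M^w_{s,t} = \sum_{\bar w \in \mathcal{W}_{N-1-|w|}} M^{w \bar w}_s \, \mathbf{X}^{\bar w}_{s,t} + r^w_{s,t}, \qquad |r^w_{s,t}| \leq c_M |t-s|^{(N-|w|)\gamma},
\]
and the analogous formula for $w = \emptyset$ additionally incorporating the drift as in \eqref{eq:dcp-increment-y-d-dim}. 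The constant $c_M$ is obtained as a polynomial expression in $\|M\|_\gamma$, the Lipschitz norms of $V$ and $V_0$ up to sufficient order, and $\mathcal{N}_{\mathbf{X}, \gamma}$.

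For the integrability statement, I would appeal to the strengthening of Theorem~\ref{CLL} stated in the remark that follows it: under the H\"older-controlled $\rho$-variation assumption embedded in Condition~\ref{standing assumption}, one has $\|M\|_{1/p} \in \bigcap_{q\geq 1} L^q(\mu)$, and in particular $\|M\|_\gamma \in \bigcap_{q\geq 1} L^q(\mu)$ since $\gamma = 1/p$. The Gaussian tail bound \eqref{gauss} gives that $\|\mathbf{X}\|_\gamma$, and hence $\mathcal{N}_{\mathbf{X}, \gamma}$, have Gaussian tails. Combining these two estimates via H\"older's inequality yields the claimed $\|M\|_{\mathcal{Q}_{\mathbf{X}}^{\gamma}} \in \bigcap_{q\geq 1} L^q(\mu)$.

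The most delicate step will be the remainder bound in the Davie--Taylor expansion: one needs a version of the expansion that remains valid even though $\widetilde W$ has only linear (rather than bounded) growth on $E$. This can be handled by a standard localisation argument using stopping times tied to the growth of $\|M\|_{p\text{-var}}$, together with the a priori $p$-variation estimates of \cite{CLL}; equivalently, the polynomial expression for $c_M$ above already reflects that dependence. Modulo this technicality, everything else reduces to routine bookkeeping once the identification of each $M^w$ with an iterated derivation of the identity along $\widetilde W$ is made.
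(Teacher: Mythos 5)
Your proposal is correct and follows essentially the same route as the paper: you identify $M_{\cdot\leftarrow 0}^{\mathbf{X}}(y_0)$ as the solution of an RDE driven by $\mathbf{X}$ with vector fields built from $V$, $V_0$ and their first derivatives, obtain the controlled-path decomposition from the Euler/Davie--Taylor expansion with derivative processes given by iterated vector fields evaluated at $M_s$ (the paper's $F_{i_2}F_{i_1}^{j}(M_s)$), and then deduce the moment bounds from Theorem \ref{CLL} together with the H\"older-norm remark following it and the Gaussian tail of $\mathcal{N}_{\mathbf{X},\gamma}$. The only cosmetic differences are that the paper works with $N\le 3$ explicitly and records the resulting bound as $\Vert M\Vert_{\mathcal{Q}_{\mathbf{X}}^{\gamma}}\leq c_{V}(1+\Vert J\Vert_{\infty}^{2}+\Vert J^{-1}\Vert_{\infty}^{2}+\Vert J\Vert_{\gamma}+\Vert U\Vert_{\gamma})$, while you phrase the same estimate as a polynomial in $\|M\|_{\gamma}$ and $\mathcal{N}_{\mathbf{X},\gamma}$.
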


\begin{pf}
For notational sake, the process $M_{\cdot\leftarrow0}^{\mathbf
{X}} (
y_{0} ) $ will be denoted by $M$ only. It is readily checked that
$M$ is
solution to a rough differential equation driven by~$\mathbf{X}$, associated
to the vector fields given by
%
%e8.2 #&#
\begin{equation}
\label{eq:coefficients-M}F_{i} ( y,J,K ) = \bigl( V_{i} ( y ),
\nabla V_{i} ( y ) \cdot J, -K \cdot\nabla V_{i} ( y )
\bigr), \qquad i=0,\ldots,d.
\end{equation}
This equation can be solved either by genuine rough paths methods or within
the landmark of algebraic integration. As mentioned in Proposition
\ref{prop:integral-ctrld-path}, both notions of solution coincide
thanks to
approximation procedures. This finishes the proof of our claim $M\in
\mathcal{Q}_{\mathbf{X}}^{\gamma}$.

In order to prove integrability of $M$ as an element of $\mathcal{Q}%
_{\mathbf{X}}^{\gamma}$, let us write the equation governing the
dynamics of
$M$ under the form
\[
dM_{t}= F_{0}(M_{t}) \,dt + \sum
_{i=1}^{d}F_{i}(M_{t}) \,d
\mathbf{X}_{t}^{i},
\]
where $\mathbf{X}$ is our Gaussian rough path of order at most $N=3$. The
expansion of $M$ as a controlled process is simply given by the Euler scheme
introduced in \cite{FV}, Proposition 10.3. More specifically, $M$
admits a
decomposition \eqref{eq:dcp-increment-y} of the form:
\[
M_{s,t}^{j}=M_{s}^{j,0} (t-s) +
M_{s}^{j,i_{1}} \mathbf{X}_{s,t}^{1,i_{1}}
+M_{s}^{j,i_{1},i_{2}} \mathbf{X}_{s,t}^{2,i_{1},i_{2}}+R_{s,t}^{j,M},
\]
with
\begin{eqnarray*}
M_{s}^{j,0} &=& F_{0}^{j}(M_{s}),\qquad
M_{s}^{j,i_{1}}=F_{i_{1}}^{j}%
(M_{s}), \qquad M_{s}^{j,i_{1},i_{2}}=F_{i_{2}}F_{i_{1}}^{j}(M_{s}%
),\\
\bigl|R_{s,t}^{j,M}\bigr|&\leq& c_{M}|t-s|^{3\gamma}.
\end{eqnarray*}
With the particular form \eqref{eq:coefficients-M} of the coefficient
$F$ and
our assumptions on the vector fields $V$, it is thus readily checked that
\[
\Vert M\Vert_{\mathcal{Q}_{\mathbf{X}}^{\gamma}}\leq c_{V} \bigl( 1+\Vert J
\Vert_{\infty}^{2}+\Vert J^{-1}\Vert_{\infty}^{2}+
\Vert J\Vert _{\gamma
}+\Vert U\Vert_{\gamma} \bigr),
\]
and the right-hand side of the latter relation admits moments of all order
thanks to Theorem~\ref{CLL} and the remark which follows it.
\end{pf}

Define $\mathcal{L}_{\mathbf{x}} ( y_{0},\theta,T ) $ to
be the
quantity
\[
\mathcal{L}_{\mathbf{x}} ( y_{0},\theta,T ):=1+L_{\theta
}
( x ) ^{-1}+\llvert y_{0}\rrvert +\bigl\llVert
M_{\cdot
\leftarrow
0}^{\mathbf{x}} ( y_{0} ) \bigr\rrVert
_{\mathcal{Q}_{\mathbf
{x}%
}^{\gamma}}+\mathcal{N}_{\mathbf{x},\gamma}.%
\]

%co8.2 #&#
\begin{corollary}
\label{holder roughness integrability}Under the assumptions of Proposition
\ref{CLL mod}, we have
\[
\mathcal{L}_{\mathbf{x}} ( y_{0},\theta,T ) \in\bigcap
_{p\geq
1}L^{p} ( \Omega ).
\]
\end{corollary}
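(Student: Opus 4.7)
The plan is to decompose $\mathcal{L}_{\mathbf{X}}(y_0,\theta,T)$ into its five summands and to check that each lies in $\bigcap_{p\geq 1}L^p(\Omega)$; then Minkowski's inequality in $L^p$ closes the argument. Two of the summands are deterministic (the constant $1$ and the initial value $|y_0|$), so they contribute trivially. The random variable $\Vert M_{\cdot\leftarrow 0}^{\mathbf{X}}(y_0)\Vert_{\mathcal{Q}_{\mathbf{X}}^{\gamma}}$ is in $\bigcap_{p\geq 1}L^p(\Omega)$ by the integrability half of Proposition \ref{CLL mod}.

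For the inverse modulus of H\"older roughness $L_\theta(X)^{-1}$, I would invoke Corollary \ref{l theta integrability}: once Condition \ref{non-determ} is imposed (which is part of the ambient setting in which this corollary will be applied, and which is what gives the small-ball estimate of Lemma \ref{sbp}) and $\theta > \alpha/2$ is fixed, the bound
\[
P\bigl(L_\theta(X) < x\bigr) \leq C_1 \exp\bigl(-C_2 x^{-2/\alpha}\bigr),\qquad 0<x<1,
\]
implies by integration of the tail that $L_\theta(X)^{-1}$ has moments of every order. Finally, for the inhomogeneous norm $\mathcal{N}_{\mathbf{X},\gamma}$ defined in \eqref{inhomogeneous}, each of the finitely many summands $\sup_{(u,v)\in\Delta^2}|\mathbf{X}^{k}_{u,v}|/|v-u|^{k\gamma}$ is controlled by the $k$-th power of the homogeneous H\"older seminorm $\Vert\mathbf{X}\Vert_{\gamma;[0,T]}$. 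The Fernique-type estimate \eqref{gauss}, which holds under Condition \ref{standing assumption}, shows that $\Vert\mathbf{X}\Vert_{\gamma;[0,T]}^2$ has a Gaussian tail; consequently every positive power of $\Vert\mathbf{X}\Vert_{\gamma;[0,T]}$, and therefore $\mathcal{N}_{\mathbf{X},\gamma}$ itself, lies in $\bigcap_{p\geq 1}L^p(\Omega)$.

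There is no serious obstacle here: all the heavy lifting has been done in Corollary \ref{l theta integrability}, Proposition \ref{CLL mod} and the Fernique bound \eqref{gauss}. The only minor point to be careful about is to make explicit that the statement is tacitly made under the non-degeneracy Condition \ref{non-determ} (so that Corollary \ref{l theta integrability} is available) in addition to the assumptions of Proposition \ref{CLL mod}; otherwise $L_\theta(X)^{-1}$ need not even be finite.
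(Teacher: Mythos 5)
Your argument is correct and follows essentially the same route as the paper: the constant terms are trivial, $\Vert M_{\cdot\leftarrow 0}^{\mathbf{X}}(y_0)\Vert_{\mathcal{Q}_{\mathbf{X}}^{\gamma}}$ is handled by Proposition \ref{CLL mod}, $L_\theta(X)^{-1}$ by Corollary \ref{l theta integrability}, and $\mathcal{N}_{\mathbf{X},\gamma}$ by the Fernique-type bound \eqref{gauss}. Your remark that Condition \ref{non-determ} is tacitly assumed (so that Corollary \ref{l theta integrability} applies) is a fair reading of the ambient hypotheses and does not alter the argument.
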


\begin{pf}
We recall that the standing assumptions imply that $\mathbf{\|X\|}%
_{\gamma;[ 0,T ] }$ has a Gaussian tail [see (\ref{gauss})
from Section~\ref{rough paths}]. It is easily deduce from this that
\[
\mathcal{N}_{\mathbf{X},\gamma}\in\bigcap_{p\geq1}L^{p}
( \Omega ).
\]
Similarly, we see from Corollary \ref{l theta integrability} and Proposition
\ref{CLL mod} that $L_{\theta} ( x ) ^{-1}$ and $\llVert
M_{\cdot\leftarrow0}^{\mathbf{x}} ( y_{0} ) \rrVert
_{\mathcal{Q}_{\mathbf{x}}^{\gamma}}$ have moments of all orders and
the claim follows.
\end{pf}

%de8.3 #&#
\begin{definition}
We define the sets of vector fields $\mathcal{V}_{k}$ for $k\in%
%TCIMACRO{\U{2115} }%
%BeginExpansion
\mathbb{N}
%EndExpansion
$ inductively by
\[
\mathcal{V}_{1}= \{ V_{i}\dvtx i=1,\ldots,d \},
\]
and then
\[
\mathcal{V}_{n+1}= \bigl\{ [ V_{i},W ] \dvtx i=0,1,\ldots,d,W
\in\mathcal{V}_{n} \bigr\}.
\]
\end{definition}

%pr8.4 #&#
\begin{proposition}
\label{CMatrix} Let $ ( X_{t} ) _{t\in [ 0,T ]
}= (
X_{t}^{1},\ldots,X_{t}^{d} ) _{t\in [ 0,T ] }$ be a
continuous Gaussian process, with i.i.d. components associated to the abstract
Wiener space $ ( \mathcal{W},\mathcal{H},\mu ) $. Assume
that $X$
satisfies the assumptions of Theorem \ref{main theorem}. Then there
exist real
numbers $p$ and $\theta$ satisfying $2/p>$ $\theta>\alpha/2$ such
that: \textup{(i)}
$X$ is $\theta$-H\"{o}lder rough and \textup{(ii)} $X$ has a natural lift to a
geometric $p$ rough path $\mathbf{X}$ in $C^{0,1/p}([0,T];G^{\lfloor
p\rfloor
}(\mathbb{R}^{d}))$. For $t\in(0,T]$, let
\[
C_{t}=\sum_{i=1}^{d}\int
_{ [ 0,t ] ^{2}}J_{t\leftarrow
s}^{\mathbf{X}} ( y_{0} )
V_{i} ( Y_{s} ) \otimes J_{t\leftarrow s^{\prime}}^{\mathbf{X}} (
y_{0} ) V_{i} ( Y_{s^{\prime}} ) \,dR \bigl(
s,s^{\prime} \bigr),
\]
and suppose $k\in%
%TCIMACRO{\U{2115} }%
%BeginExpansion
\mathbb{N}
%EndExpansion
\cup \{ 0 \} $. Then there exist constants $\mu=\mu (
k ) >0$ and $C=C ( t,k ) >0$ such that for all $W\in
\mathcal{V}_{k}$ and all $v\in%
%TCIMACRO{\U{211d} }%
%BeginExpansion
\mathbb{R}
%EndExpansion
^{e}$ with $\llvert  v\rrvert =1$, we have
%
%e8.3 #&#
\begin{equation}
\bigl\llvert \bigl\langle v,Z_{\cdot}^{W} \bigr\rangle\bigr
\rrvert _{\infty;
[ 0,t ] }\leq C\mathcal{L}_{\mathbf{X}} ( y_{0}%
,\theta,t ) ^{\mu} \bigl( v^{T}C_{t}v \bigr)
^{\mu}. \label
{conc}%
\end{equation}
\end{proposition}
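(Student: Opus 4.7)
The plan is to prove the bound by induction on $k\ge 1$, with the base case handled by the interpolation inequality of Section~\ref{interpol} and the inductive step by the Norris-type lemma of Section~\ref{Norris lemma section}. The hypotheses of the proposition ensure that one may pick $\theta\in(\alpha/2,2/p)$ so that $X$ is $\theta$-H\"older rough (Corollary~\ref{l theta integrability}), and Proposition~\ref{CLL mod} shows that the enlarged process $M^{\mathbf{X}}_{\cdot\leftarrow 0}(y_0)$ is a controlled path based on $\mathbf{X}$ with $\mathcal{Q}^{\gamma}_{\mathbf{X}}$-norm in every $L^q$. As a consequence, for any smooth vector field $W$, the process $Z^W_t = J^{\mathbf{X}}_{0\leftarrow t}W(U^{\mathbf{X}}_{t\leftarrow 0}(y_0))$ is itself a controlled path whose norm is bounded by a polynomial in $\mathcal{L}_{\mathbf{X}}(y_0,\theta,t)$.

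For the base case $k=1$, fix $V_i\in\mathcal{V}_1$ and a unit $v$. Using $J^{\mathbf{X}}_{t\leftarrow s}=J^{\mathbf{X}}_{t\leftarrow 0}J^{\mathbf{X}}_{0\leftarrow s}$ and setting $u:=(J^{\mathbf{X}}_{t\leftarrow 0})^{T}v$, one rewrites
\[
v^{T}C_{t}v \;=\; \sum_{j=1}^{d}\int_{[0,t]^{2}} \langle u, Z^{V_j}_s\rangle\,\langle u, Z^{V_j}_{s'}\rangle\, dR(s,s'),
\]
each summand being a non-negative 2D Young integral. Applying Corollary~\ref{interpolation2} to $f_s := \langle u, Z^{V_i}_s\rangle$, and absorbing both $\|f\|_\gamma$ and $|u|$ into a power of $\mathcal{L}_{\mathbf{X}}$ (via Proposition~\ref{CLL mod} and Theorem~\ref{CLL}), yields a bound $\|\langle u, Z^{V_i}\rangle\|_{\infty;[0,t]}\leq C\,\mathcal{L}_{\mathbf{X}}^{\kappa_1}(v^{T}C_{t}v)^{\mu(1)}$. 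Inverting the linear map $v\mapsto u$ (i.e.\ using $v=J^{\mathbf{X},T}_{0\leftarrow t}u$) converts this into the desired bound on $\|\langle v, Z^{V_i}\rangle\|_{\infty;[0,t]}$; the additional operator-norm factors $\|J^{\mathbf{X}}_{t\leftarrow 0}\|_{op}$ and $\|J^{\mathbf{X}}_{0\leftarrow t}\|_{op}$ introduced by this change of test vector are in turn absorbed into $\mathcal{L}_{\mathbf{X}}^{\kappa_1}$.

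For the inductive step, let $W'=[V_j,W]\in\mathcal{V}_{k+1}$ with $W\in\mathcal{V}_k$ and $j\in\{0,1,\ldots,d\}$. A direct rough-path computation (using the variational equation for $J^{\mathbf X}_{0\leftarrow\cdot}$ together with the rough chain rule on $W(Y)$) shows that
\[
Z^W_t - Z^W_0 \;=\; \sum_{i=1}^{d}\int_0^t Z^{[V_i,W]}_s\, d\mathbf{X}^i_s \;+\; \int_0^t Z^{[V_0,W]}_s\, ds.
\]
Pairing with $v$ gives $z_t=\sum_{i=1}^d\int_0^t y^i_s\,d\mathbf{X}^i_s+\int_0^t b_s\,ds$ with $z_t=\langle v,Z^W_t-Z^W_0\rangle$, $y^i_s=\langle v,Z^{[V_i,W]}_s\rangle$ and $b_s=\langle v,Z^{[V_0,W]}_s\rangle$, each of $y^i$ and $b$ being a controlled path based on $\mathbf{X}$ with $\mathcal Q^\gamma_{\mathbf X}$-norm polynomial in $\mathcal{L}_{\mathbf{X}}$. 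Theorem~\ref{thm:NlemmaRP} then furnishes $r,q>0$ and a polynomial $\mathcal R$ in $\mathcal L_{\mathbf X}$ such that $\|y\|_\infty+\|b\|_\infty\leq M\mathcal{R}^{q}\|z\|_\infty^{r}$. Combining this with $\|z\|_\infty\leq 2\|\langle v,Z^W\rangle\|_\infty\leq 2C(t,k)\mathcal{L}_{\mathbf{X}}^{\mu(k)}(v^{T}C_{t}v)^{\mu(k)}$ from the inductive hypothesis completes the induction with $\mu(k+1)=r\,\mu(k)$ and an enlarged polynomial factor in $\mathcal{L}_{\mathbf{X}}$.

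The main obstacle is the careful bookkeeping required to ensure that $\langle v,Z^W\rangle$ fits the hypotheses of Theorem~\ref{thm:NlemmaRP} as a controlled rough path: one must expand $Z^W$ in the signature of $\mathbf{X}$, verify that all derivative processes and remainders are bounded by polynomials in $\mathcal{L}_{\mathbf{X}}(y_0,\theta,t)$, and apply Theorem~\ref{CLL} (and its H\"older-controlled refinement) to provide the necessary moment bounds on $J^{\pm 1}_{t\leftarrow 0}$. A subsidiary but delicate point is the passage between $u=(J^{\mathbf X}_{t\leftarrow 0})^{T}v$ and $v$ in the base case, which introduces extra Jacobian factors that must be absorbed into $\mathcal{L}_{\mathbf{X}}^{\mu}$ using these same moment estimates. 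Once these ingredients are assembled, the exponents $\mu(k)$ and prefactors $C(t,k)$ remain finite at every stage, and the induction closes in finitely many steps.
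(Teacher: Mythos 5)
Your proposal follows essentially the same route as the paper's proof: the base case is handled by writing $v^{T}C_{t}v=\sum_{i}\Lambda_{t}^{i}$ as a sum of non-negative 2D Young integrals and applying Corollary \ref{interpolation2} together with the 2D Young estimate, with all H\"older norms and Jacobian factors absorbed into a power of $\mathcal{L}_{\mathbf{X}}\left(y_{0},\theta,t\right)$ via Proposition \ref{CLL mod} and Theorem \ref{CLL}; the inductive step uses the controlled-path identity $dZ^{W}=\sum_{i}Z^{[V_{i},W]}dX^{i}+Z^{[V_{0},W]}dt$ (interpreted through Proposition \ref{prop:integral-ctrld-path}) and the Norris-type Theorem \ref{thm:NlemmaRP}, exactly as in the paper; your explicit inclusion of the drift bracket $Z^{[V_{0},W]}$ is in fact slightly more careful than the displayed identity in the paper's proof.

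The one step that does not work as written is the conversion between the test vectors $u=(J_{t\leftarrow0}^{\mathbf{X}})^{T}v$ and $v$ in the base case. The interpolation argument gives you a bound on $\sup_{s}|\langle u,Z_{s}^{V_{i}}\rangle|$ in terms of $(v^{T}C_{t}v)^{\mu}$, which is correct; but $\langle v,Z_{s}^{V_{i}}\rangle=\langle u,J_{0\leftarrow t}^{\mathbf{X}}Z_{s}^{V_{i}}\rangle$, and smallness of the projection of $Z_{s}^{V_{i}}$ onto the single direction $u$ gives no control over its projection onto a different direction: multiplying by $\Vert J_{t\leftarrow0}\Vert_{op}\Vert J_{0\leftarrow t}\Vert_{op}$ cannot repair this, since the quadratic forms $C_{t}$ and $J_{0\leftarrow t}C_{t}J_{0\leftarrow t}^{T}$ are comparable only after taking an infimum over unit vectors, not pointwise in $v$. (To be fair, the paper's own proof elides the same issue by simply writing $f^{i}(s)=\langle v,J_{t\leftarrow s}^{\mathbf{X}}V_{i}(Y_{s})\rangle=\langle v,Z_{s}^{V_{i}}\rangle$.) The clean repair is not to convert at all: run the entire induction with the single fixed vector $u$ — the identity $\langle u,Z_{\cdot}^{W}\rangle=\langle u,W(y_{0})\rangle+\sum_{i}\int\langle u,Z^{[V_{i},W]}\rangle dX^{i}+\int\langle u,Z^{[V_{0},W]}\rangle ds$ and Theorem \ref{thm:NlemmaRP} apply verbatim for any fixed vector — which yields $\sup_{W\in\mathcal{V}_{k}}\Vert\langle u,Z^{W}\rangle\Vert_{\infty;[0,t]}\leq C\mathcal{L}_{\mathbf{X}}^{\mu}(v^{T}C_{t}v)^{\mu}$. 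In the proof of Theorem \ref{main theorem} one then uses $|\langle u,W(y_{0})\rangle|\geq a\,|u|\geq a\,\Vert J_{0\leftarrow t}\Vert_{op}^{-1}$ and absorbs this single extra Jacobian factor into $\mathcal{L}_{\mathbf{X}}$, which is all that is needed for the moment estimates on the inverse of $\inf_{|v|=1}v^{T}C_{t}v$. With that adjustment your argument coincides with the paper's.
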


\begin{pf}
Let us prove the first assertion. To do this, we note that the
constraint on
$\rho$ implies that $X$ lifts to a geometric $p$-rough path for any
$p>2\rho$.
Because the $\rho$-variation is assumed to be H\"{o}lder-controlled, it
follows from \cite{FV} that $\mathbf{X}$ is in
$C^{0,1/p}([0,T];G^{\lfloor
p\rfloor}(\mathbb{R}^{d}))$. By assumption $\alpha<2/\rho$, therefore
we may
always choose $p$ close enough to $2\rho$ in order that
\[
\frac{2}{p}>\frac{\alpha}{2}.
\]
On the other hand, $X$ is $\theta$-H\"{o}lder rough for any $\theta
>\alpha/2$
by Corollary \ref{l theta integrability}. Hence, there always exist
$p$ and
$\theta$ with the stated properties.

We have that
%
%e8.4 #&#
\begin{equation}
v^{T}C_{t}v=\sum_{i=1}^{d}
\Lambda_{t}^{i} \qquad\mbox{with } \Lambda_{t}^{i}
\equiv\int_{ [ 0,t ] ^{2}}f^{i} ( s ) f^{i} \bigl(
s^{\prime} \bigr) \,dR \bigl( s,s^{\prime} \bigr),
\label{covariance matrix}%
\end{equation}
where we have set $f^{i} ( s ):= \langle
v,J_{t\leftarrow
s}^{\mathbf{X}} ( y_{0} ) V_{i} ( y_{s} )
\rangle
= \langle v,Z_{s}^{V_{i}} \rangle$.
%From (\ref{covariance matrix}) it follows that
%v^{T}C_{t}v\geq\max_{i=1,\ldots,d}\int_{\left[ 0,t\right] ^{2}}f^{i}
%s\right) f^{i}\left( s^{\prime}\right) dR\left( s,s^{\prime}\right).
Furthermore, because the hypotheses of Theorem \ref{interpolation} and
Corollary \ref{interpolation2} are satisfied, we can deduce that%
%
%e8.5 #&#
\begin{equation}
\bigl\llvert f^{i}\bigr\rrvert _{\infty; [ 0,t ] }\leq2\max \biggl[
\frac{\llvert  \Lambda_{t}^{i} \rrvert  ^{1/2}}{E [
X_{t}^{2} ]
^{1/2}},\frac{1}{\sqrt{c}}\bigl\llvert \Lambda_{t}^{i}
\bigr\rrvert ^{\gamma
/ (
2\gamma+\alpha ) }\bigl\llvert f^{i}\bigr\rrvert
_{\gamma; [
0,t ] }^{\alpha/ ( 2\gamma+\alpha ) } \biggr] \label{in}%
\end{equation}
for $i=1,\ldots,d$. On the other hand, Young's inequality for 2D
integrals (see
\cite{FV07}) gives
%
%e8.6 #&#
\begin{equation}
\bigl\llvert \Lambda_{t}^{i} \bigr\rrvert \lesssim \bigl[
\bigl\llvert f^{i}\bigr\rrvert _{\gamma; [ 0,t ] }+\bigl\llvert
f^{i} ( 0 ) \bigr\rrvert \bigr] ^{2}V_{\rho} \bigl(
R; [ 0,t ] ^{2} \bigr). \label{2D Young}%
\end{equation}
From (\ref{2D Young}), (\ref{in}) and the relation $v^{T}C_{t}v=\sum_{i=1}^{d}
\Lambda_{t}^{i}$, it follows that there exists some $C_{1}>0$,
depending on
$t$ and $c$, such that we have
\[
\bigl\llvert f^{i}\bigr\rrvert _{\infty; [ 0,t ] }\leq C_{1}
\bigl( v^{T}C_{t}v \bigr) ^{\gamma/ ( 2\gamma+\alpha ) }\max
_{i=1,\ldots,d} \bigl[ \bigl\llvert f^{i} ( 0 ) \bigr\rrvert
+\bigl\llvert f^{i}\bigr\rrvert _{\gamma; [ 0,t ] } \bigr]
^{\alpha/ ( 2\gamma+\alpha ) }.
\]
Using the fact that for some $\nu>0$,
\[
\bigl\llvert f^{i} ( 0 ) \bigr\rrvert +\bigl\llvert f^{i}
\bigr\rrvert _{\gamma; [ 0,t ] }\leq C_{2}\mathcal{L}_{\mathbf
{X}} (
y_{0},\theta,t ) ^{\nu}\qquad\mbox{for }i=1,\ldots,d,
\]
it is easy to deduce that (\ref{conc}) holds whenever $W\in\mathcal{V}_{1}$.

The proof of (\ref{conc}) for arbitrary $k\in%
%TCIMACRO{\U{2115} }%
%BeginExpansion
\mathbb{N}
%EndExpansion
$ now follows by induction. The key relation comes from observing that
\[
\bigl\langle v,Z_{u}^{W} \bigr\rangle= \bigl\langle v,W
( y_{0} ) \bigr\rangle+\sum_{i=1}^{d}
\int_{0}^{u} \bigl\langle v,Z_{r}^{
[ V_{i},W ] }
\bigr\rangle \,dX_{r}^{i},
\]
in the sense of Proposition \ref{prop:integral-ctrld-path}. Hence, assuming
the induction hypothesis, we can use Theorem \ref{thm:NlemmaRP} to
obtain a
bound of the form (\ref{conc}) on
\[
\bigl\llvert \bigl\langle v,Z_{\cdot}^{ [ V_{i},W ] } \bigr\rangle \bigr
\rrvert _{\infty; [ 0,t ] }%
\]
for all $W\in\mathcal{V}_{k}$. Since $\mathcal{V}_{k+1}= \{
[
V_{i},W ] \dvtx i=0,1,\ldots,d,W\in\mathcal{V}_{n} \} $, the
result is
then established.\vadjust{\goodbreak}
\end{pf}

We are now in a position to prove our main theorem. Since the structure
of the
argument is the classical one, we will minimise the amount of detail
where possible.

\begin{pf*}{Proof of Theorem \ref{main theorem}}
This involves assembling together the
pieces we have developed in the paper. First, let $2/p>$ $\theta
>\alpha
/2$ be
chosen such that $X$ is $\theta$-H\"{o}lder rough and $X$ has a
natural lift
to a geometric $p$ rough path $\mathbf{X}$ in
$C^{0,1/p}([0,1];G^{\lfloor
p\rfloor}(\mathbb{R}^{d}))$. This is always possible by the first
part of
Proposition~\ref{CMatrix}. Let $0<t\leq T$ and note that we have shown in
Proposition \ref{sobolev} that $U_{t\leftarrow0}^{\mathbf{X} (
\omega ) } ( y_{0} ) $ is in $\mathbb{D}^{\infty
} (
%TCIMACRO{\U{211d} }%
%BeginExpansion
\mathbb{R}
%EndExpansion
^{e} ) $. The result will therefore follow by showing that for every
$q>0$, there exists $c_{1}=c_{1} ( q ) $ such that
\[
P \Bigl( \inf_{\llvert  v\rrvert =1} \langle v,C_{t}v \rangle <
\varepsilon \Bigr) \leq c_{1}\varepsilon^{q}
\]
for all $\varepsilon\in ( 0,1 ) $. It is classical that proving
$ ( \det C_{t} ) ^{-1}$ has finite moments of all order is
sufficient for $U_{t\leftarrow0}^{\mathbf{X} ( \omega )
} (
y_{0} ) $ to have a smooth density (see, e.g., \cite{nualart}).

\textit{Step} 1: From H\"{o}rmander's condition, there
exists $N\in%
%TCIMACRO{\U{2115} }%
%BeginExpansion
\mathbb{N}
%EndExpansion
$ with the property that
\[
\operatorname{span} \Biggl\{ W ( y_{0} ) \dvtx W\in\bigcup_{i=1}^{N}
\mathcal{V}%
_{i} \Biggr\} =%
%TCIMACRO{\U{211d} }%
%BeginExpansion
\mathbb{R} %EndExpansion
^{e}.
\]
Consequently, we can deduce that
%
%e8.7 #&#
\begin{equation}
a:=\inf_{\llvert  v\rrvert =1}\sum_{W\in\bigcup
_{i=1}^{N}\mathcal
{V}_{i}%
}\bigl
\llvert \bigl\langle v,W ( y_{0} ) \bigr\rangle \bigr\rrvert >0.
\label{lb}%
\end{equation}
For every $W\in\bigcup_{i=1}^{N}\mathcal{V}_{i}$, we have
%
%e8.8 #&#
\begin{equation}
\bigl\llvert \bigl\langle v,Z_{\cdot}^{W} \bigr\rangle\bigr
\rrvert _{\infty;
[ 0,t ] }\geq\bigl\llvert \bigl\langle v,W ( y_{0} )
\bigr\rangle\bigr\rrvert, \label{lb2}%
\end{equation}
and hence using (\ref{lb}), (\ref{lb2}) and Proposition \ref{CMatrix}
we end
up with
%
%e8.9 #&#
\begin{equation}
a\leq\inf_{\llvert  v\rrvert =1}\sup_{W\in\bigcup
_{i=1}^{N}\mathcal
{V}_{i}%
}\bigl\llvert
\bigl\langle v,Z_{\cdot}^{W} \bigr\rangle\bigr\rrvert
_{\infty; [ 0,t ] }\leq c_{1}\mathcal{L}_{\mathbf
{X}} (
y_{0},\theta,t ) ^{\mu}\inf_{\llvert  v\rrvert
=1}\bigl
\llvert v^{T}C_{t}v\bigr\rrvert ^{\pi}
\label{key bound}%
\end{equation}
for some positive constants $c_{1}$, $\mu=\mu_{N}$ and $\pi=\pi_{N}$.

\textit{Step} 2: From (\ref{key bound}) can deduce
that for
any $\varepsilon\in ( 0,1 ) $
\[
P \Bigl( \inf_{\llvert  v\rrvert =1}\bigl\llvert v^{T}C_{t}v
\bigr\rrvert <\varepsilon \Bigr) \leq P \bigl( \mathcal{L}_{\mathbf{X}} (
y_{0} %%
,\theta,t ) ^{\mu}\mathcal{>}c_{2}
\varepsilon^{-k} \bigr)
\]
for some constants $c_{2}>0$ and $k>0$ which do not depend on $\varepsilon
$. It
follows from Corollary \ref{holder roughness integrability} that for every
$q>0$ we have
\[
P \Bigl( \inf_{\llvert  v\rrvert =1}\bigl\llvert v^{T}C_{t}v
\bigr\rrvert <\varepsilon \Bigr) \leq c_{3}\varepsilon^{kq},
\]
where $c_{3}=c_{3} ( q ) >0$ does not depend on $\varepsilon$.
\end{pf*}

%s9 #&#
\begin{appendix}\label{app}
\section*{Appendix}

\begin{pf*}{Proof of Lemma \ref{technical}}
We prove the result for $S=T$, the
modifications for $S<T$ are straightforward. Consider three nested sequences
$(A_{m})_{m=1}^{\infty}$, $ ( B_{m} ) _{m=1}^{\infty}$ and
$ (
C_{m} ) _{m=1}^{\infty}$ consisting of partitions of $ [
0,s ] $, $ [ s,t ] $ and $ [ t,T ]$, respectively,
and suppose that the mesh of each sequence tends to zero as $m$ tends to
infinity. For each $m_{1} $ and $m_{2}$ in $%
%TCIMACRO{\U{2115} }%
%BeginExpansion
\mathbb{N}
%EndExpansion
$ let $D^{m_{1},m_{2}}$ denote the partition of $ [ 0,T ] $ defined
by
\[
D^{m_{1},m_{2}}=A_{m_{1}}\cup B_{m_{2}}\cup C_{m_{1}}.
\]
We now construct an increasing function $r\dvtx %
%TCIMACRO{\U{2115} }%
%BeginExpansion
\mathbb{N}
%EndExpansion
\rightarrow%
%TCIMACRO{\U{2115} }%
%BeginExpansion
\mathbb{N}
%EndExpansion
$ such that
\[
( D_{m} ) _{m=1}^{\infty}= \bigl( D^{r ( m )
,m}
\bigr) _{m=1}^{\infty}%
\]
together form a nested sequence of partitions of $ [ 0,T ] $ having
the needed properties.

We do this inductively. First, let $m=1$, then for every two
consecutive points
$u<v$ in the partition $B_{m}$ Lemma \ref{continuity} implies that
\[
\operatorname{Cov} \bigl( Z_{s,t}Z_{u,v}|
\mathcal{F}^{A_{n}}\vee \mathcal{F}^{C_{n}} \bigr) \rightarrow
\operatorname{Cov} ( Z_{s,t}Z_{u,v}|\mathcal{F}_{s,t}
\vee\mathcal{F}_{t,T} )
\]
as $n\rightarrow\infty$. $Z$ has positive conditional covariance, therefore
the right-hand side of the last expression is positive. This means we can
choose $r ( 1 ) $ to ensure that
%
%e9.1 #&#
\setcounter{equation}{0}
\begin{equation}
\operatorname{Cov} \bigl( Z_{s,t}Z_{u,v}|
\mathcal{F}^{A_{r (
1 )
}}\vee\mathcal{F}^{C_{r ( 1 ) }} \bigr) \geq0
\label
{cov part} %%
\end{equation}
for every two consecutive points $u$ and $v$ in $B_{m}$ [the total
number of
such pairs does not depend on $r(1)$]. We then let $D_{1}=D^{r (
1 ),1}$, both properties 2 and 3 in the statement are easy to check;
the latter follows from (\ref{cov part}), when we interpret~the Schur
complement as the covariance matrix of $Z_{2}^{1}$ conditional on $Z_{1}^{1}$
(see also the proof of Proposition \ref{comparison}). Having specified
$r ( 1 ) <\cdots<r ( k-1 )$, we need only
repeat the
treatment outlined above by choosing some natural number $r (
k )
>r ( k-1 ) $ to ensure that
\[
\operatorname{Cov} \bigl( Z_{s,t}Z_{u,v}|
\mathcal{F}^{A_{r (
k )
}}\vee\mathcal{F}^{C_{r ( k ) }} \bigr) \geq0
\]
for each pair of consecutive points $u<v$ in $B_{k}$. It is easy to verify
that $ ( D_{m} ) _{m=1}^{\infty}$ constructed in this way
has the
properties we need.
\end{pf*}
\end{appendix}

% imsref loaded by akundreckaite, 2014-02-13 13:02:13

%

% zodis "Acknowledgments" paliekamas pagal autoriu

%suskaldyti doi

\printaddresses


\begin{thebibliography}{37}
% Style name=ims, version=2.7, label_style=nolabel, sorting_style=complex, cfg=None, language=None.
%b1 ###
%b1 #&#
\bibitem{BH}
\begin{barticle}[mr]
\bauthor{\bsnm{Baudoin},~\bfnm{Fabrice}\binits{F.}} \AND
\bauthor{\bsnm{Hairer},~\bfnm{Martin}\binits{M.}}
(\byear{2007}).
\btitle{A version of {H}\"ormander's theorem for the fractional {B}rownian motion}.
\bjournal{Probab. Theory Related Fields}
\bvolume{139}
\bpages{373--395}.
\bid{doi={10.1007/s00440-006-0035-0}, issn={0178-8051}, mr={2322701}}
\end{barticle}\vspace*{-9pt}
\bptok{imsref}%
% NOT OUTPUTED:
%   issn = 0178-8051
%   url = http://dx.doi.org/10.1007/s00440-006-0035-0
%   number = 3-4
%   coden = PTRFEU
%   fjournal = Probability Theory and Related Fields
\endbibitem

%b2 ###
%b2 #&#
\bibitem{BauO}
\begin{barticle}[mr]
\bauthor{\bsnm{Baudoin},~\bfnm{Fabrice}\binits{F.}} \AND
\bauthor{\bsnm{Ouyang},~\bfnm{Cheng}\binits{C.}}
(\byear{2011}).
\btitle{Small-time kernel expansion for solutions of stochastic differential equations driven by fractional {B}rownian motions}.
\bjournal{Stochastic Process. Appl.}
\bvolume{121}
\bpages{759--792}.
\bid{doi={10.1016/j.spa.2010.11.011}, issn={0304-4149}, mr={2770906}}
\end{barticle}
\bptok{imsref}%
% NOT OUTPUTED:
%   issn = 0304-4149
%   url = http://dx.doi.org/10.1016/j.spa.2010.11.011
%   number = 4
%   coden = STOPB7
%   fjournal = Stochastic Processes and their Applications
\endbibitem

%b3 ###
%b3 #&#
\bibitem{BOS}
\begin{barticle}[mr]
\bauthor{\bsnm{Baudoin},~\bfnm{Fabrice}\binits{F.}},
\bauthor{\bsnm{Ouyang},~\bfnm{Cheng}\binits{C.}} \AND
\bauthor{\bsnm{Tindel},~\bfnm{Samy}\binits{S.}}
(\byear{2014}).
\btitle{Upper bounds for the density of solutions to stochastic differential equations driven by fractional {B}rownian motions}.
\bjournal{Ann. Inst. Henri Poincar\'e Probab. Stat.}
\bvolume{50}
\bpages{111--135}.
\bid{doi={10.1214/12-AIHP522}, issn={0246-0203}, mr={3161525}}
\bptnote{check year}%
\end{barticle}
\bptok{imsref}%
% NOT OUTPUTED:
%   issn = 0246-0203
%   url = http://dx.doi.org/10.1214/12-AIHP522
%   number = 1
%   fjournal = Annales de l'Institut Henri Poincar\'e Probabilit\'es et Statistiques
\endbibitem

%b4 ###
%b4 #&#
\bibitem{berman}
\begin{barticle}[mr]
\bauthor{\bsnm{Berman},~\bfnm{Simeon~M.}\binits{S.~M.}}
(\byear{1973/74}).
\btitle{Local nondeterminism and local times of {G}aussian processes}.
\bjournal{Indiana Univ. Math. J.}
\bvolume{23}
\bpages{69--94}.
\bid{issn={0022-2518}, mr={0317397}}
\end{barticle}
\bptok{imsref}%
% NOT OUTPUTED:
%   issn = 0022-2518
%   fjournal = Indiana University Mathematics Journal
\endbibitem

%b5 ###
%b5 #&#
\bibitem{boyd}
\begin{bbook}[mr]
\bauthor{\bsnm{Boyd},~\bfnm{Stephen}\binits{S.}} \AND
\bauthor{\bsnm{Vandenberghe},~\bfnm{Lieven}\binits{L.}}
(\byear{2004}).
\btitle{Convex Optimization}.
\bpublisher{Cambridge Univ. Press},
\blocation{Cambridge}.
\bid{mr={2061575}}
\end{bbook}
\bptok{imsref}%
% NOT OUTPUTED:
%   isbn = 0-521-83378-7
%   fpage = xiv+716
\endbibitem

%b6 ###
%b6 #&#
\bibitem{CF}
\begin{barticle}[mr]
\bauthor{\bsnm{Cass},~\bfnm{Thomas}\binits{T.}} \AND
\bauthor{\bsnm{Friz},~\bfnm{Peter}\binits{P.}}
(\byear{2010}).
\btitle{Densities for rough differential equations under {H}\"ormander's condition}.
\bjournal{Ann. of Math. (2)}
\bvolume{171}
\bpages{2115--2141}.
\bid{doi={10.4007/annals.2010.171.2115}, issn={0003-486X}, mr={2680405}}
\end{barticle}
\bptok{imsref}%
% NOT OUTPUTED:
%   issn = 0003-486X
%   url = http://dx.doi.org/10.4007/annals.2010.171.2115
%   number = 3
%   coden = ANMAAH
%   fjournal = Annals of Mathematics. Second Series
\endbibitem

%b7 ###
%b7 #&#
\bibitem{CFV}
\begin{barticle}[mr]
\bauthor{\bsnm{Cass},~\bfnm{Thomas}\binits{T.}},
\bauthor{\bsnm{Friz},~\bfnm{Peter}\binits{P.}} \AND
\bauthor{\bsnm{Victoir},~\bfnm{Nicolas}\binits{N.}}
(\byear{2009}).
\btitle{Non-degeneracy of {W}iener functionals arising from rough differential equations}.
\bjournal{Trans. Amer. Math. Soc.}
\bvolume{361}
\bpages{3359--3371}.
\bid{doi={10.1090/S0002-9947-09-04677-7}, issn={0002-9947}, mr={2485431}}
\end{barticle}
\bptok{imsref}%
% NOT OUTPUTED:
%   issn = 0002-9947
%   url = http://dx.doi.org/10.1090/S0002-9947-09-04677-7
%   number = 6
%   coden = TAMTAM
%   fjournal = Transactions of the American Mathematical Society
\endbibitem

%b8 ###
%b8 #&#
\bibitem{CLL}
\begin{barticle}[mr]
\bauthor{\bsnm{Cass},~\bfnm{Thomas}\binits{T.}},
\bauthor{\bsnm{Litterer},~\bfnm{Christian}\binits{C.}} \AND
\bauthor{\bsnm{Lyons},~\bfnm{Terry}\binits{T.}}
(\byear{2013}).
\btitle{Integrability and tail estimates for {G}aussian rough differential equations}.
\bjournal{Ann. Probab.}
\bvolume{41}
\bpages{3026--3050}.
\bid{issn={0091-1798}, mr={3112937}}
\end{barticle}
\bptok{imsref}%
% NOT OUTPUTED:
%   issn = 0091-1798
%   number = 4
%   coden = APBYAE
%   fjournal = The Annals of Probability
\endbibitem

%b9 ###
%b9 #&#
\bibitem{CQ}
\begin{barticle}[mr]
\bauthor{\bsnm{Coutin},~\bfnm{Laure}\binits{L.}} \AND
\bauthor{\bsnm{Qian},~\bfnm{Zhongmin}\binits{Z.}}
(\byear{2002}).
\btitle{Stochastic analysis, rough path analysis and fractional {B}rownian motions}.
\bjournal{Probab. Theory Related Fields}
\bvolume{122}
\bpages{108--140}.
\bid{doi={10.1007/s004400100158}, issn={0178-8051}, mr={1883719}}
\end{barticle}
\bptok{imsref}%
% NOT OUTPUTED:
%   issn = 0178-8051
%   url = http://dx.doi.org/10.1007/s004400100158
%   number = 1
%   coden = PTRFEU
%   fjournal = Probability Theory and Related Fields
\endbibitem

%b10 ###
%b10 #&#
\bibitem{cuzick2}
\begin{barticle}[mr]
\bauthor{\bsnm{Cuzick},~\bfnm{Jack}\binits{J.}}
(\byear{1982}).
\btitle{Continuity of {G}aussian local times}.
\bjournal{Ann. Probab.}
\bvolume{10}
\bpages{818--823}.
\bid{issn={0091-1798}, mr={0659551}}
\end{barticle}
\bptok{imsref}%
% NOT OUTPUTED:
%   issn = 0091-1798
%   url = http://links.jstor.org/sici?sici=0091-1798(198208)10:3<818:COGLT>2.0.CO;2-S&origin=MSN
%   number = 3
%   coden = APBYAE
%   fjournal = The Annals of Probability
\endbibitem

%b11 ###
%b11 #&#
\bibitem{cuz}
\begin{barticle}[mr]
\bauthor{\bsnm{Cuzick},~\bfnm{Jack}\binits{J.}} \AND
\bauthor{\bsnm{DuPreez},~\bfnm{Johannes~P.}\binits{J.~P.}}
(\byear{1982}).
\btitle{Joint continuity of {G}aussian local times}.
\bjournal{Ann. Probab.}
\bvolume{10}
\bpages{810--817}.
\bid{issn={0091-1798}, mr={0659550}}
\end{barticle}
\bptok{imsref}%
% NOT OUTPUTED:
%   issn = 0091-1798
%   url = http://links.jstor.org/sici?sici=0091-1798(198208)10:3<810:JCOGLT>2.0.CO;2-O&origin=MSN
%   number = 3
%   coden = APBYAE
%   fjournal = The Annals of Probability
\endbibitem

%b12 ###
%b12 #&#
\bibitem{FO10}
\begin{barticle}[mr]
\bauthor{\bsnm{Friz},~\bfnm{Peter}\binits{P.}} \AND
\bauthor{\bsnm{Oberhauser},~\bfnm{Harald}\binits{H.}}
(\byear{2010}).
\btitle{A generalized {F}ernique theorem and applications}.
\bjournal{Proc. Amer. Math. Soc.}
\bvolume{138}
\bpages{3679--3688}.
\bid{doi={10.1090/S0002-9939-2010-10528-2}, issn={0002-9939}, mr={2661566}}
\end{barticle}
\bptok{imsref}%
% NOT OUTPUTED:
%   issn = 0002-9939
%   url = http://dx.doi.org/10.1090/S0002-9939-2010-10528-2
%   number = 10
%   coden = PAMYAR
%   fjournal = Proceedings of the American Mathematical Society
\endbibitem

%b13 ###
%b13 #&#
\bibitem{FS}
\begin{barticle}[mr]
\bauthor{\bsnm{Friz},~\bfnm{Peter}\binits{P.}} \AND
\bauthor{\bsnm{Shekhar},~\bfnm{Atul}\binits{A.}}
(\byear{2013}).
\btitle{Doob--{M}eyer for rough paths}.
\bjournal{Bull. Inst. Math. Acad. Sin. (N.S.)}
\bvolume{8}
\bpages{73--84}.
\bid{issn={2304-7909}, mr={3097417}}
\end{barticle}
\bptok{imsref}%
% NOT OUTPUTED:
%   issn = 2304-7909
%   number = 1
%   fjournal = Bulletin of the Institute of Mathematics. Academia Sinica. New Series
\endbibitem

%b14 ###
%b14 #&#
\bibitem{FV3}
\begin{barticle}[mr]
\bauthor{\bsnm{Friz},~\bfnm{Peter}\binits{P.}} \AND
\bauthor{\bsnm{Victoir},~\bfnm{Nicolas}\binits{N.}}
(\byear{2006}).
\btitle{A variation embedding theorem and applications}.
\bjournal{J. Funct. Anal.}
\bvolume{239}
\bpages{631--637}.
\bid{doi={10.1016/j.jfa.2005.12.021}, issn={0022-1236}, mr={2261341}}
\end{barticle}
\bptok{imsref}%
% NOT OUTPUTED:
%   issn = 0022-1236
%   url = http://dx.doi.org/10.1016/j.jfa.2005.12.021
%   number = 2
%   coden = JFUAAW
%   fjournal = Journal of Functional Analysis
\endbibitem

%b15 ###
%b15 #&#
\bibitem{FV07}
\begin{barticle}[mr]
\bauthor{\bsnm{Friz},~\bfnm{Peter}\binits{P.}} \AND
\bauthor{\bsnm{Victoir},~\bfnm{Nicolas}\binits{N.}}
(\byear{2010}).
\btitle{Differential equations driven by {G}aussian signals}.
\bjournal{Ann. Inst. Henri Poincar\'e Probab. Stat.}
\bvolume{46}
\bpages{369--413}.
\bid{doi={10.1214/09-AIHP202}, issn={0246-0203}, mr={2667703}}
\end{barticle}
\bptok{imsref}%
% NOT OUTPUTED:
%   issn = 0246-0203
%   url = http://dx.doi.org/10.1214/09-AIHP202
%   number = 2
%   fjournal = Annales de l'Institut Henri Poincar\'e Probabilit\'es et Statistiques
\endbibitem

%b16 ###
%b16 #&#
\bibitem{FVupdate}
\begin{barticle}[mr]
\bauthor{\bsnm{Friz},~\bfnm{Peter}\binits{P.}} \AND
\bauthor{\bsnm{Victoir},~\bfnm{Nicolas}\binits{N.}}
(\byear{2011}).
\btitle{A note on higher dimensional {$p$}-variation}.
\bjournal{Electron. J. Probab.}
\bvolume{16}
\bpages{1880--1899}.
\bid{doi={10.1214/EJP.v16-951}, issn={1083-6489}, mr={2842090}}
\end{barticle}
\bptok{imsref}%
% NOT OUTPUTED:
%   issn = 1083-6489
%   url = http://dx.doi.org/10.1214/EJP.v16-951
%   fjournal = Electronic Journal of Probability
\endbibitem

%b17 ###
%b17 #&#
\bibitem{FV}
\begin{bbook}[mr]
\bauthor{\bsnm{Friz},~\bfnm{Peter~K.}\binits{P.~K.}} \AND
\bauthor{\bsnm{Victoir},~\bfnm{Nicolas~B.}\binits{N.~B.}}
(\byear{2010}).
\btitle{Multidimensional Stochastic Processes as Rough Paths: Theory and Applications}.
\bseries{Cambridge Studies in Advanced Mathematics}
\bvolume{120}.
\bpublisher{Cambridge Univ. Press},
\blocation{Cambridge}.
\bid{mr={2604669}}
\end{bbook}
\bptok{imsref}%
% NOT OUTPUTED:
%   isbn = 978-0-521-87607-0
%   fpage = xiv+656
\endbibitem

%b18 ###
%b18 #&#
\bibitem{Gu}
\begin{barticle}[mr]
\bauthor{\bsnm{Gubinelli},~\bfnm{M.}\binits{M.}}
(\byear{2004}).
\btitle{Controlling rough paths}.
\bjournal{J. Funct. Anal.}
\bvolume{216}
\bpages{86--140}.
\bid{doi={10.1016/j.jfa.2004.01.002}, issn={0022-1236}, mr={2091358}}
\end{barticle}
\bptok{imsref}%
% NOT OUTPUTED:
%   issn = 0022-1236
%   url = http://dx.doi.org/10.1016/j.jfa.2004.01.002
%   number = 1
%   coden = JFUAAW
%   fjournal = Journal of Functional Analysis
\endbibitem

%b19 ###
%b19 #&#
\bibitem{Gu10}
\begin{barticle}[mr]
\bauthor{\bsnm{Gubinelli},~\bfnm{Massimiliano}\binits{M.}}
(\byear{2010}).
\btitle{Ramification of rough paths}.
\bjournal{J. Differential Equations}
\bvolume{248}
\bpages{693--721}.
\bid{doi={10.1016/j.jde.2009.11.015}, issn={0022-0396}, mr={2578445}}
\end{barticle}
\bptok{imsref}%
% NOT OUTPUTED:
%   issn = 0022-0396
%   url = http://dx.doi.org/10.1016/j.jde.2009.11.015
%   number = 4
%   coden = JDEQAK
%   fjournal = Journal of Differential Equations
\endbibitem

%b20 ###
%b20 #&#
\bibitem{MallHor}
\begin{barticle}[mr]
\bauthor{\bsnm{Hairer},~\bfnm{Martin}\binits{M.}}
(\byear{2011}).
\btitle{On {M}alliavin's proof of {H}\"ormander's theorem}.
\bjournal{Bull. Sci. Math.}
\bvolume{135}
\bpages{650--666}.
\bid{doi={10.1016/j.bulsci.2011.07.007}, issn={0007-4497}, mr={2838095}}
\end{barticle}
\bptok{imsref}%
% NOT OUTPUTED:
%   issn = 0007-4497
%   url = http://dx.doi.org/10.1016/j.bulsci.2011.07.007
%   number = 6-7
%   coden = BSMQA9
%   fjournal = Bulletin des Sciences Math\'ematiques
\endbibitem

%b21 ###
%b21 #&#
\bibitem{HM11}
\begin{barticle}[mr]
\bauthor{\bsnm{Hairer},~\bfnm{Martin}\binits{M.}} \AND
\bauthor{\bsnm{Mattingly},~\bfnm{Jonathan~C.}\binits{J.~C.}}
(\byear{2011}).
\btitle{A theory of hypoellipticity and unique ergodicity for semilinear stochastic {PDE}s}.
\bjournal{Electron. J. Probab.}
\bvolume{16}
\bpages{658--738}.
\bid{doi={10.1214/EJP.v16-875}, issn={1083-6489}, mr={2786645}}
\end{barticle}
\bptok{imsref}%
% NOT OUTPUTED:
%   issn = 1083-6489
%   url = http://dx.doi.org/10.1214/EJP.v16-875
%   fjournal = Electronic Journal of Probability
\endbibitem

%b22 ###
%b22 #&#
\bibitem{HP}
\begin{barticle}[mr]
\bauthor{\bsnm{Hairer},~\bfnm{M.}\binits{M.}} \AND
\bauthor{\bsnm{Pillai},~\bfnm{N.~S.}\binits{N.~S.}}
(\byear{2011}).
\btitle{Ergodicity of hypoelliptic {SDE}s driven by fractional {B}rownian motion}.
\bjournal{Ann. Inst. Henri Poincar\'e Probab. Stat.}
\bvolume{47}
\bpages{601--628}.
\bid{doi={10.1214/10-AIHP377}, issn={0246-0203}, mr={2814425}}
\end{barticle}
\bptok{imsref}%
% NOT OUTPUTED:
%   issn = 0246-0203
%   url = http://dx.doi.org/10.1214/10-AIHP377
%   number = 2
%   fjournal = Annales de l'Institut Henri Poincar\'e Probabilit\'es et Statistiques
\endbibitem

%b23 ###
%b23 #&#
\bibitem{H3}
\begin{barticle}[mr]
\bauthor{\bsnm{Hairer},~\bfnm{Martin}\binits{M.}} \AND
\bauthor{\bsnm{Pillai},~\bfnm{Natesh~S.}\binits{N.~S.}}
(\byear{2013}).
\btitle{Regularity of laws and ergodicity of hypoelliptic {SDE}s driven by rough paths}.
\bjournal{Ann. Probab.}
\bvolume{41}
\bpages{2544--2598}.
\bid{doi={10.1214/12-AOP777}, issn={0091-1798}, mr={3112925}}
\end{barticle}
\bptok{imsref}%
% NOT OUTPUTED:
%   issn = 0091-1798
%   url = http://dx.doi.org/10.1214/12-AOP777
%   number = 4
%   fjournal = The Annals of Probability
\endbibitem

%b24 ###
%b24 #&#
\bibitem{horm}
\begin{barticle}[mr]
\bauthor{\bsnm{H{\"o}rmander},~\bfnm{Lars}\binits{L.}}
(\byear{1967}).
\btitle{Hypoelliptic second order differential equations}.
\bjournal{Acta Math.}
\bvolume{119}
\bpages{147--171}.
\bid{issn={0001-5962}, mr={0222474}}
\end{barticle}
\bptok{imsref}%
% NOT OUTPUTED:
%   issn = 0001-5962
%   fjournal = Acta Mathematica
\endbibitem

%b25 ###
%b25 #&#
\bibitem{NH}
\begin{bincollection}[mr]
\bauthor{\bsnm{Hu},~\bfnm{Yaozhong}\binits{Y.}} \AND
\bauthor{\bsnm{Nualart},~\bfnm{David}\binits{D.}}
(\byear{2007}).
\btitle{Differential equations driven by {H}\"older continuous functions of order greater than $1/2$}.
In \bbooktitle{Stochastic Analysis and Applications}.
\bseries{Abel Symp.}
\bvolume{2}
\bpages{399--413}.
\bpublisher{Springer},
\blocation{Berlin}.
\bid{doi={10.1007/978-3-540-70847-6_17}, mr={2397797}}
\end{bincollection}
\bptok{imsref}%
% NOT OUTPUTED:
%   url = http://dx.doi.org/10.1007/978-3-540-70847-6_17
\endbibitem

%b26 ###
%b26 #&#
\bibitem{HT}
\begin{barticle}[mr]
\bauthor{\bsnm{Hu},~\bfnm{Yaozhong}\binits{Y.}} \AND
\bauthor{\bsnm{Tindel},~\bfnm{Samy}\binits{S.}}
(\byear{2013}).
\btitle{Smooth density for some nilpotent rough differential equations}.
\bjournal{J. Theoret. Probab.}
\bvolume{26}
\bpages{722--749}.
\bid{doi={10.1007/s10959-011-0388-x}, issn={0894-9840}, mr={3090548}}
\end{barticle}
\bptok{imsref}%
% NOT OUTPUTED:
%   issn = 0894-9840
%   url = http://dx.doi.org/10.1007/s10959-011-0388-x
%   number = 3
%   fjournal = Journal of Theoretical Probability
\endbibitem

%b27 ###
%b27 #&#
\bibitem{I}
\begin{bmisc}[auto:STB|2014/02/12|12:18:25]
\bauthor{\bsnm{Inahama},~\bfnm{Y.}\binits{Y.}}
(\byear{2011}).
\bhowpublished{Short time kernel asymptotics for Young SDE by means of
Watanabe distribution theory. Available at \arxivurl{arXiv:1110.2604}.}
\end{bmisc}
\bptok{imsref}%
% NOT OUTPUTED:
%   sortkey = Inahama(2011
\endbibitem

%b27 ###
%b27 #&#
\bibitem{Ina13}
\begin{bmisc}[auto]
\bauthor{\bsnm{Inahama},~\bfnm{Y.}\binits{Y.}}
(\byear{2013}).
\bhowpublished{Malliavin differentiability of solutions of rough
differential equations. Available at
\arxivurl{arXiv:1312.7621}.}
\end{bmisc}
\bptok{imsref}%
% NOT OUTPUTED:
%   sortkey = Inahama(2011
\endbibitem

%b28 ###
%b28 #&#
\bibitem{Li}
\begin{barticle}[mr]
\bauthor{\bsnm{Li},~\bfnm{Wenbo~V.}\binits{W.~V.}} \AND
\bauthor{\bsnm{Linde},~\bfnm{Werner}\binits{W.}}
(\byear{1998}).
\btitle{Existence of small ball constants for fractional {B}rownian motions}.
\bjournal{C. R. Acad. Sci. Paris S\'er. I Math.}
\bvolume{326}
\bpages{1329--1334}.
\bid{doi={10.1016/S0764-4442(98)80189-4}, issn={0764-4442}, mr={1649147}}
\end{barticle}
\bptok{imsref}%
% NOT OUTPUTED:
%   issn = 0764-4442
%   url = http://dx.doi.org/10.1016/S0764-4442(98)80189-4
%   number = 11
%   coden = CASMEI
%   fjournal = Comptes Rendus de l'Acad\'emie des Sciences. S\'erie I. Math\'ematique
\endbibitem

%b29 ###
%b29 #&#
\bibitem{L}
\begin{barticle}[mr]
\bauthor{\bsnm{Lyons},~\bfnm{Terry~J.}\binits{T.~J.}}
(\byear{1998}).
\btitle{Differential equations driven by rough signals}.
\bjournal{Rev. Mat. Iberoam.}
\bvolume{14}
\bpages{215--310}.
\bid{doi={10.4171/RMI/240}, issn={0213-2230}, mr={1654527}}
\end{barticle}
\bptok{imsref}%
% NOT OUTPUTED:
%   issn = 0213-2230
%   url = http://dx.doi.org/10.4171/RMI/240
%   number = 2
%   fjournal = Revista Matem\'atica Iberoamericana
\endbibitem

%b30 ###
%b30 #&#
\bibitem{LCL}
\begin{bbook}[mr]
\bauthor{\bsnm{Lyons},~\bfnm{Terry~J.}\binits{T.~J.}},
\bauthor{\bsnm{Caruana},~\bfnm{Michael}\binits{M.}} \AND
\bauthor{\bsnm{L{\'e}vy},~\bfnm{Thierry}\binits{T.}}
(\byear{2007}).
\btitle{Differential Equations Driven by Rough Paths}.
\bseries{Lecture Notes in Math.}
\bvolume{1908}.
\bpublisher{Springer},
\blocation{Berlin}.
%With an introduction concerning the Summer School by Jean Picard}.
\bid{mr={2314753}}
\end{bbook}
\bptok{imsref}%
% NOT OUTPUTED:
%   isbn = 978-3-540-71284-8; 3-540-71284-4
%   fpage = xviii+109
\endbibitem

%b31 ###
%b31 #&#
\bibitem{mall}
\begin{binproceedings}[mr]
\bauthor{\bsnm{Malliavin},~\bfnm{Paul}\binits{P.}}
(\byear{1978}).
\btitle{Stochastic calculus of variation and hypoelliptic operators}.
In \bbooktitle{Proceedings of the {I}nternational {S}ymposium on {S}tochastic {D}ifferential {E}quations ({R}es. {I}nst. {M}ath. {S}ci., {K}yoto {U}niv., {K}yoto, 1976)}
\bpages{195--263}.
\bpublisher{Wiley},
\blocation{New York}.
\bid{mr={0536013}}
\end{binproceedings}
\bptok{imsref}%
\endbibitem

%b32 ###
%b32 #&#
\bibitem{M}
\begin{barticle}[mr]
\bauthor{\bsnm{Molchan},~\bfnm{G.~M.}\binits{G.~M.}}
(\byear{1999}).
\btitle{Maximum of a fractional {B}rownian motion: Probabilities of small values}.
\bjournal{Comm. Math. Phys.}
\bvolume{205}
\bpages{97--111}.
\bid{doi={10.1007/s002200050669}, issn={0010-3616}, mr={1706900}}
\end{barticle}
\bptok{imsref}%
% NOT OUTPUTED:
%   issn = 0010-3616
%   url = http://dx.doi.org/10.1007/s002200050669
%   number = 1
%   coden = CMPHAY
%   fjournal = Communications in Mathematical Physics
\endbibitem

%b33 ###
%b33 #&#
\bibitem{monrad}
\begin{barticle}[mr]
\bauthor{\bsnm{Monrad},~\bfnm{Ditlev}\binits{D.}} \AND
\bauthor{\bsnm{Rootz{\'e}n},~\bfnm{Holger}\binits{H.}}
(\byear{1995}).
\btitle{Small values of {G}aussian processes and functional laws of the iterated logarithm}.
\bjournal{Probab. Theory Related Fields}
\bvolume{101}
\bpages{173--192}.
\bid{doi={10.1007/BF01375823}, issn={0178-8051}, mr={1318191}}
\end{barticle}
\bptok{imsref}%
% NOT OUTPUTED:
%   issn = 0178-8051
%   url = http://dx.doi.org/10.1007/BF01375823
%   number = 2
%   coden = PTRFEU
%   fjournal = Probability Theory and Related Fields
\endbibitem

%b34 ###
%b34 #&#
\bibitem{N}
\begin{bincollection}[mr]
\bauthor{\bsnm{Norris},~\bfnm{James}\binits{J.}}
(\byear{1986}).
\btitle{Simplified {M}alliavin calculus}.
In \bbooktitle{S\'eminaire de {P}robabilit\'es, {XX}, 1984/85}.
\bseries{Lecture Notes in Math.}
\bvolume{1204}
\bpages{101--130}.
\bpublisher{Springer},
\blocation{Berlin}.
\bid{doi={10.1007/BFb0075716}, mr={0942019}}
\end{bincollection}
\bptok{imsref}%
% NOT OUTPUTED:
%   url = http://dx.doi.org/10.1007/BFb0075716
\endbibitem

%b35 ###
%b35 #&#
\bibitem{nualart}
\begin{bbook}[mr]
\bauthor{\bsnm{Nualart},~\bfnm{David}\binits{D.}}
(\byear{2006}).
\btitle{The {M}alliavin Calculus and Related Topics},
\bedition{2nd} ed.
\bpublisher{Springer},
\blocation{Berlin}.
\bid{mr={2200233}}
\end{bbook}
\bptok{imsref}%
% NOT OUTPUTED:
%   isbn = 978-3-540-28328-7; 3-540-28328-5
%   fpage = xiv+382
\endbibitem

%b36 ###
%b36 #&#
\bibitem{sugita}
\begin{barticle}[mr]
\bauthor{\bsnm{Sugita},~\bfnm{Hiroshi}\binits{H.}}
(\byear{1985}).
\btitle{On a characterization of the Sobolev spaces over
an abstract Wiener space}.
\bjournal{J. Math. Kyoto Univ.}
\bvolume{25}
\bpages{717--725}.
\bid{mr={0810975}}
\end{barticle}
\bptok{imsref}%
% NOT OUTPUTED:
%   issn = 0023-608X
%   number = 1
%   coden = JMKYAZ
%   fjournal = Journal of Mathematics of Kyoto University
\endbibitem

%b37 ###
%b37 #&#
\bibitem{zhang}
\begin{bbook}[mr]
\beditor{\bsnm{Zhang},~\bfnm{Fuzhen}\binits{F.}}, ed.
(\byear{2005}).
\btitle{The {S}chur Complement and Its Applications}.
\bseries{Numerical Methods and Algorithms}
\bvolume{4}.
\bpublisher{Springer},
\blocation{New York}.
\bid{doi={10.1007/b105056}, mr={2160825}}
\end{bbook}
\bptok{imsref}%
% NOT OUTPUTED:
%   isbn = 0-387-24271-6
%   url = http://dx.doi.org/10.1007/b105056
%   fpage = xvi+295
\endbibitem

\end{thebibliography}
\end{document}